  \tikzset{
    >=latex,
    bnode/.style={circle,fill=black,draw=black},
    middlearrow/.style={	
	  decoration={markings, mark= at position 0.55 with {\arrow{#1}}},
	  postaction={decorate}
	  },
    double arrow/.style args={#1 colored by #2 and #3}{ 
	  -stealth,line width=#1,#2, 
	  postaction={draw,-stealth,#3,line width=(#1)/3,
                shorten <=(#1)/3,shorten >=2*(#1)/3}, 
	  }
	  
  }
\tikzset{
        hatch distance/.store in=\hatchdistance,
        hatch distance=10pt,
        hatch thickness/.store in=\hatchthickness,
        hatch thickness=2pt
    }
\pgfqpoint{\hatchdistance}{\hatchdistance}}
\pgfqpoint{\hatchdistance}{-\hatchdistance}}
		\newcommand{\RR}{\mathbb{R}}	
\renewcommand{\SS}{\mathbb{S}}		\newcommand{\TT}{\mathbb{T}}
		\newcommand{\ZZ}{\mathbb{Z}}	
\newcommand{\CA}{\mathcal{A}}			
\newcommand{\CC}{\mathcal{C}}			
\newcommand{\CG}{\mathcal{G}}
\newcommand{\BA}{\mathbf{A}}		\newcommand{\BB}{\mathbf{B}}
\newcommand{\BK}{\mathbf{K}}
		\newcommand{\BX}{\mathbf{X}}
\newcommand{\fka}{\mathfrak{a}}		\newcommand{\fkb}{\mathfrak{b}}
\newcommand{\note}[1]{ \marginpar{\footnotesize #1}}    
\newcommand{\ubar}[1]{\underaccent{\bar}{#1}} 		
\DeclareFontFamily{OMX}{MnSymbolE}{}	
\DeclareSymbolFont{MnLargeSymbols}{OMX}{MnSymbolE}{m}{n}
\DeclareFontShape{OMX}{MnSymbolE}{m}{n}{
    <-6>  MnSymbolE5    <6-7>  MnSymbolE6   <7-8>  MnSymbolE7
   <8-9>  MnSymbolE8   <9-10> MnSymbolE9  <10-12> MnSymbolE10
  <12->   MnSymbolE12	}{}
\DeclareFontShape{OMX}{MnSymbolE}{b}{n}{
    <-6>  MnSymbolE-Bold5   <6-7>  MnSymbolE-Bold6   <7-8>  MnSymbolE-Bold7
   <8-9>  MnSymbolE-Bold8   <9-10> MnSymbolE-Bold9  <10-12> MnSymbolE-Bold10
  <12->   MnSymbolE-Bold12  }{}
\let\llangle\@undefined
\let\rrangle\@undefined
\DeclareMathDelimiter{\llangle}{\mathopen}%
                     {MnLargeSymbols}{'164}{MnLargeSymbols}{'164}
\DeclareMathDelimiter{\rrangle}{\mathclose}%
                     {MnLargeSymbols}{'171}{MnLargeSymbols}{'171}
\DeclarePairedDelimiter\abs{\lvert}{\rvert}		
\DeclarePairedDelimiter\norm{\lVert}{\rVert}		
\DeclarePairedDelimiter\angles{\langle}{\rangle}	
\DeclarePairedDelimiter\aangles{\llangle}{\rrangle}	
\DeclarePairedDelimiter\paren{(}{)}			
\DeclarePairedDelimiter\braces{\{}{\}}			
\let\oldabs\abs	
\def\abs{\@ifstar{\oldabs}{\oldabs*}}			
\let\oldnorm\norm
\def\norm{\@ifstar{\oldnorm}{\oldnorm*}}
	\newcommand{\bigparen}[1]{\paren[\big]{#1}}
	\newcommand{\Bigparen}[1]{\paren[\Big]{#1}}
\newcommand{\bigbrace}[1]{\braces[\big]{#1}}	
\newcommand{\bigmid}{\mathrel{\big|}}			
\newcommand{\thhom}{\mathrel{\sim_\theta}}
\def\thgrp{\@ifnextchar[{\@thgrpwith}{\@thgrpwithout}} 
\def\@thgrpwith[#1]{\pi_{1,#1}}
\def\@thgrpwithout{\pi_{1,\theta}}
\def\thhom{\@ifnextchar[{\@thhomwith}{\@thhomwithout}} 
\def\@thhomwith[#1]{\mathrel{\sim_{#1}}}
\def\@thhomwithout{\mathrel{\sim_\theta}}
\newsavebox{\discrmapgraphic}  
\savebox{\discrmapgraphic}{
  \begin{tikzpicture}[scale=1, every node/.style={transform shape}]
      \path[clip] (-3.1 pt, -2.8 pt) rectangle (3.1 pt, 4 pt);
      \fill (0,0) circle (0.78 pt) (0,-0.6 pt) node[draw=black] {$\widehat{}$};
      \end{tikzpicture}
  }
\DeclareMathOperator{\lk}{{\rm Lk}} 			
\DeclareMathOperator{\vertex}{V}			
\DeclareMathOperator{\SO}{SO}
\DeclareMathOperator*{\Ast}{\scalebox{1.5}{\raisebox{-0.2ex}{$\ast$}}} 
\DeclareMathAlphabet{\mathbit}{OT1}{cmr}{bx}{it}  	
\theoremstyle{plain}
\newtheorem{thm}{Theorem}[section]		\newtheorem{theorem}[thm]{Theorem}		
\newtheorem{prop}[thm]{Proposition}		
\newtheorem{lem}[thm]{Lemma}						
\newtheorem{cor}[thm]{Corollary}		
\newtheorem{qu}[thm]{Question}			\newtheorem{question}[thm]{Question}
\newtheorem{fact}[thm]{Fact}
\newtheorem*{thm*}{Theorem}			\newtheorem*{theorem*}{Theorem}		
\newtheorem*{prop*}{Proposition}		\newtheorem*{proposition*}{Proposition}
\newtheorem*{lem*}{Lemma}			\newtheorem*{lemma*}{Lemma}			
\newtheorem*{cor*}{Corollary}			\newtheorem*{corollary*}{Corollary}
\newtheorem*{qu*}{Question}			\newtheorem*{question*}{Question}
\newtheorem*{conj*}{Conjecture}			\newtheorem*{conjecture*}{Question}
\newtheorem*{fact*}{Fact}
\newtheorem*{claim*}{Claim}
\newtheorem{alphthm}{Theorem}			
\newtheorem{alphprop}[alphthm]{Proposition}
\theoremstyle{definition}
\newtheorem{de}[thm]{Definition}			
		\newtheorem{convention}[thm]{Convention}
\newtheorem*{de*}{Definition}			\newtheorem{definition*}{Definition}	
\newtheorem*{notation*}{Notation}	
\newtheorem*{conv*}{Convention}			\newtheorem*{convention*}{Convention}
\theoremstyle{remark}
\newtheorem{rmk}[thm]{Remark}			\newtheorem{remark}[thm]{Remark}			
\newtheorem{exmp}[thm]{Example}
\theoremstyle{remark}
\newcommand{\hpl}{\hat{\mathfrak{h}}}
\newcommand{\hplv}{\hat{\mathfrak{v}}}
\newcommand{\hplt}{\hat{\mathfrak{g}}}
\newcommand{\hsp}{\mathfrak{h}}
\newcommand{\xcplx}{{\BX_{\Gamma_A,\Gamma_B}}} 
\newcommand{\xLcplx}{{\BX_{\Lambda_A,\Lambda_B}}}
\newcommand{\tildexcplx}{{\widetilde{\BX}_{\Gamma_A,\Gamma_B}}} 
\newcommand{\simp}{{\rm Simp}} 
\newcommand{\flag}{{\rm Flag}} 
\newcommand{\npc}{{\rm NPCCC}} 
\newcommand{\cc}{{\rm CC}} 
\newcommand{\loc}{{\rm loc}} 
\newcommand{\oldnote}[1]{\note{#1}}
\renewcommand{\oldnote}[1]{\iffalse... \fi} 
\title{A new construction of CAT(0) cube complexes}
\author{Robert Kropholler and Federico Vigolo}             
\thanks{F.V. was also funded by the EPSRC Grant 1502483; the J.T. Hamilton Scholarship; the Isaac Newton Institute for Mathematical Sciences, Cambridge (EPSRC grant
no EP/K032208/1), for support and hospitality during the programme “Non-positive curvature, group
actions and cohomology”; and the ISF Moked 713510 grant
number 2919/19.}
\begin{document}

	\begin{abstract}
		We introduce the notion of coupled link cube complex (CLCC) as a means of constructing interesting cocompactly cubulated groups. CLCCs are defined locally, making them a useful tool when precise control over the links is required. In this paper we study some general properties of CLCCs, such as their (co)homological dimension and criteria for hyperbolicity. Some examples of fundamental groups of CLCCs are RAAGs, RACGs, surface groups and some manifold groups. 
		As immediate applications of our criteria we produce a number of cubulated $3$ and $4$ manifolds with hyperbolic fundamental group.
	\end{abstract}
	
	\maketitle
	
	\tableofcontents
	
	\section{Introduction}
	\label{sec:introduction}
	
	The class of finitely presented groups is wild and full of mysterious objects\footnote{The large ``hic abundant leones'' region in Bridson's map of all groups.}, and studying it is one of the central tenets of geometric group theory. Despite its wildness, it is generally complicated to produce concrete examples of groups satisfying pathological properties because the groups we can actually describe and understand are, of course, relatively well-behaved. One of the consequences of this scarcity of examples is that a number of fundamental questions remain wide open and out of reach.
	
	A general strategy for constructing interesting examples is to restrict one's attention to a special class of groups where it is possible to employ some specialized tools. One such family is that of groups acting geometrically on CAT(0) cube complexes. There, it is possible to use both the CAT(0) geometry and the combinatorial data given by the hyperplanes in order to gain substantial knowledge and constraints on the algebraic structure of the acting group. Yet, it is still complicated to explicitly construct many pathological examples.
	
	Two hugely successful subclasses in the class of groups acting on CAT(0) cube complexes are right angled Artin groups (RAAGs) and right angled Coxeter groups (RACGs). Among other things, they have been used to construct groups with various finiteness properties \cite{bestvina_morse_1997}, exotic aspherical manifolds \cite{davis1983groups} and non\=/arithmetic hyperbolic groups of high cohomological dimension \cite{januszkiewicz_hyperbolic_2003,osajda_construction_2013}.\oldnote{More examples?}

	Part of the reason why the study of RAAGs and RACGs is so fruitful is that they are in natural correspondence with finite (flag) simplicial complexes. Namely, every simplicial complex $\Gamma$ uniquely determines a RAAG $\CA_\Gamma$ and a RACG $\CC_\Gamma$ and, vice versa, every RAAG and RACG is obtained this way. Moreover  the groups $\CA_\Gamma$ and $\CC_\Gamma$ act geometrically on nice CAT(0) cube complexes: the Salvetti and Davis complexes respectively. One particularly good feature of these cube complexes is that all the vertices have isomorphic links. In fact, every vertex in the Davis complex of $\CC_\Gamma$ has link isomorphic to $\Gamma$ itself, while every vertex in the Salvetti complex of $\CA_\Gamma$ has link isomorphic to the `octahedralisation' of $\Gamma$. This precise geometric understanding of the classifying spaces of RAAGs and RACGs can then be used to translate desirable geometric/algebraic properties into combinatorial properties of the defining complex $\Gamma$. Using this dictionary, it is possible to produce concrete examples of groups with prescribed properties by finding appropriate simplicial complexes.
	
	On the other hand, such precise geometric control comes at the cost of extra rigidity. This makes the classes of RAAGs and RACGs ill\=/suited to produce some examples of particularly pathological groups. The main objective of this work is to introduce a new and more flexible class of groups acting on CAT(0) cube complexes. This class contains both RAAGs and finite index subgroups of RACGs, and consists of groups that are defined very explicitly via an easy\=/to\=/describe classifying space.
	
	\
	
	In this paper we consider a construction that takes as input a pair of finite $n$\=/coloured flag simplicial complexes $\Gamma_A$ and $\Gamma_B$, and it outputs a finite non\=/positively curved cube complex $\xcplx$ (see Section~\ref{sec:complex_X_Gamma} for the complete definition). 
	We call $\xcplx$ a \emph{coupled link cube complex}, which shortens to \emph{CLCC}. When $\xcplx$ is connected, its universal cover $\tildexcplx$ is a CAT(0) cube complex, hence contractible. In particular, $\xcplx$ is a classifying space for its fundamental group. We can thus use topological and geometric properties of $\xcplx$ and $\tildexcplx$ to investigate the algebraic properties of $\pi_1(\xcplx)$.

	The most important feature of this construction is that it allows us to have complete control over links of the vertices in the cube complex. That is, the link at a vertex $v\in\xcplx$ can be expressed as a simplicial join 
	\begin{equation}
	\label{eq:intro}
	\lk\bigparen{v,\xcplx}=\lk\bigparen{\sigma_A(v),\Gamma_A}\ast\lk\bigparen{\sigma_B(v),\Gamma_B}  \tag{$\bigstar$}
	\end{equation}
	where $\sigma_A(v)$ and $\sigma_B(v)$ are two appropriate simplices in $\Gamma_A$ and $\Gamma_B$. As a sample application, note that it follows immediately from Gromov's link condition that $\xcplx$ is non\=/positively curved when $\Gamma_A$ and $\Gamma_B$ are flag simplicial complexes.
	
	The simplices $\sigma_A(v)$ and $\sigma_B(v)$ in \eqref{eq:intro} are determined using the $n$\=/colourings of $\Gamma_A$ and $\Gamma_B$.
	In fact, the vertices of $\xcplx$ are in a one\=/to\=/one correspondence with `pairs of simplices with complementary colours'. The simplices $\sigma_A$ and $\sigma_B$ are determined via this correspondence (this is explained in Section~\ref{sec:complex_X_Gamma}).

	Before continuing with the discussion of general properties of CLCCs, we wish to report that the class of fundamental groups of coupled link cube complexes is indeed fairly large. In fact, we prove:
	
	\begin{alphthm}\label{thm:intro.examples}
		The class of fundamental groups of connected non\=/positively curved CLCCs includes:
		\begin{itemize}
			\item every RAAG; 
			\item the commutator subgroup of every RACG;
			\item every orientable surface group;
			\item manifold and pseudo-manifold groups of arbitrarily high dimension. 
		\end{itemize}
	\end{alphthm}
	
	All the CLCCs needed to prove Theorem~\ref{thm:intro.examples} are defined in Section~\ref{sec:Examples}. More precisely, RAAGs and RACGs are obtained in Subsections~\ref{ssec:examples.RAAGs} and \ref{ssec:examples.RACGS}; surfaces in Subsection~\ref{ssec:examples.surfaces}; manifolds and pseudomanifolds in Subsections~\ref{ssec:examples.manifolds} and \ref{ssec:examples.pseudomanifolds}.
	
	\begin{rmk}
		The commutator subgroup of a RACG $\CC_\Gamma$ is a finite index subgroup $\CC_\Gamma$. In particular, every RACG is virtually the fundamental group of a CLCC. 
	\end{rmk}
	\begin{rmk}
		Surface groups and manifold groups are produced by constructing explicit cubulations of surfaces and PL manifold. In dimension greater than two, it is not clear what manifolds can be realised as CLCCs. A necessary condition is that the manifold is aspherical, but not much is known beyond that. It would be interesting to see what other properties are satisfied by these manifolds. For example, are they alway orientable?
	\end{rmk}

	\begin{rmk}
	 For any fixed dimension $n\geq 3$ we expect that CLCCs give rise to infinitely many non\=/isomorphic $n$\=/manifold groups. It is less clear whether they also give rise to infinitely many commensurability classes of such groups.
	\end{rmk}

	We begin our investigation of the geometry of CLCCs by showing that this construction is functorial from the category $\flag_n^2$ to $\npc$. Here, $\npc$ is the category with non\=/positively curved cube complexes as objects and cubical maps as morphisms, and $\flag_n$ is the category whose objects are $n$-coloured flag complexes and morphisms are simplicial maps that preserve the $n$-colouring. That is, in Theorem \ref{thm:functoriality.CAT0}, we prove the following:
	
	\begin{alphthm}\label{thm:intro.functoriality}
		A pair of colour\=/preserving maps of $n$\=/coloured flag simplicial complexes $f_A\colon\Gamma_A\to\Gamma_A'$ and $f_B\colon\Gamma_B\to\Gamma_B'$ induces a cubical map 
		\[
		\BX(f_A,f_B)\colon\xcplx\to\BX_{\Gamma_A',\Gamma_B'}.
		\]
		Furthermore, if $f_A$ and $f_B$ are inclusion of full subcomplexes, then $\BX(f_A, f_B)$ is a local isometry and hence induces an inclusion of fundamental groups.
	\end{alphthm}
	
	\begin{rmk}
		The CLCC construction can be applied also to simplicial complexes that are not flag. Theorem~\ref{thm:intro.functoriality} holds in this case as well with the only exception that $\BX(f_A, f_B)$ need not induce an inclusion of fundamental groups.
	\end{rmk}
	\begin{rmk}
		In the case that $\BX(f_A,f_B)$ is an inclusion of full subcomplexes, Theorem~\ref{thm:intro.functoriality} generalises the notion of parabolic subgroup of RAAGs and RACGs. 
	\end{rmk}
	
	Besides showing the naturality of the CLCC construction, Theorem~\ref{thm:intro.functoriality} is also helpful for studying the $\ZZ_2$\=/homology groups of $\xcplx$ in terms of the defining simplicial complexes. We prove the following (see Theorem~\ref{thm:chains.to.chains,cycles.to.cycles,amen} and Definition \ref{def:smartlypairedchains} for terminology used):
	
	\begin{alphprop}\label{prop:intro.homology.CLCC}
		If $\varOmega_A$ and $\varOmega_B$ are smartly paired non-zero cycles in $\Gamma_A$ and $\Gamma_B$ of dimension $d_A$ and $d_B$ respectively, then they define a non-zero $(d_A+d_B+2-n)$\=/cycle $\BX(\varOmega_A,\varOmega_B)$ in $\xcplx$.
	\end{alphprop}
	
	In the case that $\varOmega_A$ and $\varOmega_B$ are top dimensional, then the cycle $\BX(\varOmega_A,\varOmega_B)$ is non-trivial in $H_{d_A+d_B+2-n}(\xcplx; \ZZ_2)$. 
	This can be used to study the homological (and cohomological) dimension of CLCCs and their fundamental groups. One immediate consequence is the following.
	
	\begin{cor}
		If $\Gamma_A$ is $k$-dimensional with $H_k(\Gamma_A;\ZZ_2)\neq 0$ and $\Gamma_B$ is the simplicial join $\Ast_{i=1}^n B_i$, where every $B_i$ is a discrete set with at least $2$ elements, then $H_{k+1}\bigparen{\xcplx;\ZZ_2}\neq 0$.
	\end{cor}
	
	\
	
	The key advantage CLCCs have over RAAGs and RACGs when looking for exotic examples of groups is that CLCCs (and the links of their vertices) are not defined by a unique simplicial complex, but from a \emph{pair} of them. This can be very useful. For example, if one is looking for groups that are at the same time `nice' and `pathological', they can try to do it by encoding the `niceness' conditions in $\Gamma_A$ and the `pathological' conditions in $\Gamma_B$, so that the resulting cube complex $\xcplx$ enjoys both. Alternatively, one can also try to produce complexes $\Gamma_A$ and $\Gamma_B$ that are both `fairly nice' and `fairly pathological' and pair them in such a way that $\xcplx$ is `totally nice and pathological'.

	One instance where this approach of `spreading difficulties' among $\Gamma_A$ and $\Gamma_B$ proved fruitful is the theorem below (see Definition \ref{def:pairwise} for the notion of `pairwise 5-largeness').
	
	\begin{alphthm}\label{thm:intro.hyperbolic.CLCC}
		If $\Gamma_A, \Gamma_B$ are pairwise 5-large flag complexes, then the fundamental group of each connected component of $\xcplx$ is hyperbolic.
	\end{alphthm}
	
	We wish to remark that the notion of `pairwise 5-largeness' for a pair of flag complexes $\Gamma_A$ and $\Gamma_B$ does not require either of the two complexes to be $5$\=/large. 
	However, if either of the complexes $\Gamma_A$ or $\Gamma_B$ is 5-large, then the pair $\Gamma_A, \Gamma_B$ is pairwise 5-large. 
	As a consequence, this provides an alternative proof of the well\=/known fact that a right angled Coxeter group $\CC_\Gamma$ is hyperbolic if and only if $\Gamma$ is $5$\=/large .

	We obtain the following corollary from Theorems \ref{thm:4manifolds} and \ref{thm:3dimbarycentricsubdivision}.
	
	\begin{cor}
		Among connected non\=/positively curved CLCCs there are various cubulated $4$\=/manifolds with hyperbolic fundamental group.
	\end{cor}
	
	We show that in low dimensions there are no topological obstructions to a pair of simplicial complexes admitting a pairwise 5-large colouring. Namely, we prove the following in Theorems \ref{thm:barycentric.subdivision.hyperbolic} and \ref{thm:3dimbarycentricsubdivision}: 
	
	\begin{alphprop}
		Let $\Lambda_A, \Lambda_B$ be a pair of simplicial complexes of dimension $n\leq 3$. Then there exist simplicial complexes  $\Gamma_A$ and $\Gamma_B$ homeomorphic to $\Lambda_A$ and $\Lambda_B$ which have an $n+1$ colouring and are pairwise 5\=/large. 
	\end{alphprop}

	Together with Proposition~\ref{prop:intro.homology.CLCC} we obtain:
	
	\begin{cor}
		Connected non\=/positively curved CLCCs include a wealth of $4$\=/dimensional cube complexes whose fundamental groups are hyperbolic and of cohomological dimension $4$ (one such CLCC for every pair of $3$-dimensional flag simplicial complexes with non trivial third homology group. See Theorem~\ref{thm:3dimbarycentricsubdivision}).
	\end{cor}

	\begin{rmk}
		A rather different instance where the extra flexibility given by CLCCs paid off, is the work carried out by the first author in \cite{kropholler_hyperbolic}. There he uses the toolkit developed in this paper to produce various pathological groups. For example, he constructed hyperbolic groups having subgroups of type $FP_2$ that are not finitely presented.
	\end{rmk}
	
	Given the added flexibility of CLCCs it is worth asking several questions relating to proving new results and structural theorems for the class. 
	We outline some questions relating to the geometry at infinity here. 
	
	Firstly, in the class of RAAGs and RACGs, there are visual conditions for is 1-endedness. 
	RAAGs (RACGs resp.) are one ended if the defining graph is connected (has no separating clique resp.). 
	\begin{question}
		When is $\pi_1(\xcplx)$ 1-ended? 
	\end{question}

	Under the added assumption that the CLCC is CAT(0) (or hyperbolic), we can study its visual boundary. 
	The class of RACGs already provides interesting concrete examples of groups with Menger curve boundary \cite{genevievehaulmark}.
	It is also known \cite{januszkiewicz_hyperbolic_2003}, that higher dimensional spheres ($n>3$) do not appear as the boundary of hyperbolic RACGs. 
	Thus we end with the question of what boundaries occur in the class of CLCCs. 
	
	\begin{question}
	 What are the possible topological spaces that can appear as the CAT(0) or Gromov boundary of a CLCC? More Specifically,
		\begin{enumerate}
			\item Can we obtain higher dimensional Menger compacta?
			\item Can we obtain spheres of dimension $>3$ as Gromov boundaries?
			\item Can we obtain homology spheres of dimension $>3$ as Gromov boundaries?
	\end{enumerate} 
	\end{question}

	The second and third parts of the above questions relate to Questions~\ref{qu:hypmanifold} and \ref{qu:poincareduality}. 
	A deeper understanding of the Gromov boundary could also be used to differentiate CLCCs from RACGs and lattices in $\SO(n, 1)$. 
	At this time, we do not yet have explicit examples of hyperbolic CLCCs that are not commensurable to RACGs or lattices in $\SO(n, 1)$.
	However, we conjecture that such examples do exist. For instance, we suspect that Theorems~ \ref{thm:barycentric.subdivision.hyperbolic} and \ref{thm:3dimbarycentricsubdivision}  could be used to obtain such examples in low dimensions.

	\subsection{Organisation of the paper}
	In Section~\ref{sec:prelims} we give some notation and recall some facts about metric spaces and complexes. In Section~\ref{sec:complex_X_Gamma} we define CLCCs and we prove various general results about them, including Theorem \ref{thm:intro.functoriality} and Proposition~\ref{prop:intro.homology.CLCC} and a criterion for connectedness. In Section~\ref{sec:Examples} we give some examples to illustrate the flexibility of the CLCC construction.
	
	In Section \ref{sec:hyperbolic}, we study the combinatorics of hyperplanes in CLCCs and use these to show hyperbolicity in certain cases including Theorem \ref{thm:intro.hyperbolic.CLCC} and its corollaries.

	\subsection*{Acknowledgements} 
	It is our great pleasure to thank the anonymous referee for their thorough and helpful comments and for pointing us towards \cite{genevois}. Besides significantly improving the general exposition, their suggestions allowed us to provide a much more general and streamlined proof of the hyperbolicity criteria for CLCCs.
	
	Part of this work was undertaken at MSRI, Berkeley (during the program Geometric Group Theory), where research is supported by the National Science Foundation under Grant No. DMS-1440140. 
	It was also funded by the Deutsche Forschungsgemeinschaft (DFG, German Research Foundation) under Germany's Excellence Strategy EXC 2044 –390685587, Mathematics Münster: Dynamics–Geometry–Structure.

\section{Notation and preliminaries}
\label{sec:prelims}

The constructions in this paper will use the language of CAT(0) cube complexes.
We will now recall some well\=/known facts and definitions. Complete definitions and basic facts about CAT(0) geometry, can be found in \cite{bridson_metric_1999,sageev_cat0_2014}.

\subsection{Cube and simplicial complexes}\label{subsec:cubes}
A {\em cube complex} $X$ is a collection of cubes glued together via isometries of faces (for more details see \cite{sageev_cat0_2014}). A connected cube complex comes with a natural \emph{Euclidean metric} obtained by putting the Euclidean metric on each cube $c = [0, 1]^n$ and taking the resulting path metric. 
We will insist that the characteristic map of a cube $\chi_c\colon c\to X$ is injective. 
This can always be arranged by barycentric subdivision.

Similarly, a \emph{$\Delta$\=/complex} is a collection of simplices glued together via isometries of faces. We will once again insist that the characteristic map of a simplex $\chi_\sigma\colon \sigma\to X$ is injective. A $\Delta$\=/complex is \emph{simplicial} if every pair of simplices share at most one face. A simplex in a simplicial complex is uniquely determined by its vertices, we will therefore identify every simplex with its set of vertices $\sigma\leftrightarrow[z_0,\ldots,z_k]$. 

A map $f\colon X\to Y$ between cube complexes is a \emph{cubical map} if the image of every $n$\=/cube $c\subseteq X$ is a $k$\=/cube $c'\subseteq Y$ for some $k\leq n$ and the restriction of $f$ to $c$ is a projection $[0,1]^n\to [0,1]^k$. Similarly, a map between $\Delta$\=/complexes is \emph{simplicial} if it sends simplices to simplices (possibly collapsing them).

If $X$ is a cube complex and $c\subseteq X$ is a cube of dimension $k$, its {\em link} in $X$, denoted $\lk(c, X)$, is the $\Delta$\=/complex with an $(n-k-1)$-simplex for each $n$-cube containing $c$. These simplices are identified using the equivalence relation coming from inclusion of cubes. Namely, if $c$ is a face of $c'$, then the simplex corresponding to $c$ is a face of the simplex corresponding to $c'$.  Similarly, if $\sigma\subseteq\Lambda$ is a $k$\=/simplex in a $\Delta$\=/complex, its {\em link} $\lk(\sigma, \Lambda)$ is the $\Delta$\=/complex with an $(n-k-1)$-simplex for each $n$-simplex containing $\sigma$, with a similarly defined equivalence.

A $\Delta$\=/complex $\Lambda$ is {\em flag}, if it is simplicial and every set of pairwise adjacent vertices spans a simplex. 
Let $V\subsetneq V(\Lambda)$. 
Define the {\em full subcomplex} $\Lambda(V)$ to be the subcomplex of all simplices in $\Lambda$ whose vertices are in $V$. 
It follows that full subcomplexes of flag complexes are flag. 
If $\Lambda$ is a flag complex, then the link of a simplex $\sigma\subseteq\Lambda$ is equal to the full subcomplex of $\Lambda$ spanned by all vertices adjacent to the simplex i.e.
\[
\lk(\sigma,\Lambda)\coloneqq \Lambda \bigparen{ \{v\in\vertex(\Lambda) \bigmid v\notin \sigma, [v\cup\sigma] \mbox{ is a simplex of } \Lambda\} }.
\]

A cube complex is \emph{non\=/positively curved} if the link of every vertex is a flag complex. Equivalently, the universal cover supports a CAT(0) metric.

\subsection{Cohomological Dimension of Groups} 

We refer the reader to \cite{bieri_homological_1981, brown_cohomology_1982} for more details on (co)homology of groups. 

The {\em cohomological dimension} of a discrete group $G$ (denoted $cd(G)$), is the projective dimension of $\ZZ$ considered as the trivial $\ZZ G$ module. Equivalently, the cohomological dimension can be defined as 
\[
cd(G) = \max\bigbrace{n\bigmid H^n(G; M)\neq 0\mbox{ for some } \ZZ G \mbox{-module } M}. 
\] 
There are similar definitions for {\em homological dimension} (denoted $hd(G)$), where projective modules are replaced by flat modules and cohomology groups are replaced with homology groups. 

Since projective modules are flat, it is always the case that $hd(G)\leq cd(G)$. Thus, to show that a group has cohomological dimension $\geq n$ one just needs to find some $\ZZ G$-module $M$ such that $H_n(G; M)\neq 0$. For us, it will be convenient to use $\ZZ_2$\=/coefficients to find such homology groups. 
We will be interested in the case that $G$ is the fundamental group of a compact non-positively curved cube complex. 
In this case, $G$ is of type $FP$ and thus $hd(G) = cd(G)$ see \cite[pg.204, Ex. 6]{brown_cohomology_1982}

If a finite cube complex $X$ is non\=/positively curved then it is a classifying space for its fundamental group $G\coloneqq\pi_1(X)$. It follows that the homology group $H_n(G;\ZZ_2)$ is equal to $H_n(X;\ZZ_2)$. 
To show that fundamental groups of non\=/positively curved cube complexes have high cohomological dimension it is therefore enough to prove that said complexes have non\=/trivial high dimensional $\ZZ_2$\=/homology groups.

\begin{rmk}
For torsion free groups $cd(G)$ is invariant under commensurability while groups that contain elements of finite order always have infinite cohomological dimension. If a group $G$ is virtually torsion free, it is then convenient to define the {\em virtual cohomological dimension} of $G$ ($vcd(G)$) as the cohomological dimension of any finite index torsion free subgroup. By the above remark $vcd(G)$ equals $cd(G)$ when $G$ is torsion-free.
\end{rmk}

\subsection{Homology of cube complexes}
Computing the $\ZZ_2$\=/homology groups of a cube complex is a relatively simple task using cellular homology. Let $X$ be a cube complex. As a topological space, $X$ is naturally a CW complex. That is, $X$ can be constructed inductively: the $0$\=/skeleton $X^{(0)}$ is the set of vertices and the $n$\=/skeleton $X^{(n)}$ is obtained by gluing all the $n$\=/cubes to the $(n-1)$\=/skeleton (to be precise, when doing this construction we are implicitly choosing an orientation on each cube of $X$. This will not play any role in the current discussion as we are only working with $\ZZ_2$\=/coefficients). 

The space of \emph{$n$\=/chains} with $\ZZ_2$\=/coefficients is the $\ZZ_2$\=/module $C_n(X;\ZZ_2)$ of finite formal sums $\sum_c \alpha_c c$ where $c$ ranges among the $n$\=/cubes of $X$ and $\alpha_c\in \ZZ_2$.

For general CW complexes, the differential $d_n\colon C_n(X;\ZZ_2)\to C_{n-1}(X;\ZZ_2)$ is defined in terms of degrees of attaching maps. Since we are only considering cube complexes and $\ZZ_2$, we can express the differential by linearly extending the map given by
\[
 d_n(c)\coloneqq \smashoperator[r]{\sum_{e\text{ face of }c}} e
\]
where $c$ is an $n$\=/cube of $X$ and $e$ is an $(n-1)$\=/dimensional face of it.
We say that an $n$-chain $c$ is an {\em $n$-cycle}, if $d_n(c) = 0$. 

It is well known that $d_{n-1}\circ d_n=0$ and that the homology of the complex
\[
\begin{tikzcd}
  \cdots \arrow{r} & C_n \arrow{r}{d_n} & C_{n-1}\arrow{r}{d_{n-1}} 
      &C_{n-2}\arrow{r}{d_{n-2}} &\cdots
\end{tikzcd}
\]
is isomorphic to the singular homology of $X$ (see \emph{e.g.} \cite{hatcher_algebraic_2002}, Theorem 2.35).

Throughout the paper we will actually be working with reduced homology. That is, the homology of the augmented chain complex
\[
\begin{tikzcd}
  \cdots \arrow{r} & C_1 \arrow{r}{d_1} & C_{0}\arrow{r}{d_{0}} 
      &C_{-1}\cong \ZZ_2 
\end{tikzcd}
\]
where the augmentation map $d_{0}$ sends every $0$\=/cube (vertex) $v$ to $1\in \ZZ_2$. 
Such an augmentation can also be formalized by declaring that there exists a unique empty cube in $X$, and this empty cube gives rise to the group $C_{-1}$. 

Note that the discussion also above applies to $\Delta$\=/complexes.

\

Sometimes, chains and cycles can be recognised locally. 
Let $X$ be a cube complex and fix some $k\leq n\leq\dim(X)$. For every $k$\=/cube $e$ in $X$ we have a \emph{localisation map} $\loc_e\colon C_n(X;\ZZ_2)\to C_{n-k-1}\bigparen{\lk(e,X);\ZZ_2}$. Specifically, since each $(n-k-1)$\=/simplex $\sigma$ in $\lk(e,X)$ is given by an $n$\=/cube $c(\sigma)$ of $X$ containing $e$, it makes sense to define
\[
 \loc_e\Bigparen{\sum\alpha_c c}\coloneqq \sum_{\sigma\in \lk(e,X)}\alpha_{c(\sigma)} \sigma.
\]
In the case $k=n$, the link is going to be empty, and therefore the only non\=/trivial module in the chain complex of $\lk(c,X)$ is $C_{-1}$. When this happens, the localisation map $\loc_c$ will send the chain $\sum \alpha_c c$ to $\sum\alpha_c\in\ZZ_2\cong C_{-1}$.

We can define a localisation map $\loc_\tau\colon C_n(\Lambda;\ZZ_2)\to C_{n-k-1}\bigparen{\lk(\tau,\Lambda);\ZZ_2}$ for $k$\=/simplices $\tau$ in a $\Delta$\=/complex $\Lambda$ just as we did for cubes in cube complexes, and we have the following:

\begin{lem}\label{lem:n.chains.recognised.locally}
 Fix some $k<n$. Then an $n$\=/chain $\varSigma$ in a cube complex $X$ (resp. in a $\Delta$\=/complex $\Lambda$) is a cycle if and only if $\loc_e\paren{\varSigma}$ (resp. $\loc_\tau\paren{\varSigma}$) is a cycle for every $k$\=/cube $e\subsetneq X$ (resp. $\tau$ is a $k$-simplex of $\Lambda$). 
 
 Moreover, if $\varSigma$ is a boundary then so is $\loc_e\paren{\varSigma}$ for every $k$\=/cube $e$.
\end{lem}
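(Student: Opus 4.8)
The plan is to reduce everything to a purely local computation with the differential, exploiting the fact that both the cube-complex differential and the localisation maps are defined by "summing faces". Throughout I work with $\ZZ_2$-coefficients, so signs are irrelevant.

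First I would establish the key commutation identity: for a $k$-cube $e$ (resp. $k$-simplex $\tau$) and an $n$-chain $\varSigma$, one has
\[
\loc_e\bigparen{d_n\varSigma}=d_{n-k-1}\bigparen{\loc_e\varSigma}
\]
up to reindexing of the chain complexes. The point is that an $(n-k-2)$-simplex in $\lk(e,X)$ corresponds to an $(n-1)$-cube $c'$ containing $e$, and the multiplicity with which such a $c'$ appears in $\loc_e(d_n\varSigma)$ counts pairs (an $n$-cube $c$ of $\varSigma$ with $c'$ a face of $c$, and $c$ contains $e$); this is exactly the multiplicity with which the corresponding simplex appears in $d_{n-k-1}(\loc_e\varSigma)$, since the faces of the simplex $\sigma=c(\cdot)$ in $\lk(e,X)$ are precisely the links in $c$ of the codimension-one faces of $c$ through $e$. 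I expect this bookkeeping — matching faces of cubes through $e$ with faces of simplices in the link, with correct multiplicities under gluings — to be the main obstacle; it is not deep, but it must be done carefully because of the "with multiplicity" clauses in the definitions of link and differential. Once this identity is in hand, the "moreover" statement is immediate: if $\varSigma=d_{n+1}\varTheta$ then $\loc_e\varSigma=\loc_e d_{n+1}\varTheta=d_{n-k}\loc_e\varTheta$ is a boundary. (Here I would note that $\loc_e$ is a chain map, so in particular it is defined on $(n+1)$-chains as well, landing in $C_{n-k}\bigparen{\lk(e,X);\ZZ_2}$; the same face-counting argument applies.)

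For the equivalence, one direction is again immediate from the commutation identity: if $d_n\varSigma=0$ then $\loc_e(d_n\varSigma)=0$, hence $d_{n-k-1}(\loc_e\varSigma)=0$, so every localisation is a cycle. For the converse, suppose $\loc_e\varSigma$ is a cycle for every $k$-cube $e$; I want to conclude $d_n\varSigma=0$. The chain $d_n\varSigma$ is an $(n-1)$-chain, and since $k<n$ we have $k\leq n-1$, so it suffices to show that $d_n\varSigma$ has zero coefficient on every $(n-1)$-cube $c'$. Now $\loc_{e}$ applied to an $(n-1)$-chain, where $e$ is an $(n-1)$-cube, reads off exactly the coefficient of $e$ (this is the degenerate case of the localisation map, with $\lk(e,X)$ landing in dimension $-1$, as discussed in the excerpt). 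So the natural move is: pick any $(n-1)$-cube $c'$ and a $k$-face $e$ of $c'$; then by the commutation identity, the coefficient of $c'$ in $d_n\varSigma$ equals the coefficient of the corresponding $0$-dimensional simplex $[c']$ in $d_{n-k-1}\bigparen{\loc_e\varSigma}$ when $n-k-1=0$, i.e. when $k=n-1$; and this vanishes by hypothesis.

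This handles $k=n-1$ cleanly, so to finish I would simply observe that the statement for a single value $k<n$ follows once it is known for $k=n-1$, \emph{or} argue the general $k$ directly: for arbitrary $k<n$, the commutation identity says $d_{n-k-1}(\loc_e\varSigma)=\loc_e(d_n\varSigma)$ for all $k$-cubes $e$, and I claim the vanishing of all these left-hand sides forces $d_n\varSigma=0$. Indeed $d_n\varSigma$ is an $(n-1)$-chain and $n-1\geq k$, so by the already-established $(n-1)$-level statement applied to the \emph{cycle-free} criterion — more directly, by noting $\loc_{e'}$ for $(n-1)$-cubes $e'\supseteq e$ reads off coefficients — a cube $c'$ of $d_n\varSigma$ with nonzero coefficient would localise (via a $k$-face $e$ of $c'$) to a nonzero vertex of $d_{n-k-1}(\loc_e\varSigma)=\loc_e(d_n\varSigma)$, contradiction. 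The $\Delta$-complex case is verbatim the same argument with "cube" replaced by "simplex" and $[0,1]^n\to[0,1]^k$ projections replaced by face inclusions; no change is needed. The only genuine content, to reiterate, is the face-counting lemma $\loc_e\circ d=d\circ\loc_e$, and that is where I would spend the care.
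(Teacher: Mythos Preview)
Your proposal is correct and follows essentially the same strategy as the paper: establish that $\loc_e$ commutes with the differential, deduce the ``only if'' direction and the ``moreover'' clause immediately, and for the ``if'' direction argue that the family of maps $\loc_e$ (ranging over $k$-cubes $e$) is jointly injective on $(n-1)$-chains by picking, for any $(n-1)$-cube $c'$ with nonzero coefficient in $d_n\varSigma$, a $k$-face $e\subseteq c'$ and observing that the simplex of $\lk(e,X)$ corresponding to $c'$ receives that same nonzero coefficient in $\loc_e(d_n\varSigma)$.

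Two minor points. First, your detour through the special case $k=n-1$ is unnecessary and your remark that ``the statement for a single value $k<n$ follows once it is known for $k=n-1$'' is not justified (knowing all $(n-1)$-localisations are cycles does not obviously follow from knowing all $k$-localisations are cycles for some smaller $k$); fortunately your direct argument for general $k$ is the right one and matches the paper exactly. Second, in that direct argument you write ``nonzero vertex'' where you mean ``nonzero $(n-k-2)$-simplex''; this is only a vertex when $k=n-2$.
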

\begin{proof}
 We prove the lemma for cube complexes, as the same proof works for $\Delta$\=/complexes.
 It is immediate to check that we have the equality $d_{n-k-1}\bigparen{\loc_e(\varSigma)}=\loc_e\bigparen{d_n(\varSigma)}$. It is hence clear that $\loc_e$ sends cycles to cycles and boundaries to boundaries. It only remains to show that if $d_{n-k-1}\bigparen{\loc_e(\varSigma)}=0$ for every $k$\=/cube $e\in X$, then $d_n(\varSigma)=0$. 
 
 As $d_{n-k-1}\bigparen{\loc_e(\varSigma)}=\loc_e\bigparen{d_n(\varSigma)}$, it will be enough to show that if $\varOmega\in C_{n-1}(X;\ZZ_2)$ is a non trivial $(n-1)$\=/chain then there exists a $k$\=/cube $e$ such that $\loc_e(\varOmega)$ is not trivial. This is readily done: let $\varOmega=\sum \alpha_c c$ and choose $c$ so that $\alpha_c$ is not $0$. If $k=n-1$, by choosing $e=c$ we immediately deduce that $\loc_e(\varOmega)=\alpha_c\neq 0$. Otherwise, fix any $k$\=/cube $e\subsetneq c$: the cube $c$ will identify an $(n-k-1)$\=/simplex $\sigma$ in $\lk(e,X)$ with coefficient $\alpha_c$ in $\loc_e(\varOmega)$. In particular, $\loc_e(\varOmega)$ is not trivial.
\end{proof}

\begin{rmk}\label{rmk.to.recognise.chains.locally.few.simplices.suffice}
 For the proof of the fact that if all the localisations of an $n$-chain $\varSigma$ are cycles then $\varSigma$ is a cycle as well, we did not really need to know that $\loc_e(\varSigma)$ is a cycle for \emph{every} $k$\=/cube $e$. Indeed, it is actually enough to know that for every $(n-1)$\=/cube $c$ there exists a $k$\=/cube $e\subseteq c$ such that $\loc_e(\varSigma)$ is a cycle.
\end{rmk}

\subsection{Simplicial joins and homology.} The {\em simplicial join} $\Lambda\ast \Delta$ of two simplicial complexes is the simplicial complex with vertex set $\Lambda^{(0)}\sqcup \Delta^{(0)}$. The vertices $z_0, \dots, z_n$ span a simplex if and only if the sets $Z_\Lambda = \bigbrace{z_i \bigmid z_i\in \Lambda^{(0)}}$ and $Z_\Delta = \bigbrace{z_i \bigmid z_i\in \Delta^{(0)}}$ span simplices in $\Lambda$ and $\Delta$ respectively (the join of $\Lambda$ with an empty complex is equal to $\Lambda$ itself). 

The simplices of $\Lambda\ast \Delta$ are given by the joins of (possibly empty) simplices in $\Lambda$ with (possibly empty) simplices in $\Delta$. Therefore, given a $k$\=/chain $\varSigma=\sum\alpha_\sigma\sigma$ in $\Lambda$ and an $l$\=/chain $\varOmega=\sum\beta_\tau\tau$ in $\Delta$, we can define a $(k+l+1)$\=/chain $\varSigma\ast\varOmega$ in $\Lambda\ast \Delta$ by summing over all the joins of all simplices in $\varSigma$ with all simplices in $\varOmega$. That is, we set
\[
 \varSigma\ast\varOmega\coloneqq \sum_{\sigma,\tau}\alpha_\sigma\beta_\tau \paren{\sigma\ast\tau}.
\]
Note that the above expression is bilinear, and hence it gives us a linear map $C_k(\Lambda;\ZZ_2)\otimes C_l(\Delta;\ZZ_2)\to C_{k+l+1}(\Lambda\ast \Delta;\ZZ_2)$. 

In the case that $k = -1$ we have a unique $-1$ simplex $\sigma$ of $\Lambda$. 
Thus given an $l$-simplex $\tau$ of $\Delta$ we define $\sigma\ast \tau$ to be $\tau$. Thus the linear map $C_{-1}(\Lambda;\ZZ_2)\otimes C_l(\Delta;\ZZ_2)\to C_{l}(\Lambda\ast \Delta;\ZZ_2)$ is the map induced by the inclusion $\Delta\to \Lambda\ast\Delta$. 

\begin{lem}\label{lem:joinsofcycles}
  Let $\varSigma$ be a non\=/trivial $k$\=/chain in $\Lambda$ and $\varOmega$ a non\=/trivial $l$\=/chain in $\Delta$. Then $\varSigma\ast \varOmega$ is a $(k+l+1)$\=/cycle in $\Lambda\ast \Delta$ if and only if $\varSigma$ and $\varOmega$ are cycles as well. 
\end{lem}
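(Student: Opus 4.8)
The plan is to reduce everything to a clean algebraic identity relating the differential on $\Lambda\ast\Delta$ to the differentials on $\Lambda$ and $\Delta$, and then read off both implications from it. First I would record the boundary formula for a join of simplices. If $\sigma=[x_0,\dots,x_k]$ is a simplex of $\Lambda$ and $\tau=[y_0,\dots,y_l]$ is a simplex of $\Delta$, then the faces of $\sigma\ast\tau=[x_0,\dots,x_k,y_0,\dots,y_l]$ are exactly the simplices $\sigma'\ast\tau$ with $\sigma'$ a codimension-one face of $\sigma$, together with the simplices $\sigma\ast\tau'$ with $\tau'$ a codimension-one face of $\tau$ (and, because we work with reduced homology and allow empty simplices, the degenerate cases $k=0$ or $l=0$ are handled uniformly: the ``missing'' face $\varnothing\ast\tau=\tau$, resp.\ $\sigma\ast\varnothing=\sigma$, appears). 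Hence over $\ZZ_2$ one gets
\[
 d_{k+l+1}(\sigma\ast\tau)=\bigparen{d_k\sigma}\ast\tau+\sigma\ast\bigparen{d_l\tau},
\]
with the convention that a term is dropped (or interpreted as the empty-join face) when one factor drops to dimension $-1$. Extending bilinearly, for a $k$-chain $\varSigma$ in $\Lambda$ and an $l$-chain $\varOmega$ in $\Delta$ this gives the key identity
\[
 d_{k+l+1}\bigparen{\varSigma\ast\varOmega}=\bigparen{d_k\varSigma}\ast\varOmega+\varSigma\ast\bigparen{d_l\varOmega}.
\]

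Granting this identity, one direction is immediate: if $\varSigma$ and $\varOmega$ are cycles, then both terms on the right vanish, so $\varSigma\ast\varOmega$ is a cycle. For the converse, suppose $\varSigma\ast\varOmega$ is a cycle, so $\bigparen{d_k\varSigma}\ast\varOmega=-\varSigma\ast\bigparen{d_l\varOmega}$, i.e.\ these two equal over $\ZZ_2$. The point is that the two sides live in ``transverse'' parts of the chain module of $\Lambda\ast\Delta$: a join $\sigma'\ast\tau$ appearing in the first sum has its $\Lambda$-part $\sigma'$ of dimension $k-1$ and its $\Delta$-part of dimension $l$, while a join $\sigma\ast\tau'$ in the second sum has $\Lambda$-part of dimension $k$ and $\Delta$-part of dimension $l-1$. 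As long as $k\ge 1$ and $l\ge 1$ these bidegrees are distinct, so both sides must individually be zero. Now I invoke the fact (a special case of the injectivity argument already used in the proof of Lemma~\ref{lem:n.chains.recognised.locally}, or simply the definition of the bilinear map on chains) that the join map $C_{k-1}(\Lambda;\ZZ_2)\otimes C_l(\Delta;\ZZ_2)\to C_{k+l}(\Lambda\ast\Delta;\ZZ_2)$ is injective, because distinct pairs $(\sigma',\tau)$ give distinct simplices $\sigma'\ast\tau$; since $\varOmega$ is non-trivial, $\bigparen{d_k\varSigma}\ast\varOmega=0$ forces $d_k\varSigma=0$, and symmetrically $d_l\varOmega=0$. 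It remains to treat the low-dimensional corner cases $k=0$ or $l=0$: if, say, $l=0$ then $\varOmega$ is a non-trivial $0$-chain with $d_0\varOmega\in\ZZ_2\cong C_{-1}$, and the identity reads $d_{k+1}(\varSigma\ast\varOmega)=(d_k\varSigma)\ast\varOmega+(d_0\varOmega)\,\varSigma$; here $(d_k\varSigma)\ast\varOmega$ lies in chains whose $\Delta$-part is a genuine vertex while $(d_0\varOmega)\varSigma$ lies in $C_k(\Lambda\ast\Delta)$ supported on $\Lambda$ (the empty-join part), so again the two are supported on disjoint sets of simplices and vanish separately, giving $d_k\varSigma=0$ and $d_0\varOmega=0$; the case $k=0$ is symmetric, and the case $k=l=0$ is an even more elementary instance of the same bookkeeping.

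The only real content is the boundary identity for joins together with the observation that its two right-hand terms are supported on disjoint collections of simplices of $\Lambda\ast\Delta$; the rest is bilinearity and the injectivity of the join map on chains, and the main thing to be careful about is the uniform treatment of faces that drop a factor to dimension $-1$ under reduced homology (which is exactly what makes the statement come out cleanly for $k=0$ or $l=0$ as well). I expect the bookkeeping of these degenerate faces to be the one place where the argument needs to be written out carefully rather than waved through.
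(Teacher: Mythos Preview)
Your proposal is correct and follows essentially the same approach as the paper: derive the Leibniz-type identity $d(\varSigma\ast\varOmega)=(d\varSigma)\ast\varOmega+\varSigma\ast(d\varOmega)$, observe that the two summands are supported on simplices of distinct $(\Lambda,\Delta)$-bidegrees and hence vanish separately, then use non-triviality of one factor to force the differential of the other to vanish. The only cosmetic difference is that the paper handles the cases $k=0$ and $l=0$ uniformly via the reduced-homology convention (treating the empty simplex as the unique $(-1)$-simplex), whereas you spell these out as separate corner cases; both amount to the same bookkeeping.
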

\begin{proof}
  The boundary of a simplex $\sigma\ast \tau$ is $(d_k \sigma\ast \tau) + (\sigma\ast d_l \tau)$. Since $\ast$ is bilinear on the space of chains, it is easy to check that $d_{k+l+1}\paren{\varSigma\ast\varOmega}=\paren{d_{k}\varSigma\ast \varOmega}+\paren{\varSigma\ast d_{l}\varOmega}$. It follows that if $\varSigma$ and $\varOmega$ are cycles, then $\varSigma\ast\varOmega$ is a cycle as well.
  
  Vice versa, if $\paren{d_{k}\varSigma\ast \varOmega}+\paren{\varSigma\ast d_{l}\varOmega}=0$ then both $d_{k}\varSigma\ast \varOmega$ and $\varSigma\ast d_{l}\varOmega$ must be $0$ because their supports are disjoint (on one side we have joins of $(k-1)$\=/simplices with $l$\=/simplices, on the other joins of $k$\=/simplices with $(l-1)$\=/simplices).
  
  We now have $d_{k}\varSigma\ast \varOmega=0$. Let $\varOmega=\sum\beta_\tau\tau$; since $\varOmega$ is not trivial, there exists a simplex $\tau$ so that $\beta_\tau\neq 0$. We deduce that $d_k\varSigma\ast\tau=0$ and hence $d_k\varSigma=0$. A symmetric argument implies that $d_l\varOmega=0$ as well.
\end{proof}

\section{Coupled link cube complexes}
\label{sec:complex_X_Gamma}

\subsection{Notation}
Throughout the paper we will use the following convention:
\begin{convention}
 A graph is \emph{$n$\=/coloured} if it is $n$\=/partite and there is a fixed $1$\=/to\=/$1$ correspondence between the partition sets and $\{1,\ldots,n\}$. In particular, every $n$\=/partite graph can be $n$\=/coloured in $n!$ different way respecting the $n$\=/partite structure. A simplicial complex is $n$\=/partite or $n$\=/coloured if its $1$\=/skeleton is. A priori, we do not insist that all the partitioning sets be non empty.
\end{convention}

Let $A_1,\ldots,A_n$ and $B_1,\ldots,B_n$ be (possibly empty) finite sets. Then the joins $\BA\coloneqq A_1\ast\cdots*A_n$ and $\BB\coloneqq B_1*\cdots *B_n$ are $n$\=/coloured simplicial complexes of dimension (at most) $n-1$. We will denote their simplices with fraktur letters. We say that a simplex $\fka\subseteq \BA$ (resp. $\fkb\subseteq\BB$) has a \emph{vertex on the $i$\=/th coordinate} if one of its vertices is in $A_i$ (resp $B_i$). Two simplices $\fka\subseteq \BA$ and $\fkb\subseteq \BB$ are \emph{complementary} if for every $i=1,\ldots,n$ exactly one between $\fka$ and $\fkb$ has a vertex on the $i$\=/th coordinate. Given a simplex $\fka\subseteq\BA$ with a vertex $a_i$ on the $i$\=/th coordinate, we will denote by $\fka_i$ the subsimplex obtained removing $a_i$. Conversely, if $\fka$ does not have a vertex on the $i$\=/th coordinate we denote by $\fka^i$ a simplex obtained by adding to $\fka$ an element $a_i\in A_i$ as a vertex: $\fka^i=\fka*a_i$. 
Note that $\fka^i$ is not uniquely determined by $\fka$ as it also depends on the choice of $a_i$. In particular, $(\fka_i)^i$ might be different from $(\fka^i)_i$ (the latter is always equal to $\fka$). On the contrary, when adding or removing vertices on different coordinates the result does not depend on the order of such operations; we will thus omit the parentheses and write $\fka_i^j\coloneqq(\fka^j)_i=(\fka_i)^j$, $\fka_{ij}\coloneqq(\fka_j)_i=(\fka_i)_j$ (see Figure~\ref{fig:notation.simplices}).

\begin{figure}
    \centering
    \includegraphics{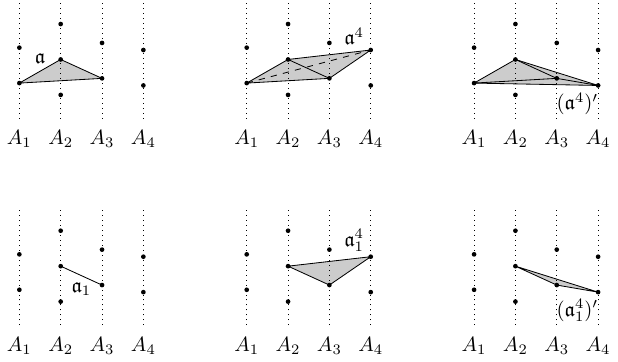}
    \caption{Simplices obtained adding and/or removing a coordinate.}
    \label{fig:notation.simplices}
\end{figure}

Note that if $(\fka,\fkb)$ is a pair of complementary simplices, then so are $(\fka_i,\fkb^i)$ and $(\fka^j,\fkb_j)$.

\

For every $i=1,\ldots,n$ the join $A_i*B_i$ is a complete bipartite graph. The Cartesian product 
\[
\BK\coloneqq (A_1*B_1)\times\cdots\times(A_n*B_n)
\]
is an cube complex of dimension $\leq n$. The set of vertices of $\BK$ is the product $(A_1\sqcup B_1)\times \cdots \times (A_n\sqcup B_n)$ \emph{i.e.} every vertex $v\in\BK$ is given by a choice of $n$ coordinates $v=(x_1,\ldots,x_n)$ where the $i$\=/th coordinate $x_i$ is either an element of $A_i$ (\emph{A-coordinate}) or of $B_i$ (\emph{B-coordinate}). The sets of $A$\=/coordinates and $B$\=/coordinates determine two complementary simplices of $\BA$ and $\BB$. This yields a 1\=/to\=/1 correspondence between the vertices of $\BK$ and the pairs of complementary simplices $(\fka,\fkb)$ in $\BA\times\BB$; we will therefore denote the vertices of $\BK$ \emph{via} their \emph{coordinate simplices} $(\fka,\fkb)$.

Using the above conventions, a vertex $(\fka,\fkb)\in\BK$ is joined by an edge of $\BK$ with exactly the vertices of the form $(\fka_i,\fkb^i)$ or $(\fka^i,\fkb_i)$ for some $i=1,\ldots,n$. We say that such an edge is an \emph{edge in the $i$\=/th coordinate}. 
Note that two edges meeting at a vertex $(\fka,\fkb)\in\BK$ lie in a common square of $\BK$ if and only if they are edges in different coordinates $i\neq j$. When this is the case, we denote by $(\fka',\fkb')$ the fourth vertex of such square. The vertex $(\fka',\fkb')$ is the unique vertex obtained from $(\fka,\fkb)$ by changing both the $i$\=/th and the $j$\=/th coordinate accordingly. Moreover, the square containing both $(\fka,\fkb)$ and $(\fka',\fkb')$ is unique.

In general, the $k$\=/cubes of $\BK$ are the products of $k$ edges in $k$ different coordinates. Moreover, given any vertex $(\fka,\fkb)\in\BK$ and another vertex $(\fka',\fkb')$ satisfying the following equality
\[
(\fka',\fkb')=\left(\fka_{i_{h+1},\ldots,i_k}^{i_1,\ldots,i_h},\fkb_{i_1,\ldots,i_h}^{i_{h+1},\ldots,i_k}\right) 
\text{   with }h\leq k \text{ and }i_s\neq i_t\ \forall 1\leq s< t\leq k,
\]
that is $(\fka', \fkb')$ differs from $(\fka,\fkb)$ in $k$ coordinates. 
Then there exists a unique $k$\=/cube of $\BK$ containing the vertices $(\fka,\fkb)$ and $(\fka',\fkb')$. 

If in the above equation we have $h = 0$, that is $\fka'=\fka_{i_1,\ldots,i_k}$ (and hence $\fkb'=\fkb^{i_1,\ldots,i_k}$), then the $k$\=/cube containing $(\fka,\fkb)$ and $(\fka',\fkb')$ is uniquely determined by $\fka$ and $\fkb'$ because its vertices are those whose coordinate simplices are subsimplices of $\fka$ and $\fkb'$. In this case we say that $(\fka,\fkb)$ and $(\fka',\fkb')$ are \emph{extremal} vertices and we denote the $k$\=/cube containing them by $Q(\fka,\fkb')$. Every cube in $\BK$ has a unique pair of extremal vertices and therefore it can be uniquely expressed as $Q(\fka,\fkb)$ for the appropriate $\fka\subseteq\BA$ and $\fkb\subseteq\BB$. 
If $\fka$ and $\fkb$ share $k$ colours, then $Q(\fka,\fkb)$ is a $k$-cube of $\BK$. 

It can be convenient to think of the edges of $\BK$ as oriented. We use the convention that edges point in the direction of the vertex with fewer $A$\=/coordinates \emph{i.e.} an edge is oriented from $(\fka,\fkb)$ to $(\fka_i,\fkb^i)$. The extremal vertices of a $k$\=/cube of $\BK$ are the initial and terminal vertices of all oriented paths of length $k$ in its $1$\=/skeleton (see Figure~\ref{fig:orientation_edges_k_cubes}).

\begin{figure}
    \centering
    \begin{subfigure}{0.3 \textwidth}
	\includegraphics{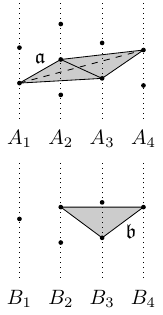}     
    \end{subfigure}
    \begin{subfigure}{0.6 \textwidth}
	\includegraphics{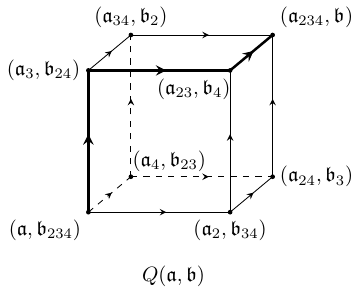}     
    \end{subfigure}
    \caption{A $3$\=/cube determined by $4$\=/coloured simplices overlapping in the coordinates $2,3$ and $4$. The edges are oriented towards vertices with fewer $A$\=/coordinates and a path between extremal vertices is highlighted.}
    \label{fig:orientation_edges_k_cubes}
\end{figure}

\subsection{Construction of coupled link cube complexes}
Let $\Gamma_A$ and $\Gamma_B$ be two $n$\=/coloured simplicial complexes and let $A_1,\ldots,A_n$ and $B_1,\ldots,B_n$ be the partitioning sets (where the $i$\=/th partitioning set is the one corresponding to the `colour' $i$). 
Thus we have inclusion of subcomplexes, $\Gamma_A\subseteq A_1\ast\dots\ast A_n$ and $\Gamma_B\subseteq B_1\ast\dots\ast B_n$.

\begin{de}
 The \emph{coupled link cube complex (CLCC)} associated to $\Gamma_A$ and $\Gamma_B$ is the full subcomplex $\xcplx\subseteq\BK$ whose vertices are the vertices of $\BK$ with (possibly empty) coordinate simplices in $\Gamma_A$ and $\Gamma_B$:
\[
{\rm V}\bigparen{\xcplx}=\bigbrace{(\fka,\fkb)\in\BK\bigmid \fka\subseteq\Gamma_A,\ \fkb\subseteq\Gamma_B}.
\]
\end{de}

If $\xcplx$ contains two extremal vertices $(\fka,\fkb_{i_1,\ldots,i_k})$ and $(\fka_{i_1,\ldots,i_k},\fkb)$, then $\xcplx$ also contains the $k$\=/cube $Q(\fka,\fkb)$. Indeed, the coordinate simplices of the vertices of $Q(\fka,\fkb)$ are contained in $\fka$ and $\fkb$ and are therefore in $\Gamma_A$ and $\Gamma_B$. In particular, every oriented path of length $k$ in the one skeleton of $\xcplx$ lies on the boundary of a (unique) $k$\=/cube of $\xcplx$. 
We see from this description that $\xcplx$ is equal to the union of $Q(\fka, \fkb)$ as $\fka$ (resp. $\fkb$) ranges over (possibly empty) simplices of $\Gamma_A$ (resp. $\Gamma_B$). 

\begin{rmk}
The complex $\xcplx$ is non\=/empty if and only if there exists a pair of complementary simplices in the pair $\Gamma_A, \Gamma_B$. In particular, we must have $\dim(\Gamma_A)+\dim(\Gamma_B)\geq n-2$. In general, we have:
\[
\dim\bigparen{\xcplx}\leq \dim(\Gamma_A)+\dim(\Gamma_B)+2-n 
\]
(with the convention $\dim(\emptyset)=-1$).
\end{rmk}

It will be useful for what follows to define $\lk(\emptyset, \Gamma) = \Gamma$. 

The motivating reason for defining coupled link cube complexes is the following:

\begin{lem}\label{lem:links.in.xcplx}
The link of a cube $Q(\fka,\fkb)$ in $\xcplx$ is the join of the links of $\fka\subseteq \Gamma_A$ and $\fkb\subseteq \Gamma_B$:
\[
\lk\bigparen{Q(\fka,\fkb),\xcplx}=\lk(\fka,\Gamma_A)*\lk(\fkb,\Gamma_B).
\]
\end{lem}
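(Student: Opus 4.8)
The plan is to identify both sides of the claimed equality as $\Delta$-complexes with the all-right spherical metric by giving an explicit bijection between their simplices that respects the face relations, and then to invoke the remark that the all-right spherical metric on a simplicial join coincides with the spherical join metric. Recall that the cube $Q(\fka,\fkb)$ has the two extremal vertices $(\fka,\fkb_{i_1,\ldots,i_k})$ and $(\fka_{i_1,\ldots,i_k},\fkb)$, where $i_1,\ldots,i_k$ are exactly the coordinates on which both $\fka$ and $\fkb$ have a vertex; write $m=\dim Q(\fka,\fkb)=k$. A simplex of $\lk\bigparen{Q(\fka,\fkb),\xcplx}$ is, by definition, an $(N-m-1)$-simplex for each $N$-cube of $\xcplx$ containing $Q(\fka,\fkb)$ (with multiplicity). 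So the first step is to describe precisely which cubes of $\xcplx$ contain $Q(\fka,\fkb)$ as a face.

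First I would work out, using the combinatorial bookkeeping of the preceding subsection, that a cube $C$ of $\BK$ contains $Q(\fka,\fkb)$ iff $C$ can be written as $Q(\fka^{S_A}_{\phantom{A}},\fkb^{S_B})$ — more carefully, $C = Q(\tilde\fka,\tilde\fkb)$ where $\tilde\fka$ is obtained from $\fka$ by possibly adding vertices on coordinates where $\fka$ currently has none (equivalently, where $\fkb$ currently has one), and symmetrically for $\tilde\fkb$; the two index sets of added coordinates must be disjoint from each other (since on each coordinate exactly one of the two simplices carries a vertex). Concretely: on a coordinate $i\notin\{i_1,\ldots,i_k\}$, exactly one of $\fka,\fkb$ has a vertex there, and that coordinate contributes a dimension to $C$ iff we ``grow'' the other one by a vertex in that $A_i$ or $B_i$; on a coordinate $i_s$ both already have a vertex and nothing can be grown. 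For such a grown pair $(\tilde\fka,\tilde\fkb)$ to give a cube of $\xcplx$ we need $\tilde\fka\in\Gamma_A$ and $\tilde\fkb\in\Gamma_B$, i.e. the added $A$-vertices together with $\fka$ span a simplex of $\Gamma_A$ and likewise for $B$. This is exactly the statement that the added $A$-vertices span a simplex of $\lk(\fka,\Gamma_A)$ (using the flag description $\lk(\fka,\Gamma_A)=\Gamma_A(\{v\mid[v\cup\fka]\subseteq\Gamma_A\})$, or just the definition of link of a simplex) and the added $B$-vertices span a simplex of $\lk(\fkb,\Gamma_B)$. Moreover the number of added $A$-vertices plus the number of added $B$-vertices equals $N-m$, where $N=\dim C$; so the simplex of the link contributed by $C$ has dimension $(N-m-1)=(p+1)+(q+1)-1$ where $p,q$ are the dimensions of the simplices of $\lk(\fka,\Gamma_A)$ and $\lk(\fkb,\Gamma_B)$ spanned by the added vertices. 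This is exactly the dimension count for the join $\lk(\fka,\Gamma_A)\ast\lk(\fkb,\Gamma_B)$.

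The bijection is then: the simplex of $\lk\bigparen{Q(\fka,\fkb),\xcplx}$ coming from the cube grown by the $A$-simplex $\rho\in\lk(\fka,\Gamma_A)$ and the $B$-simplex $\tau\in\lk(\fkb,\Gamma_B)$ corresponds to $\rho\ast\tau$ in the simplicial join. I would check this is well defined and bijective (every cube of $\xcplx$ containing $Q(\fka,\fkb)$ arises this way from a unique pair $(\rho,\tau)$, because the extremal-vertex description makes $Q(\tilde\fka,\tilde\fkb)$ unique, and because cubes are taken with multiplicity in the link this remains a bijection even when $\Gamma_A,\Gamma_B$ are disconnected), and that it is compatible with faces: a face of $C=Q(\tilde\fka,\tilde\fkb)$ still containing $Q(\fka,\fkb)$ corresponds to shrinking $\rho$ or $\tau$ to a face, which is exactly the face relation in $\rho\ast\tau$. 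Hence the two $\Delta$-complexes are simplicially isomorphic, and since both carry the all-right spherical metric the isomorphism is an isometry; finally the remark in the excerpt says the all-right spherical metric on $\lk(\fka,\Gamma_A)\ast\lk(\fkb,\Gamma_B)$ as a simplicial join agrees with the spherical-join metric (with distance $\pi$ between different components in the disconnected case), which is the asserted statement.

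The main obstacle is purely bookkeeping: carefully verifying that ``containing $Q(\fka,\fkb)$ as a face'' translates exactly into ``growing $\fka$ and $\fkb$ by disjoint sets of vertices lying in the respective links'', and keeping the multiplicities straight when $\Gamma_A$ or $\Gamma_B$ is disconnected so that the link is genuinely the simplicial join (including the $\pi$-distance convention) rather than something smaller. Once the cube-counting is pinned down, the compatibility with faces and the identification of metrics are routine.
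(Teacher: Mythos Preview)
Your proposal is correct and follows essentially the same approach as the paper: both arguments set up a bijection between cubes of $\xcplx$ containing $Q(\fka,\fkb)$ and pairs $(\rho,\tau)$ of simplices in $\lk(\fka,\Gamma_A)\times\lk(\fkb,\Gamma_B)$, then check this respects the face relations. The only organisational difference is that the paper builds the isomorphism incrementally (vertices, then edges via a three\=/case analysis, then higher cliques), whereas you describe all containing cubes $Q(\tilde\fka,\tilde\fkb)$ in one go; your worry about multiplicities is unnecessary here since $\xcplx$ is a subcomplex of the simple cube complex $\BK$, but it does no harm.
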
 

\begin{proof}
 Let $k=\dim(Q(\fka,\fkb))$. Then a vertex in the link of $Q(\fka,\fkb)$ is a cube $C\subseteq \xcplx$ with $\dim(C)=k+1$ and $Q(\fka,\fkb)\subsetneq C$. Such a cube must be of the form $Q(\fka^j,\fkb)$ or $Q(\fka,\fkb^j)$. Any $\fka^j$ determines a vertex in $\lk(\fka,\Gamma_A)$ and any $\fkb^j$ determines a vertex in $\lk(\fkb,\Gamma_B)$. This yields a bijection between the vertices of the link of $Q(\fka,\fkb)$ to the union $\rm{V}\bigparen{\lk(\fka,\Gamma_A)}\sqcup\rm{V}\bigparen{\lk(\fkb,\Gamma_B)}$, the latter being precisely the set of vertices of $\lk(\fka,\Gamma_A)*\lk(\fkb,\Gamma_B)$. We need to show that this bijection induces an isomorphism of simplicial complexes.

Let $C,C'$ be two vertices in $\lk\bigparen{Q(\fka,\fkb),\xcplx}$. They are joined by an edge if and only if there exists a $(k+2)$\=/dimensional cube $C''\subseteq\xcplx$ with $C\cup C'\subsetneq C''$. There are three possible cases:
\begin{enumerate}[a)]
\item $C=Q(\fka^j,\fkb)$ and $C'=Q(\fka^k,\fkb)$,
\item $C=Q(\fka^j,\fkb)$ and $C'=Q(\fka,\fkb^k)$,
\item $C=Q(\fka,\fkb^j)$ and $C'=Q(\fka,\fkb^k)$.
\end{enumerate}
In all cases, it follows from our general discussion that such a $C''$ must take the form of $Q(\fka^{jk},\fkb)$, $Q(\fka^j,\fkb^k)$ or $Q(\fka,\fkb^{jk})$ respectively. Hence $C''$ exists if and only if the appropriate coordinate simplices exist (in particular we must have $j\neq k$). 

In the first case, it follows that there is such a $C''$ if and only if the appropriate simplex $\fka^{jk}$ exists, and this is equivalent to saying that $\fka^j$ and $\fka^k$ are joined by an edge in $\lk(\fka,\Gamma_A)$. The third case is analogous. The second case is even simpler: since $j$ and $k$ are different the cube $Q(\fka^j,\fkb^k)$ always exists, and hence we conclude, by the definition of join, that each vertex in $\lk(\fka,\Gamma_A)$ is linked to each vertex in $\lk(\fkb,\Gamma_B)$ by an edge.

We thus proved that the bijection between vertices extend to an isomorphism between the $1$\=/skeleta. All that remains to show is that a complete clique of $d+1$ vertices in $\lk\bigparen{Q(\fka,\fkb),\xcplx}$ is filled by a $d\=/simplex$ if and only if the corresponding simplex is in $\lk(\fka,\Gamma_A)*\lk(\fkb,\Gamma_B)$.

Let $Q(\fka^{i_0},\fkb),\ldots,Q(\fka^{i_l},\fkb),Q(\fka,\fkb^{i_{l+1}})\ldots,Q(\fka,\fkb^{i_d})$ be such a clique. Since we have edges between all those vertices, the indices $i_j$ must be all different. As above, we deduce that there exists the appropriate $d$\=/simplex in $\lk\bigparen{Q(\fka,\fkb),\xcplx}$ if and only if the simplices $\fka^{i_0\cdots i_l}$ and $\fkb^{i_{l+1}\cdots i_d}$ are in $\Gamma_A$ and $\Gamma_B$, and hence if and only if there exists the appropriate $d$\=/simplex in $\lk(\fka,\Gamma_A)*\lk(\fkb,\Gamma_B)$.
\end{proof}

Gromov's characterisation of non\=/positively curved cube complexes immediately implies the following.

\begin{cor}\label{cor:flag.implies.cat0.CLCC}
If the simplicial complexes $\Gamma_A$ and $\Gamma_B$ are flag then $\xcplx$ is a (possibly empty) non\=/positively curved cube complex.
\end{cor}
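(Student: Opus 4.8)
The plan is to apply Gromov's link condition (the Lemma attributed to Gromov in the Preliminaries) directly, using Lemma~\ref{lem:links.in.xcplx} to identify the links of vertices. First I would recall that by Gromov's criterion, a finite cube complex is non-positively curved if and only if the link of every vertex is a flag simplicial complex. Since $\xcplx$ is a full subcomplex of the finite cube complex $\BK$, it is itself a finite cube complex, so it suffices to check that the link of every vertex of $\xcplx$ is flag.

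Next I would take an arbitrary vertex $v\in\xcplx$. By definition $v$ corresponds to a pair of complementary coordinate simplices $(\fka,\fkb)$ with $\fka\in\Gamma_A$ and $\fkb\in\Gamma_B$; as a $0$-cube this is $Q(\fka,\fkb)$. Applying Lemma~\ref{lem:links.in.xcplx} with the cube $Q(\fka,\fkb)$ of dimension $0$, we get
\[
\lk\bigparen{v,\xcplx}=\lk(\fka,\Gamma_A)\ast\lk(\fkb,\Gamma_B).
\]
Now I would invoke the facts recalled in Section~\ref{subsec:cubes}: since $\Gamma_A$ and $\Gamma_B$ are flag, the link of any simplex in a flag complex is again flag (it equals the full subcomplex spanned by the adjacent vertices, and full subcomplexes of flag complexes are flag). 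Hence $\lk(\fka,\Gamma_A)$ and $\lk(\fkb,\Gamma_B)$ are both flag.

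The only remaining point is that the simplicial join of two flag simplicial complexes is again flag. This is a standard fact, but I would include a one-line justification: a set of pairwise-adjacent vertices in $\Lambda\ast\Delta$ splits as a set $Z_\Lambda$ of pairwise-adjacent vertices in $\Lambda$ together with a set $Z_\Delta$ of pairwise-adjacent vertices in $\Delta$ (any vertex of $\Lambda$ is adjacent to any vertex of $\Delta$ in the join, so the only real constraint is within each factor); since $\Lambda$ and $\Delta$ are flag, $Z_\Lambda$ spans a simplex $\sigma\in\Lambda$ and $Z_\Delta$ spans a simplex $\tau\in\Delta$, and then $\sigma\ast\tau$ is a simplex of $\Lambda\ast\Delta$ with the prescribed vertex set. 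Therefore $\lk(v,\xcplx)$ is flag for every vertex $v$, and Gromov's criterion gives that $\xcplx$ is non-positively curved.

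There is essentially no serious obstacle here: the content has been front-loaded into Lemma~\ref{lem:links.in.xcplx}, and the proof is a routine assembly of that lemma with Gromov's criterion and the elementary stability of flagness under taking links and joins. The only place requiring a modicum of care is making sure $\xcplx$ is genuinely a cube complex to which Gromov's finite-complex criterion applies — but this is immediate since it is defined as a full subcomplex of the finite cube complex $\BK$ and hence inherits finiteness and the cube-complex structure — and noting the degenerate cases (one of $\Gamma_A,\Gamma_B$ empty, or $\xcplx$ empty) where the statement is vacuous or trivial.
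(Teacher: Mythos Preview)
Your proof is correct and follows exactly the route the paper intends: the paper does not even spell out a proof, merely stating that the corollary is an immediate consequence of Gromov's link criterion together with Lemma~\ref{lem:links.in.xcplx}. Your argument makes explicit precisely the steps (links in flag complexes are flag, joins of flag complexes are flag) that the paper leaves implicit.
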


The converse of the above corollary is not true. For example, let $\Gamma_A$ be obtained from $\{a_+,a_-\}\ast\{a_+,a_-\}\ast\{a_+,a_-\}$ by removing the simplex $[a_-,a_-,a_-]$ and let $\Gamma_B$ have a single vertex per coordinate and no edges. Then $\xcplx$ is a connected graph (and therefore non\=/positively curved) but $\Gamma_A$ is not flag. 

Recall that a cube (simplicial) complex has \emph{pure dimension} $d$ if it has dimension $d$ and every cube (simplex) is a face of a $d$\=/dimensional cube (simplex). We have the following:

\begin{lem}\label{lem:dimension.xcplx.smartly.paired}
 If $\Gamma_A$ and $\Gamma_B$ are $n$\=/coloured simplicial complexes of pure dimension $k_A$ and $k_B$ respectively and $\xcplx$ is not empty, then it is a cube complex of pure dimension $k_A+k_B+2-n$.
\end{lem}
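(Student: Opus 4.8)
The plan is to prove the two inequalities $\dim\bigparen{\xcplx}\leq k_A+k_B+2-n$ and ``every cube is a face of a cube of dimension $k_A+k_B+2-n$'' separately. First I would recall, from the discussion preceding the construction of $\xcplx$, that a cube $Q(\fka,\fkb)$ of $\BK$ has dimension $\abs{\supp\fka\cap\supp\fkb}$ and that for such a cube necessarily $\supp\fka\cup\supp\fkb=\{1,\ldots,n\}$ (this is exactly the condition guaranteeing that its extremal vertices are honest vertices of $\BK$); since $\fka,\fkb$ are simplices of $n$\=/coloured complexes we have $\abs{\supp\fka}=\dim\fka+1$ and $\abs{\supp\fkb}=\dim\fkb+1$, whence
\[
\dim Q(\fka,\fkb)=\dim\fka+\dim\fkb+2-n .
\]
The upper bound is then immediate from the remark asserting $\dim\bigparen{\xcplx}\leq\dim(\Gamma_A)+\dim(\Gamma_B)+2-n$, since purity gives $\dim(\Gamma_A)=k_A$ and $\dim(\Gamma_B)=k_B$. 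So the whole content is the extension statement.

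For that, I would take an arbitrary cube $C$ of $\xcplx$ and write it as $C=Q(\fka,\fkb)$ using its unique pair of extremal vertices. Because the maximal\=/$A$ and maximal\=/$B$ extremal vertices of $C$ lie in $\xcplx$, we get $\fka\in\Gamma_A$ and $\fkb\in\Gamma_B$, and $\supp\fka\cup\supp\fkb=\{1,\ldots,n\}$ since $Q(\fka,\fkb)$ is a genuine cube. Now invoke purity: $\fka$ is a face of a $k_A$\=/simplex $\widehat\fka\in\Gamma_A$ and $\fkb$ is a face of a $k_B$\=/simplex $\widehat\fkb\in\Gamma_B$. Then $\supp\widehat\fka\cup\supp\widehat\fkb\supseteq\supp\fka\cup\supp\fkb=\{1,\ldots,n\}$, so $Q(\widehat\fka,\widehat\fkb)$ is a well\=/defined cube of $\BK$; it lies in $\xcplx$ because $\widehat\fka\in\Gamma_A$ and $\widehat\fkb\in\Gamma_B$; by the displayed formula it has dimension $(k_A+1)+(k_B+1)-n=k_A+k_B+2-n$; and it contains $C=Q(\fka,\fkb)$ as a face because $\fka\subseteq\widehat\fka$ and $\fkb\subseteq\widehat\fkb$. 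The last point is the only slightly fiddly verification: writing both cubes as products over $i$ of cells of $A_i*B_i$ inside $\BK=(A_1*B_1)\times\cdots\times(A_n*B_n)$, the $i$\=/th factor of $Q(\fka,\fkb)$ is a face of the $i$\=/th factor of $Q(\widehat\fka,\widehat\fkb)$ for every $i$ (an endpoint of an edge, or the same cell), and a product of faces is a face; here it is essential that each $A_i*B_i$ is complete bipartite, so that no compatibility between distinct coordinates is required. Combined with the upper bound, this proves purity of dimension $k_A+k_B+2-n$ as soon as $\xcplx\neq\emptyset$.

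The role of the ``smartly paired'' hypothesis in the plan is precisely to secure non\=/emptiness (and hence that the claimed formula describes a genuine non\=/negative dimension): assuming $\Gamma_A\neq\emptyset$, a maximal simplex $\fka\in\Gamma_A$ has $\dim\fka=k_A$ by purity, and by smart pairing it has a complementary simplex $\fkb\in\Gamma_B$, so $(\fka,\fkb)$ is a vertex of $\xcplx$; moreover $\abs{\supp\fkb}=n-(k_A+1)$ while $\abs{\supp\fkb}\leq k_B+1$ forces $k_A+k_B+2-n\geq 0$ (and if $\Gamma_A=\emptyset$ then smart pairing forces $\Gamma_B=\emptyset$ as well, a degenerate case). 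I do not anticipate any real obstacle: once the cube\=/dimension formula and the face criterion ``$\fka\subseteq\fka'$, $\fkb\subseteq\fkb'$ $\Rightarrow$ $Q(\fka,\fkb)$ is a face of $Q(\fka',\fkb')$'' are in hand, the argument is routine bookkeeping in the coordinate\=/simplex formalism already set up, the mild care needed being only in that face criterion.
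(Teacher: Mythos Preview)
Your proposal is correct and follows essentially the same approach as the paper: use the earlier remark for the upper bound, then extend the simplices $\fka\subseteq\widehat\fka$ and $\fkb\subseteq\widehat\fkb$ using pure-dimensionality to produce a top-dimensional cube $Q(\widehat\fka,\widehat\fkb)$ containing $Q(\fka,\fkb)$. The paper's proof is terser (it does not spell out the face criterion or the dimension formula for $Q(\fka,\fkb)$), and its accompanying remark agrees with your observation that smart pairing is used only to guarantee $\xcplx\neq\emptyset$.
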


\begin{proof}
 We already remarked that $\dim(\xcplx)$ is at most $d=k_A+k_B+2-n$. Let $Q(\fka,\fkb)$ be any cube of $\xcplx$. 
 Since $\Gamma_A$ has pure dimension $k_A$, there exists a $k_A$\=/dimensional simplex $\bar\fka\subseteq\Gamma_A$ containing $\fka$ and similarly there exists a $k_B$\=/dimensional simplex $\bar\fkb\subseteq\Gamma_B$ containing $\fkb$. It follows that $\bar\fka$ and $\bar\fkb$ overlap in $d$ coordinates and hence $Q(\bar\fka,\bar\fkb)$ is a $d$\=/dimensional cube of $\xcplx$ containing $Q(\fka,\fkb)$.
\end{proof}

We conclude this subsection remarking that, in general, the simplicial complexes $\Gamma_A$ and $\Gamma_B$ might contain some `junk' simplices that do not contribute in any way to the construction of the complex $\xcplx$ because they do not have complementary simplices. 

\begin{de}
 Let $\Gamma_A$, $\Gamma_B$ be $n$\=/coloured simplicial complexes. A simplex in $\fka\subseteq\Gamma_A$ (resp. $\fkb\subseteq\Gamma_B$) is \emph{junk} if it is not contained into any simplex $\fka'$ admitting a complementary simplex in $\Gamma_B$ (resp. $\fkb'$ with complementary in $\Gamma_A$). The complexes $\Gamma_A$ and $\Gamma_B$ are \emph{smartly paired} if they contain no junk simplices.
\end{de}

Note that $\Gamma_A$ and $\Gamma_B$ are smartly paired if no maximal simplex in $\Gamma_A$, $\Gamma_B$ is junk. Given any pair of simplicial complexes $\Gamma_A$ and $\Gamma_B$, one can produce a pair of smartly paired complexes yielding the same coupled link cube complex by recursively removing maximal junk simplices. 

\begin{rmk}
 The procedure of removing maximal junk simplices need not preserve flagness. For example, let $\Gamma_A$ be four $2$\=/simplices arranged to form an equilateral triangle as in Figure~\ref{fig:not smartly paired}. Colour the vertices of the innermost triangle with numbers 1,2,3, and colour the three exterior vertices with colour 4. Let $\Gamma_B$ be a single $2$\=/simplex with vertices of coulour 1,2,3. The innermost triangle in $\Gamma_A$ is the only junk simplex and removing it makes $\Gamma_A$ not flag. 
\end{rmk}

\begin{figure}
 \begin{tikzpicture}[draw=black]
  \draw [fill= black!10](0,0) node[below left]{4} -- 
    (2,0)node[below right]{4} -- 
    ++(120:2)node[above]{4} --cycle;
  \draw [postaction={pattern=flexible hatch,
        hatch distance=5pt,
        hatch thickness=0.2pt,
        pattern color=black}](60:1)node[left]{3}--
    ++(1,0)node[right]{2} --
    (1,0)node[below]{1}-- cycle;
   \path (-0.6, 0.8)node{$\Gamma_A=$};
   \path (4, 0.8)node{$\Gamma_B=$};
  \begin{scope}[rotate=180, shift={(-6.4,-1.3)}]
  \draw[fill= black!10] (60:1)node[right]{3}--
    ++(1,0)node[left]{2} --
    (1,0)node[above]{1}-- cycle;   
  \end{scope}
 \end{tikzpicture}
 \caption{Two flag simplicial complexes that are not smartly paired. Making them smartly paired makes $\Gamma_A$ not flag.}
 \label{fig:not smartly paired}
\end{figure}

Non\=/empty smartly paired simplicial complexes give rise to non\=/empty CLCCs. Therefore, Lemma~\ref{lem:dimension.xcplx.smartly.paired} implies

\begin{cor}\label{cor:dimension.xcplx.smartly.paired}
 If $\Gamma_A$ and $\Gamma_B$ are smartly paired $n$\=/coloured simplicial complexes of pure dimension $k_A$ and $k_B$ respectively, then $\xcplx$ is a cube complex of pure dimension $k_A+k_B+2-n$. 
\end{cor}

\subsection{Functoriality}
Let $\simp_n$ be the {\em category of $n$-coloured simplicial complexes}. That is, the objects of $\simp_n$ are $n$\=/coloured simplicial complexes and the morphisms are simplicial maps that preserve the colouring (\emph{i.e.} they send vertices of the $i$\=/th colour to vertices of the $i$\=/th colour). Let $\simp_n^2$ denote the product category $\simp_n\times\simp_n$. 

Let also $\cc$ be the {\em category of cube complexes}, which has cube complexes as objects and cubical maps as morphisms. 

\begin{lem}\label{lem:functoriality}
	The map sending the pair $(\Gamma_A, \Gamma_B)$ to $\xcplx$ defines a functor $\BX\colon\simp_n^2\to \cc$. If two morphisms $f_A\colon\Gamma_A\to\Gamma_A'$ and $f_B\colon\Gamma_B\to\Gamma_B'$ are injective (resp. surjective) then $\BX(f_A,f_B)\colon\xcplx\to\BX_{\Gamma_A',\Gamma_B'}$ is injective (resp. surjective).	
\end{lem}
\begin{proof}
  We need to define the image of a morphism under $\BX$. Let $(f, g)\colon (\Gamma_A, \Gamma_B)\to (\Gamma_A', \Gamma_B')$ be a pair of maps of $n$\=/coloured simplicial complexes an let $(\fka,\fkb)$ be any vertex of $\xcplx$. 
  
  Since the maps $f_A, f_B$ are simplicial and respect the $n$-coloured structure, the pair $\paren{f(\fka),f(\fkb)}$ denotes a vertex of ${\BX_{\Gamma_A',\Gamma_B'}}$ and it is straightforward to check that this map on vertices extends to a cubical map.
  
  If $f_A$ and $f_B$ are injective, then $\BX(f_A,f_B)$ is injective on the vertices, and hence injective. If $f_A$ and $f_B$ are surjective, then $\BX(f_A,f_B)$ is surjective on the vertices and, since the cube complex $\xcplx$ is defined as a full subcomplex of the product of the joins $A_i\ast B_i$, it is simple to check that $\BX(f_A,f_B)$ is surjective.
\end{proof}

In the above, the cubical maps are in fact injective on each cube. 

Before stating the next lemma, we should recall that a cube complex admits a natural metric obtained by identifying each $k$-cube with $[0, 1]^k\subset \RR^k$. With respect to these metrics we have:
  
\begin{lem}\label{lem:local isometry}
    If two morphisms $f_A$ and $f_B$ of $\simp_n$ are inclusions of full subcomplexes then $\BX(f_A,f_B)$ is a local isometry. 
\end{lem}
\begin{proof} 
As $f_A$ and $f_B$ are inclusions, by Lemma~\ref{lem:functoriality} we see that $\BX(f_A, f_B)$ is as well. 
It can be shown that an injective cubical map is a local isometry if and only if the induced map on links of vertices is an inclusion of a full subcomplex.
 \footnote{%
 	One way to make a formal argument is by equipping the links with an all\=/right spherical metric with maximum distance $\pi$ and remark that an injection between simplicial complexes equipped with their all\=/right spherical metric is isometric if and only if it is the embedding of a full subcomplex. The statement about cube complexes follows from the fact that every point has a neighbourhood that is isometric to a cone over its link (see the proof of \cite[Theorem I.7.16]{bridson_metric_1999}). The condition that the link of vertices embed isometrically implies that also said cones embed isometrically.
 For the non-positively curved case see \cite[Lemma 2.11]{haglund_special_2008}.
}
 
 By Lemma~\ref{lem:links.in.xcplx}, the link of a vertex cube $Q(\fka,\fkb)\in\xcplx$ is $\lk(\fka, \Gamma_A)\ast\lk(\fkb, \Gamma_B)$, while the link of its image in $\BX_{\Gamma_A',\Gamma_B'}$ is $\lk(f_A(\fka), \Gamma_A')\ast\lk(f_B(\fkb), \Gamma_B')$. Note that the map induced from $\BX(f_A,f_B)$ on these links coincides with the inclusion
  \[
  \lk\bigparen{f_A(\fka), f_A(\Gamma_A)}\ast\lk\bigparen{f_B(\fkb), f_B(\Gamma_B)} 
  \hookrightarrow \lk(f_A(\fka), \Gamma_A')\ast\lk(f_B(\fkb), \Gamma_B').  
  \]
  This is the inclusion of a full subcomplex since $f_A(\Gamma_A)$ and $f_B(\Gamma_B)$ are full subcomplexes. 
\end{proof}

Let also $\flag_n\subsetneq\simp_n$ be the category of \emph{flag} $n$\=/coloured simplicial complexes and $\npc\subsetneq\cc$ be the category of non\=/positively curved cube complexes. Then we have the following:

\begin{thm}\label{thm:functoriality.CAT0}
  The functor $\BX\colon\simp_n^2\to\cc$ restricts to a functor $\BX\colon\flag_n^2\to\npc$. Moreover, if $\Gamma_A', \Gamma_B'$ are flag and $f_A\colon\Gamma_A\to\Gamma_A'$ and $f_B\colon\Gamma_B\to\Gamma_B'$ are inclusions of full subcomplexes, then $\BX(f_A,f_B)\colon\xcplx\to\BX_{\Gamma_A',\Gamma_B'}$ induces an inclusion of their fundamental groups.
\end{thm}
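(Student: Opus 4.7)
The first assertion of the theorem is essentially already in hand: when $\Gamma_A$ and $\Gamma_B$ are flag, Corollary~\ref{cor:flag.implies.cat0.CLCC} tells us that $\xcplx$ is non\=/positively curved, so the image of the functor $\BX$ restricted to $\flag_n^2$ lands in $\cat$. Since functoriality was already established in Proposition~\ref{prop:functoriality}, the restricted functor is well defined, and there is nothing more to verify for the first part.

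For the second assertion, the plan is to reduce to a standard general fact: a local isometry between complete, locally CAT(0) geodesic spaces induces an injection on fundamental groups. The reason is that a local isometry $f\colon Y\to Z$ between such spaces lifts to a map $\tilde f\colon \tilde Y\to\tilde Z$ of universal covers, and in the CAT(0) setting this lift turns out to be a globally isometric embedding; in particular it is injective, which forces $\pi_1(f)$ to be injective as well (otherwise a loop in $Y$ killed by $f$ would lift to a non\=/closed path whose two endpoints would be identified by $\tilde f$).

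Given this black box, the argument runs as follows. By Proposition~\ref{prop:functoriality}, when $f_A$ and $f_B$ are inclusions of full subcomplexes the cubical map $\BX(f_A,f_B)\colon\xcplx\to\BX_{\Gamma_A',\Gamma_B'}$ is a local isometry. By the first part of the present theorem both the source and the target are non\=/positively curved cube complexes, so by Gromov's link criterion their universal covers are CAT(0) cube complexes; since both complexes are finite, they are also complete geodesic spaces. The general fact above then applies and yields injectivity of $\pi_1\bigparen{\BX(f_A,f_B)}$.

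The only potentially subtle point is the black box itself, namely that a local isometry between complete non\=/positively curved geodesic spaces is $\pi_1$\=/injective. This is a well\=/known consequence of the Cartan--Hadamard theorem in the CAT(0) setting (see \cite{bridson_metric_1999}, Chapter~II.4), and since both $\xcplx$ and $\BX_{\Gamma_A',\Gamma_B'}$ satisfy the required hypotheses, I would simply cite it rather than reprove it. Thus no further work is needed beyond combining Corollary~\ref{cor:flag.implies.cat0.CLCC} with Proposition~\ref{prop:functoriality} and this standard lifting argument.
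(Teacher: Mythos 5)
Your proposal is correct and follows essentially the same route as the paper: the first assertion is dispatched by Corollary~\ref{cor:flag.implies.cat0.CLCC}, and the second combines the local-isometry conclusion of Proposition~\ref{prop:functoriality} with the standard fact that local isometries between non-positively curved spaces are $\pi_1$-injective (the paper cites \cite[Proposition 4.14]{bridson_metric_1999} for exactly this). Your sketch of why that black box holds, via lifting to universal covers, is a correct expansion of the citation but adds nothing that needs checking.
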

\begin{proof}
  The fact that $\BX$ restricts to a functor to $\npc$ follows from Lemma~\ref{lem:functoriality} and Corollary~\ref{cor:flag.implies.cat0.CLCC}.
  
  If $f_A$ and $f_B$ are inclusions of full subcomplexes, it follows from Lemma~\ref{lem:local isometry} that $\BX(f_A,f_B)\colon\xcplx\to\BX_{\Gamma_A',\Gamma_B'}$ is a local isometry. Since $\xcplx$ and $\BX_{\Gamma_A',\Gamma_B'}$ are non\=/positively curved spaces, the map induced on their fundamental groups 
  \[
  \BX(f_A,f_B)_*\colon\pi_1\bigparen{\xcplx,x_0}\to\pi_1\bigparen{\BX_{\Gamma_A',\Gamma_B'},\BX(f_A,f_B)(x_0)}
  \]
  is injective (see \cite[Proposition II.4.14]{bridson_metric_1999}).
\end{proof}

\subsection{Connectedness}
In general, it is not trivial to verify if the cube complex $\xcplx$ is connected. The following construction aims to simplify this task. We define a \emph{connection graph} $\CG_A$ whose vertices are the vertices of $\xcplx$ with maximal $A$\=/coordinates
\[
\rm{V}\bigparen{\CG_A}\coloneqq\bigbrace{(\bar\fka,\ubar\fkb)\in\xcplx \bigmid \bar\fka\subseteq\Gamma_A\text{ maximal simplex}}
\]
(note that the maximal simplices do not need to have the same dimension in general).
 We connect two vertices $(\bar\fka,\ubar\fkb)$ and $(\bar\fka',\ubar\fkb')$ in with an edge in $\CG_A$ (denoted by $(\bar\fka,\ubar\fkb)\sim(\bar\fka',\ubar\fkb')$ ) if and only if there exists a simplex $\fkb''\subseteq \Gamma_B$ that is complementary to $\bar\fka\cap\bar\fka'$ and so that $\ubar\fkb\cup\ubar\fkb'\subseteq \fkb''$.
 
 \begin{rmk}
  The condition $\ubar\fkb\cup\ubar\fkb'\subseteq \fkb''$ is necessary to ensure that the $B$\=/coordinates of $(\bar\fka,\ubar\fkb)$ and $(\bar\fka',\ubar\fkb')$ are `compatible'. The inclusion will often be strict, as when $i$ is a colour so that $\bar\fka$ and $\bar\fka'$ have differing vertices on the $i$\=/th coordinate, then $\bar\fka\cap\bar\fka'$ will altogether lack vertices on that coordinate.
  Note also that the intersection $\bar\fka\cap\bar\fka'$ is allowed to be empty.
 \end{rmk}

\begin{lem}\label{lem:connect.criterion}
Let $\Gamma_A$ and $\Gamma_B$ be smartly paired. Then $\xcplx$ is connected if and only if $\CG_A$ is connected.
\end{lem}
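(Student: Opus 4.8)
The plan is to prove the equivalence by reducing the connectedness of the (potentially complicated) cube complex $\xcplx$ to that of the auxiliary graph $\CG_A$ via a retraction-type argument. The key observation is that every vertex of $\xcplx$ can be ``pushed'' to a vertex with maximal $A$-coordinate while staying inside $\xcplx$, and that two vertices joined by an edge in $\xcplx$ get pushed to vertices that are either equal or connected in $\CG_A$.

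First I would set up the pushing map. Given a vertex $(\fka,\fkb)\in\xcplx$, since $\Gamma_A$ is smartly paired and $\fka\in\Gamma_A$, there exists a maximal simplex $\bar\fka\supseteq\fka$ in $\Gamma_A$; because $\bar\fka$ is maximal it has a complementary simplex $\ubar\fkb\subseteq\fkb$ in $\Gamma_B$ (here I use that $\fka\supseteq$ some complementary simplex forces the complement to shrink appropriately — more precisely, the coordinates not covered by $\bar\fka$ are a subset of those covered by $\fkb$, so we may take $\ubar\fkb$ to be $\fkb$ restricted to those coordinates). Thus $(\bar\fka,\ubar\fkb)$ is a vertex of $\CG_A$, and I claim $(\fka,\fkb)$ and $(\bar\fka,\ubar\fkb)$ lie in a common cube of $\xcplx$: indeed one passes from $(\fka,\fkb)$ to $(\bar\fka,\ubar\fkb)$ by adding $A$-coordinates and removing the corresponding $B$-coordinates, i.e. they are extremal vertices of the cube $Q(\bar\fka,\fkb)$ using the earlier description of cubes of $\BK$ (and this cube lies in $\xcplx$ since all its coordinate simplices are sub-simplices of $\bar\fka\in\Gamma_A$ and $\fkb\in\Gamma_B$). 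In particular $(\fka,\fkb)$ and $(\bar\fka,\ubar\fkb)$ are in the same connected component of $\xcplx$. A subtlety: $\bar\fka$ and $\ubar\fkb$ are not canonical, so this is a choice, not a well-defined map — but that is harmless for a connectedness argument.

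Next, for the forward implication ($\xcplx$ connected $\Rightarrow$ $\CG_A$ connected), note that since every vertex of $\xcplx$ is in the same component as some vertex of $\CG_A$, it suffices to show that if $(\bar\fka,\ubar\fkb)$ and $(\bar\fka',\ubar\fkb')$ are vertices of $\CG_A$ joined by an edge in $\xcplx$, then they are joined by a path in $\CG_A$. An edge of $\xcplx$ changes exactly one coordinate, so $\bar\fka$ and $\bar\fka'$ differ in at most one vertex; by maximality they cannot be strictly nested, so actually $\bar\fka=\bar\fka'$ and $\ubar\fkb,\ubar\fkb'$ are complementary to $\bar\fka$ — but then $\ubar\fkb=\ubar\fkb'$, contradiction, unless the edge is in a coordinate where... actually I need to be slightly more careful: the edge could be between $(\bar\fka,\ubar\fkb)$ and a neighbour that is \emph{not} in $\CG_A$, so instead I should argue as follows. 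Take any edge-path in $\xcplx^{(1)}$ between two $\CG_A$-vertices and push each intermediate vertex to $\CG_A$; consecutive pushed vertices come from vertices $(\fka,\fkb)$, $(\fka',\fkb')$ differing in one coordinate, so I must show their chosen pushes $(\bar\fka,\ubar\fkb)$, $(\bar\fka',\ubar\fkb')$ satisfy the $\CG_A$-adjacency relation (or are equal). This is where I verify that $\bar\fka\cap\bar\fka'$ has a complement $\fkb''$ containing $\ubar\fkb\cup\ubar\fkb'$: one can take $\fkb''$ built from $\fkb\cup\fkb'$ (which is a simplex of $\Gamma_B$ up to the single swapped coordinate) restricted to the coordinates not in $\bar\fka\cap\bar\fka'$.

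For the converse ($\CG_A$ connected $\Rightarrow$ $\xcplx$ connected), I would show: (i) every vertex of $\xcplx$ is connected by a path in $\xcplx$ to a vertex of $\CG_A$ — this is the pushing construction above; and (ii) if $(\bar\fka,\ubar\fkb)\sim(\bar\fka',\ubar\fkb')$ in $\CG_A$, then they are connected by a path in $\xcplx$. For (ii), using the witness $\fkb''$ complementary to $\bar\fka\cap\bar\fka'$ with $\ubar\fkb\cup\ubar\fkb'\subseteq\fkb''$: the vertex $(\bar\fka\cap\bar\fka',\ \fkb'')$ lies in $\xcplx$ (its coordinate simplices are in $\Gamma_A$, $\Gamma_B$ respectively), and it is connected to $(\bar\fka,\ubar\fkb)$ inside the cube $Q(\bar\fka,\fkb'')$ — wait, I need $\ubar\fkb\subseteq\fkb''$ which holds — and similarly to $(\bar\fka',\ubar\fkb')$ inside $Q(\bar\fka',\fkb'')$; both cubes lie in $\xcplx$. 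Concatenating gives the desired path. Then a $\CG_A$-path lifts to an $\xcplx$-path, proving connectedness.

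\medskip

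\textbf{Main obstacle.} The delicate point is bookkeeping the ``complementary simplex'' relations through the pushing map and through a single edge-move: I must check that shrinking $\fkb$ to $\ubar\fkb$ (the complement of the chosen maximal $\bar\fka$) and doing so compatibly for two adjacent vertices yields simplices still satisfying the $\CG_A$-adjacency condition, and that all the intermediate ``corner'' vertices like $(\bar\fka\cap\bar\fka',\fkb'')$ genuinely lie in $\xcplx$ (i.e. their coordinate simplices really are faces of simplices in $\Gamma_A$ and $\Gamma_B$). This is entirely a matter of carefully tracking which coordinates are ``$A$-coordinates'' and which are ``$B$-coordinates'' at each stage, using the combinatorial description of cubes of $\BK$ from the Notation subsection; there is no conceptual difficulty, but it is the step most prone to error and deserves to be written out with care.
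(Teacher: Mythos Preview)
Your proposal is correct. The direction ``$\CG_A$ connected $\Rightarrow$ $\xcplx$ connected'' is essentially identical to the paper's: both go through the intermediate vertex $(\bar\fka\cap\bar\fka',\fkb'')$ and then observe that every vertex of $\xcplx$ can be pushed to some $\CG_A$-vertex along a cube $Q(\bar\fka,\fkb)$.

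For the direction ``$\xcplx$ connected $\Rightarrow$ $\CG_A$ connected'' you take a genuinely different route. The paper argues by induction on the length of an $\xcplx$-path: it locates the \emph{last} index $k_1$ along the path where $\fka_{k_1}\not\subseteq\bar\fka'$, pushes that single vertex to a $\CG_A$-vertex $(\bar\fka^{(1)},\ubar\fkb^{(1)})$, checks directly that $(\bar\fka^{(1)},\ubar\fkb^{(1)})\sim(\bar\fka',\ubar\fkb')$ using the specific containment $\fka_{k_1+1}\subseteq\bar\fka^{(1)}\cap\bar\fka'$, and then reroutes the path to terminate at $(\bar\fka^{(1)},\ubar\fkb^{(1)})$ with strictly smaller $k$-index. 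You instead push \emph{every} vertex of the path and verify that pushes of $\xcplx$-adjacent vertices are $\CG_A$-adjacent. Your verification goes through: for an edge between $(\fka,\fkb)$ and $(\fka',\fkb')$ one of $\fkb,\fkb'$ contains the other, say $\fkb\cup\fkb'=\fkb^i$, and since $\fka\cap\fka'\subseteq\bar\fka\cap\bar\fka'$ the restriction of $\fkb^i$ to the coordinates outside $\bar\fka\cap\bar\fka'$ is the required witness $\fkb''$ (it contains both $\ubar\fkb$ and $\ubar\fkb'$ because $\bar\fka,\bar\fka'\supseteq\bar\fka\cap\bar\fka'$). Your approach is arguably cleaner\textemdash it avoids the induction and the somewhat delicate choice of $k_1$\textemdash at the cost of having to verify $\CG_A$-adjacency for \emph{arbitrary} pushes of adjacent vertices rather than the more constrained situation the paper engineers. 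Either way the ``main obstacle'' you flag is exactly the right place to be careful, and your sketch of how to handle it is sound.
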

\begin{proof}
If $(\bar\fka,\ubar\fkb)\sim(\bar\fka',\ubar\fkb')$ is an edge in $\xcplx$, then $(\bar\fka,\ubar\fkb)$ and $(\bar\fka',\ubar\fkb')$ are connected by a path in $\xcplx$ because they are both connected to the vertex $(\bar\fka\cap\bar\fka',\fkb'')\in\xcplx$. 
Now, any vertex $(\fka,\fkb)\in\xcplx$ is connected in $\xcplx$ to $(\bar\fka,\ubar\fkb)$ where $\bar\fka$ is any maximal simplex containing $\fka$ and $\ubar\fkb$ is the subsimplex of $\fkb$ complementary to $\bar\fka$. It follows that if $\CG_A$ is connected then so is $\xcplx$.

Conversely, assume that $\xcplx$ is connected. Then for any two vertices $(\bar\fka,\ubar\fkb)$ and $(\bar\fka',\ubar\fkb')$ in $\CG_A$ there exists a path 
\[
(\bar\fka,\ubar\fkb)=(\fka_0,\fkb_0),\ldots,(\fka_n,\fkb_n)=(\bar\fka',\ubar\fkb')
\]
in the $1$\=/skeleton of $\xcplx$. If $\fka_{i}$ is contained in $\bar\fka'$ for every $0\leq i<n$, then the two vertices actually coincide, as $\bar\fka=\bar\fka'$ by maximality and $\ubar\fkb=\ubar\fkb'$ because at no time any of those $B$\=/coordinates could have been changed.

Otherwise, let $k_1$ be the largest index so that $\fka_{k_1}$ is \emph{not} contained in $\bar\fka'$ and let $(\bar\fka^{(1)},\ubar\fkb^{(1)})\in \CG_A$ be a vertex with $\fka_{k_1}\subseteq\bar\fka^{(1)}$ and $\ubar\fkb^{(1)}\subseteq\fkb_{k_1}$. We claim that $(\bar\fka^{(1)},\ubar\fkb^{(1)})\sim (\bar\fka',\ubar\fkb')$ is an edge in $\CG_A$. 
Indeed, since $(\fka_{k_1},\fkb_{k_1})$ and $(\fka_{k_1+1},\fkb_{k_1+1})$ form an edge, the simplices $\fka_{k_1}$ and $\fka_{k_1+1}$ only differ in one coordinate and are contained in one another. By the choice of $k_1$, $\fka_{k_1+1}$ is contained in $\bar \fka'$, while $\fka_{k_1}$ is not. It is hence $\fka_{k_1}$ that contains $\fka_{k_1+1}$. It follows that $\fka_{k_1+1}\subseteq\bar\fka^{(1)}\cap\bar\fka'$ and that $\ubar\fkb^{(1)}\subseteq\fkb_{k_1}\subsetneq\fkb_{k_1+1}$. Moreover, $\ubar\fkb'\subsetneq \fkb_{k_1+1}$ because $\fka_{k_i+1}\subsetneq\bar\fka'$, therefore $\ubar\fkb'\cup\ubar\fkb^{(1)}\subseteq\fkb_{k_1+1}$. 
It follows that we can pick the unique subsimplex of $\fkb_{k_1+1}$ that is complementary to $\bar\fka^{(1)}\cap\bar\fka'$ to be the simplex $\fkb''$ as in the definition of edge.

To conclude, by gradually enlarging $\fka_{k_1}$ we obtain a path in the $1$\=/skeleton of $\xcplx$ joining $(\fka_{k_1},\fkb_{k_1})$ with $(\bar\fka^{(1)},\ubar\fkb^{(1)})$ so that the $A$\=/coordinates are always subsimplices of $\bar\fka^{(1)}$. Joining the restriction of the path from $(\bar\fka,\ubar\fkb)$ to $(\bar\fka',\ubar\fkb')$ up to $(\fka_{k_1},\fkb_{k_1})$ with this path, we obtain a path connecting $(\bar\fka,\ubar\fkb)$ to $(\bar\fka^{(1)},\ubar\fkb^{(1)})$ so that the largest index $k_2$ for which $\fka_{k_2}$ is not contained in $\bar\fka^{(1)}$ is strictly smaller than $k_1$. The lemma now follows by induction.
\end{proof}

\begin{cor}\label{cor:max dim pure implies connected}
 If $\Gamma_A$, $\Gamma_B$ are smartly paired $n$\=/coloured simplicial complexes of pure dimension $(n-1)$ then $\xcplx$ is connected. 
\end{cor}

\begin{prop}\label{prop:components are CLCC}
 Connected components of CLCCs are CLCCs.
\end{prop}
\begin{proof}
 Let $\xcplx$ be a CLCC. We can assume that $\Gamma_A$ and $\Gamma_B$ are smartly paired. Fix any $(\fka_0,\fkb_0)\in\xcplx$ and choose a $(\bar \fka_0,\ubar\fkb_0)\in\CG_A$ with $\fka_0\subseteq \bar\fka_0$. Let $\CG_A^{(0)}$ be the connected component of $\CG_A$ containing $(\bar\fka_0,\ubar\fkb_0)$. Define:
 \begin{align*}
  \Lambda_A&\coloneqq \bigcup\braces{\bar\fka\mid \exists(\bar\fka,\ubar\fkb)\in\CG_A^{(0)}} \subseteq \Gamma_A \\
  \Lambda_B&\coloneqq \bigcup\braces{\bar\fkb\mid \exists(\bar\fka,\ubar\fkb)\in\CG_A^{(0)},\ \ubar\fkb\subseteq\bar\fkb} \subseteq \Gamma_B.
 \end{align*}

 It follows trivially from the definition that $\Lambda_A$ and $\Lambda_B$ are smartly paired and the connection graph associated with $\xLcplx$ coincides with $\CG_A^{(0)}$. This shows that $\xLcplx$ is connected. 
 
 By functoriality, $\xLcplx$ is naturally identified with a subcomplex of $\xcplx$. The point $(\fka_0,\fkb_0)$ belongs to $\xLcplx$, hence the latter is contained in the connected component of $(\fka_0,\fkb_0)$ in $\xcplx$. We claim that this containment is an equality.
 
 If it was not, there would exist $(\fka,\fkb)\in\xLcplx$ and $(\fka',\fkb')\in\xcplx\smallsetminus\xLcplx$ so that $\{(\fka,\fkb),(\fka',\fkb')\}$ is an edge in $\xcplx$. In particular, $\fka, \fka'$ and $\fkb,\fkb'$ pairwise contain each other. Let $(\bar\fka,\ubar\fkb)\in \CG_A^{(0)}$ and $(\bar\fka',\ubar\fkb')\in \CG_A$ be so that $\fka\subseteq\bar\fka$ and $\fka'\subseteq\bar\fka'$.

 We claim that $(\bar\fka,\ubar\fkb)$ and $(\bar\fka',\ubar\fkb')$ are connected by an edge in $\CG_A$. If $\bar\fka\cap\bar\fka'$ does not have a vertex on the $i$\=/th coordinate, then we see that either $\ubar\fkb$ has one; or $\ubar\fkb'$ has one; or both $\bar\fka$ and $\bar\fka'$ have one, but those vertices differ. In any case, we deduce that $\max\{\fkb,\fkb'\}$ must have a vertex on the $i$\=/coordinate (in the last case we see that $\min\{\fka,\fka'\}$ cannot have a vertex on the $i$\=/th coordinate, else $\bar\fka$ and $\bar\fka'$ would both contain it). This shows that $\max\{\fkb,\fkb'\}$ contains a simplex $\fkb''$ that is complementary to $\bar\fka\cap\bar\fka'$. Since $\ubar\fkb$ and $\ubar\fkb'$ are both contained in $\max\{\fkb,\fkb'\}$, it is easy to see that they are actually contained in $\fkb''$. This proves our claim. It follows that $(\bar\fka',\ubar\fkb')$ belongs to $\CG_A^{(0)}$ as well, and hence $(\fka',\fkb')\in\xLcplx$, yielding the required contradiction.
\end{proof}

\begin{rmk}
 By symmetry, one could of course define a graph $\CG_B$ analogous to $\CG_A$ and use this connection graph to check for connectivity and to relize connected components as coupled\=/links cube complexes. It is not hard to show that the simplicial complexes $\Lambda_A'$ and $\Lambda_B'$ thus obtained coincide with the simplicial complexes constructed using the graph $\CG_A$.
\end{rmk}

\subsection{Hyperplanes}\label{ssec:hyperplanes in CLCC}
 
 Recall that a \emph{hyperplane} $\hpl$ of a cube complex $X$ is an equivalence class of parallel edges of $X$. The {\em carrier} $N(\hpl)$ of a hyperplane $\hpl$, is the union of all cubes having some edge in $\hpl$. 
 
 Hyperplanes can be realized geometrically: if $X=c$ is a single $n$\=/cube, a hyperplane on it can be identified with the $(n-1)$\=/dimensional cube obtained as convex hull of the mid points of the edges in $\hpl$. We call such convex hull the \emph{midcube} associated with $\hpl$.  
 With each hyperplane in a general cube complex $X$ is uniquely associated a cube complex $H$ and a piecewise linear map $H\to X$. Namely, the restriction of $\hpl$ to each cube $c$ in the carrier $N(\hpl)$ determines some (possibly more than one) midcubes of $c$. 
 The cube complex $H$ is constructed from the disjoint union of all such midcubes by gluing the faces that coincide in $X$. The map $H\to X$ is the map restricting to the identity on each midcube (this function is piecewise linear but not cubical because the midcubes are not faces of cubes of $X$). 
 
 We will now show that class of coupled link cube complexes is closed under taking hyperplanes. Namely, let $\xcplx$ be a coupled link cube complex. For a given edge $e\in \xcplx$, let $\hpl(e)$ be the corresponding hyperplane and $H(e)\to X$ the associated piecewise linear map. 
 
 \begin{lem}\label{lem:hyperplanes.are.clcc}
  The cube complex $H(e)$ is a CLCC. Furthermore, the map $H(e)\to \xcplx$ is an embedding.
 \end{lem}

 \begin{proof}
  Let $e$ be an edge on the $i$\=/th coordinate. That is, $e$ is equal to $Q(\fka\ast \{a_i\},\fkb\ast \{b_i\})$ where $a_i, b_i$ are vertices in $\Gamma_A$, $\Gamma_B$ and $\fka\ast \{a_i\}$, $\fkb\ast \{b_i\}$ are simplices of $\Gamma_A$, $\Gamma_B$. Let $\Lambda_A\coloneqq \lk(a_i,\Gamma_A)\ast \{a_i\}$ and $\Lambda_B\coloneqq \lk(b_i,\Gamma_B)\ast\{b_i\}$. The $n$\=/colourings of $\Gamma_A$ and $\Gamma_B$ restrict to $n$\=/colourings of $\Lambda_A$ and $\Lambda_B$. With these colourings, the cube complex $\xLcplx$ is trivially identified to a subcomplex of $\xcplx$ (more formally, $\xLcplx$ is naturally embedded into $\xcplx$ by functoriality). By construction, $e$ is contained in $\xLcplx$. We claim that the carrier $N(\hpl(e))$ is equal to the connected component of $\xLcplx$ containing $e$.
  
  An edge $e'$ parallel to $e$ must be of the form $Q(\fka'\ast \{a_i\},\fkb'\ast \{b_i\})$, where $\fka',\fka$ and $\fkb,\fkb$ differ in one coordinate. The same goes for every edge parallel to $e'$ and, repeating the process, we see that every edge $e''\in\hpl(e)$ is equal to $Q(\fka''\ast \{a_i\},\fkb''\ast \{b_i\})$ for appropriate $\fka''\subseteq\lk(a_i,\Gamma_A),\fkb''\subseteq\lk(b_i,\Gamma_B)$. From this we deduce that the carrier $N(\hpl(e))$ is contained in $\xLcplx$. Since it is connected, $N(\hpl(e))$ is actually contained in a connected component of $\xLcplx$.
  
  It is easy to see that $\xLcplx$ is isomorphic to the direct product 
  \[
   \BX_{\lk(a_i,\Gamma_A),\lk(b_i,\Gamma_B)}\times [0,1],
  \]
  where $\lk(a_i,\Gamma_A)$ and $\lk(b_i,\Gamma_B)$ are seen as $(n-1)$\=/coloured simplicial complexes (see also Lemma~\ref{lem:product CLCC}). In particular, connected components of $\xLcplx$ are direct products of connected components of $\BX_{\lk(a_i,\Gamma_A),\lk(b_i,\Gamma_B)}$ with $[0,1]$. It is immediate to verify that the connected component of $\xLcplx$ containing $e$ is contained in the carrier $N(\hpl(e))$.
  
  By Proposition~\ref{prop:components are CLCC}, connected components of CLCCs are themselves CLCCs. We can therefore find $\Delta_A\subseteq\lk(a_i,\Gamma_A)$ and $\Delta_B\subseteq \lk(b_i,\Gamma_i)$ so that $N(\hpl(e))=\BX_{\Delta_A,\Delta_B}\times[0,1]$.
  As already noted, each edge in $\hpl(e)$ is an edge in the $i$\=/th coordinate. It follows that the map $H\to X$ is an embedding because each cube $c\in N(\hpl(e))$ determines a unique midcube in $H$ (the one orthogonal to the $i$\=/th coordinate). Moreover, we see that $H=\BX_{\Delta_A,\Delta_B}$ because the decomposition $N(\hpl(e))=\BX_{\Delta_A,\Delta_B}\times[0,1]$ shows that each midcube in $H$ corresponds to a cube in $\BX_{\Delta_A,\Delta_B}$.
 \end{proof}
 
 Now that we have an understanding of hyperplanes in CLCCs, we can use this to show that they are special. For more details on special cube complexes see \cite{haglund_special_2008}. 
 
 \begin{de}[Figure~\ref{fig:osculation}]\label{de:special}
  A non-positively curved cube complex $X$ is {\em special} if the hyperplanes of $X$ satisfy the following: 
  \begin{itemize}
  	\item they are embedded, \emph{i.e.} the piecewise linear map $H\to X$ is an embedding; 
  	\item they 2-sided, \emph{i.e.} they separate their carriers in two connected components;
  	\item they do not self-osculate;
  	\item they do not inter-osculate. 
  \end{itemize}
 \end{de}

\begin{figure}
    \centering
    \begin{subfigure}{0.38 \textwidth}
	\centering
	\includegraphics{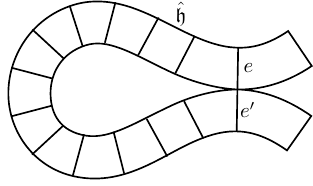}     
    \end{subfigure}\hspace{1 em}
    \begin{subfigure}{0.54 \textwidth}
	\centering
	\includegraphics{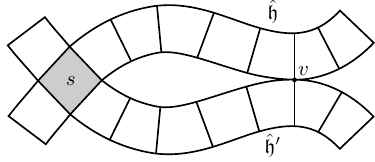}     
    \end{subfigure}
    \caption{Carriers of a self-osculating hyperplane (left) and a pair of inter-osculating ones (right).}
    \label{fig:osculation}
\end{figure}

 \begin{prop}
 	Let $\Gamma_A$ and $\Gamma_B$ be flag, then $\xcplx$ is special.
 \end{prop}
\begin{proof}
	We have seen in Lemma \ref{lem:hyperplanes.are.clcc} that hyperplanes are embedded. 
	It is also clear from the proof of Lemma \ref{lem:hyperplanes.are.clcc} that hyperplanes are 2-sided. 
	Thus we need only check that they do not self- or inter-osculate. 
	
	Let $\Gamma_A$ and $\Gamma_B$ be flag complexes.
	Let $\hpl$ be a hyperplane i.e. an equivalence class of edges of $\xcplx$.
	The hyperplane $\hpl$ self-osculates if there are two edges $e, e'\in \hpl$ which are adjacent to the same vertex of $\xcplx$. 
	However, we saw in the proof of Lemma~\ref{lem:hyperplanes.are.clcc} that there exist $a_i\in\Gamma_A$ and $b_i\in\Gamma_B$ so that each edge of $\hpl$ comes from changing $a_i$ to $b_i$. 
	At each vertex of $\xcplx$ there is at most one such edge $e$. 
	Thus we see that $\hpl$ does not self-osculate. 
	
	We are now left to show that hyperplanes do not inter-osculate. 
	Two hyperplanes $\hpl$ and $\hpl'$ interosculate if they intersect in a square $s$ and there is a vertex $v$ such that both hyperplanes contain edges adjacent to $v$ but do not intersect in any cube adjacent to $v$. 
	Without loss of generality, we will assume that all the edges in the hyperplane $\hpl$ change the first coordinate from $a_1$ to $b_1$ and the edges in $\hpl'$ change the second coordinate from $a_2$ to $b_2$.
	Since $\hpl$ and $\hpl'$ intersect in the square $s$ we see that the first and second coordinates of all the vertices at $s$ can be any of the four choices $(a_1, a_2), (a_1, b_2), (b_1, a_2)$ or $(b_1, b_2)$. 
	Thus we see that $a_1$ and $a_2$ are joined by an edge in $\Gamma_A$. 
	Similarly $b_1$ and $b_2$ are joined by an edge in $\Gamma_B$.
	
	Let $v = (\fka, \fkb)$ be the vertex at which $\hpl$ and $\hpl'$ osculate. 
	Without loss of generality, we can assume that the first two coordinates of $v$ are $(a_1, a_2)$ or $(a_1, b_2)$. 
	Suppose that they are $(a_1, b_2)$. 
	Then the endpoint coming from $\hpl$ has first two coordinates $(b_1, b_2)$ and the endpoint coming from $\hpl'$ has first two coordinates $(a_1,a_2)$.
	In this case both $\fka\ast\{a_2\} = \fka^2$ and $\fkb\ast\{b_1\} = \fkb^1$ are simplices of $\Gamma_A$ and $\Gamma_B$ respectively. 
	Thus we see that $(\fka^2_1, \fkb^1_2)$ is a vertex of $\xcplx$. 
	This show that the edges of $\hpl$ and $\hpl'$ at $v$ are adjacent on a square and thus $\hpl$ and $\hpl'$ intersect in this square. 
	
	Now suppose the first two coordinate of $v$ are $(a_1, a_2)$. 
	Then the endpoint coming from $\hpl$ has first two coordinates $(b_1, a_2)$ and the endpoint coming from $\hpl'$ has first two coordinates $(a_1,b_2)$. 
	Thus we see that $\fkb\ast\{b_1\}$ and $\fkb\ast\{b_2\}$ are both simplices of $\Gamma_B$. 
	We also know that $b_1$ and $b_2$ are connected by an edge. 
	By flagness, we can conclude that $\fkb\ast[b_1, b_2]$ is a simplex of $\Gamma_B$ and thus the vertex $(\fka_{12},\fkb\ast[b_1, b_2])$ is a vertex of $\xcplx$. 
	We now see that the edges defined by $\hpl$ and $\hpl'$ at $v$ are adjacent on a square and thus the hyperplanes intersect in this square. 
\end{proof}

\subsection{Homological dimension}
In this subsection we will introduce some tools to compute the $\ZZ_2$\=/(co)homological dimension of coupled link cube complexes in terms of the homology groups of the defining simplicial complexes.

\begin{de}
 We say that a cube complex $X$ (resp. $\Delta$\=/complex $\Lambda$) of pure dimension $d$ has a \emph{fundamental class} if the $d$\=/chain $\varSigma_X$ (resp. $\varSigma_\Lambda$) given by the sum of all the $d$\=/dimensional cubes (resp. simplices) is a cycle.
\end{de}

Note that a $d$\=/dimensional cube complex (resp. $\Delta$\=/complex) with a fundamental class has non\=/trivial $d$\=/dimensional homology.
 
Given a cube complex $X$ of pure dimension $d$, for every cube $c\subseteq X$ the link $\lk(c,X)$ has pure dimension $d-\dim(c)-1$ and we have $\loc_c(\varSigma_X)=\varSigma_{\lk(c,X)}$ ($\loc_c$ is the localisation map as defined in Section~\ref{sec:prelims}). It follows from Lemma~\ref{lem:n.chains.recognised.locally} that, for every fixed $k<d$, $X$ has a fundamental class if and only if the links of all the $k$\=/cubes in $X$ have a fundamental class. The same argument applies to $\Delta$\=/complexes as well.

For the next lemma, we say that two $n$\=/coloured simplicial complexes are \emph{doubly smartly paired} if they are smartly paired and also every codimension $1$ face of maximal simplices admits a complementary simplex. Note that if $\Gamma_A$ and $\Gamma_B$ are doubly smartly paired then $\xcplx$ has dimension at least $1$.

\begin{lem}\label{lem:fundamental.class.smartly.paired.complexes}
 Let $\Gamma_A$ and $\Gamma_B$ be smartly paired $n$\=/coloured simplicial complexes of pure dimension $k_A$ and $k_B$ respectively. If $\Gamma_A$ and $\Gamma_B$ have a fundamental class then so does $\xcplx$. Moreover, if $\Gamma_A$ and $\Gamma_B$ are doubly smartly paired then the converse holds as well.\oldnote{I am not sure whether this converse holds if I only assume smartly paired}
\end{lem}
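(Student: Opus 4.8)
The plan is to recognise both fundamental classes ``one link at a time''. I would use two facts from the paragraph preceding the statement: by Lemma~\ref{lem:dimension.xcplx.smartly.paired} the complex $\xcplx$ has pure dimension $d=k_A+k_B+2-n$, so $\loc_c(\varSigma_{\xcplx})=\varSigma_{\lk(c,\xcplx)}$ for every cube $c\subseteq\xcplx$; and, for any fixed $k<d$, the local recognition criterion of Lemma~\ref{lem:n.chains.recognised.locally} says that $\xcplx$ has a fundamental class if and only if $\lk(e,\xcplx)$ does for every $k$\=/cube $e$ (and likewise for $\Gamma_A$, $\Gamma_B$, with cubes replaced by simplices). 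The key point is a product formula for the fundamental chain of a vertex link: if $v=(\fka,\fkb)$ is a vertex of $\xcplx$, then Lemma~\ref{lem:links.in.xcplx} gives $\lk(v,\xcplx)=\lk(\fka,\Gamma_A)\ast\lk(\fkb,\Gamma_B)$, and moreover
\[
  \varSigma_{\lk(v,\xcplx)}=\varSigma_{\lk(\fka,\Gamma_A)}\ast\varSigma_{\lk(\fkb,\Gamma_B)}.
\]
This holds because a link of a simplex in a pure\=/dimensional complex is again pure\=/dimensional and (since $\fka,\fkb$ extend to maximal simplices) non\=/empty, and because the top\=/dimensional simplices of a join of pure\=/dimensional complexes are exactly the joins of pairs of top\=/dimensional simplices; the case where $\fka$ or $\fkb$ is itself maximal is covered by the convention that a join with the empty complex is the identity. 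Combining this with Lemma~\ref{lem:joinsofcycles} (both factors on the right being nontrivial) yields the crucial equivalence: $\loc_v(\varSigma_{\xcplx})$ is a cycle \emph{if and only if} both $\lk(\fka,\Gamma_A)$ and $\lk(\fkb,\Gamma_B)$ have a fundamental class.

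For the first implication, suppose $\Gamma_A$ and $\Gamma_B$ have fundamental classes. Since localisation maps commute with differentials, $\loc_\fka(\varSigma_{\Gamma_A})=\varSigma_{\lk(\fka,\Gamma_A)}$ is a cycle for \emph{every} simplex $\fka\subseteq\Gamma_A$, and similarly in $\Gamma_B$; hence every vertex link $\lk(\fka,\Gamma_A)$, $\lk(\fkb,\Gamma_B)$ has a fundamental class. By the crucial equivalence, $\loc_v(\varSigma_{\xcplx})$ is a cycle for every vertex $v$ of $\xcplx$, and the local recognition criterion (with $k=0$) gives that $\xcplx$ has a fundamental class.

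For the converse I would assume in addition that $\Gamma_A$ and $\Gamma_B$ are doubly smartly paired and that $\xcplx$ has a fundamental class. Localising at vertices, $\varSigma_{\lk(v,\xcplx)}$ is a cycle for every $v=(\fka,\fkb)$, so by the crucial equivalence $\lk(\fka,\Gamma_A)$ has a fundamental class whenever $\fka$ occurs as the $A$\=/coordinate simplex of a vertex of $\xcplx$, i.e.\ whenever $\fka$ has a complementary simplex in $\Gamma_B$. Now take an arbitrary $(k_A-1)$\=/simplex $\tau\subseteq\Gamma_A$: by purity of $\Gamma_A$ it is a codimension\=/$1$ face of some maximal simplex, so the doubly\=/smartly\=/paired hypothesis furnishes a complementary simplex for $\tau$, whence $\loc_\tau(\varSigma_{\Gamma_A})=\varSigma_{\lk(\tau,\Gamma_A)}$ is a cycle. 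As this holds for all $(k_A-1)$\=/simplices and $k_A-1<k_A$, the local recognition criterion shows $\varSigma_{\Gamma_A}$ is a cycle, i.e.\ $\Gamma_A$ has a fundamental class; the argument for $\Gamma_B$ is symmetric.

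I expect the only genuine difficulty to be the bookkeeping in the converse: the crucial equivalence only ``sees'' simplices of $\Gamma_A$ that possess a complementary simplex, so in order to apply the local recognition criterion to $\Gamma_A$ one needs \emph{every} $(k_A-1)$\=/simplex to be of this form — which is exactly why ``smartly paired'' must be upgraded to ``doubly smartly paired'' for this direction. The subsidiary points — the product formula for $\varSigma$ of a join, and the degenerate cases such as $\dim(\xcplx)=0$ or $k_A=0$, where one localises at the empty cube and some of the links become empty — are routine once the definitions are unwound.
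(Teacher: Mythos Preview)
Your proposal is correct and follows essentially the same route as the paper's proof: both arguments localise $\varSigma_{\xcplx}$ at vertices, identify $\loc_v(\varSigma_{\xcplx})$ with $\loc_\fka(\varSigma_{\Gamma_A})\ast\loc_\fkb(\varSigma_{\Gamma_B})$ via the join description of the link, and then invoke Lemma~\ref{lem:joinsofcycles} together with the local recognition criterion (Lemma~\ref{lem:n.chains.recognised.locally}); for the converse both use the doubly\=/smartly\=/paired hypothesis exactly to ensure every $(k_A-1)$\=/simplex admits a complementary simplex. Your write\=/up is slightly more explicit about the edge cases (maximal $\fka$ or $\fkb$, nontriviality of the join factors), but the argument is the same.
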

\begin{proof}
 By Corollary~\ref{cor:dimension.xcplx.smartly.paired}, $\xcplx$ is a cube complex of pure dimension $d=k_A+k_B+2-n$. As above, let $\varSigma_\xcplx$, $\varSigma_{\Gamma_A}$ and $\varSigma_{\Gamma_B}$ be the chains obtained by summing all the top dimensional cubes (simplices). We have to show that if $\varSigma_{\Gamma_A}$ and $\varSigma_{\Gamma_B}$ are cycles then so is $\varSigma_{\xcplx}$.
 
 For every vertex $v\coloneqq(\fka,\fkb)$ in $\xcplx$, by Lemma~\ref{lem:links.in.xcplx} we have 
\[
\lk\bigparen{v,\xcplx}=\lk(\fka,\Gamma_A)*\lk(\fkb,\Gamma_B),
\]
 whence we deduce
 \begin{align*}
  \loc_v\bigparen{\varSigma_\xcplx} &=\varSigma_{\lk\paren{v,\xcplx}} 	\vphantom{\big(} \\
    &=\varSigma_{\paren{ \lk(\fka,\Gamma_A)*\lk(\fkb,\Gamma_B)} }  	\vphantom{\big(}\\
    &=\varSigma_{\lk(\fka,\Gamma_A)}*\varSigma_{\lk(\fkb,\Gamma_B)} 	\vphantom{\big(}\\
    &=\loc_\fka\paren{\varSigma_{\Gamma_A} }*\loc_\fkb\paren{\varSigma_{\Gamma_B} }.
 \end{align*}
 It follows from Lemma~\ref{lem:n.chains.recognised.locally} and Lemma~\ref{lem:joinsofcycles} that if $\varSigma_{\Gamma_A}$ and $\varSigma_{\Gamma_B}$ are cycles then $\varSigma_\xcplx$ is a cycle as well.
 
 Conversely, assume now that $\Gamma_A$ and $\Gamma_B$ are doubly smartly paired and that $\varSigma_{\xcplx}$ is a cycle. For every simplex $\fka\subsetneq\Gamma_A$ of dimension $k_A-1$ there exists a $\fkb\subseteq\Gamma_B$ so that $v\coloneqq(\fka,\fkb)$ is a vertex of $\xcplx$. 
 From the discussion above, it follows that $\loc_\fka\paren{\varSigma_{\Gamma_A} }*\loc_\fkb\paren{\varSigma_{\Gamma_B} }$
 is a cycle.
 
 Neither $\loc_\fka\paren{\varSigma_{\Gamma_A} }$ nor $\loc_\fkb\paren{\varSigma_{\Gamma_B} }$ is a trivial chain (even if $\lk(\fkb,\Gamma_B)$ was empty we would still get a non\=/trivial $(-1)$\=/chain). \oldnote{I should check that everything we do is coherent when we have empty complexes and $-1$-chains lurking around}
 It hence follows from Lemma~\ref{lem:joinsofcycles} that $\loc_\fka\paren{\varSigma_{\Gamma_A} }$ is a cycle. Since this is the case for every $(k_A-1)$\=/simplex $\fka$ in $\Gamma_A$, we deduce that $\varSigma_A$ is a cycle by Lemma~\ref{lem:n.chains.recognised.locally}. The same argument shows that $\varSigma_B$ is a cycle as well.
\end{proof}

Together with Corollary~\ref{cor:dimension.xcplx.smartly.paired}, we obtain the following: 

\begin{cor}
 Let $\Gamma_A$ and $\Gamma_B$ be smartly paired $n$\=/coloured simplicial complexes of pure dimension $k_A$ and $k_B$ respectively. If $\Gamma_A$ and $\Gamma_B$ have a fundamental class then $\pi_1(\xcplx)$ has homological dimension $k_A+k_B+2-n$. 
\end{cor}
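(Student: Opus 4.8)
The plan is to obtain the stated value of the homological dimension as a short combination of the two preceding lemmas with the standard facts about $(\text{co})$homological dimension recalled in the preliminaries. Throughout, write $d\coloneqq k_A+k_B+2-n$ and $G\coloneqq\pi_1\bigparen{\xcplx}$. Note that, since $\Gamma_A$ and $\Gamma_B$ are flag, Corollary~\ref{cor:flag.implies.cat0.CLCC} makes $\xcplx$ a non\=/positively curved cube complex, hence aspherical; together with Lemma~\ref{lem:dimension.xcplx.smartly.paired}, which says $\xcplx$ has pure dimension $d$, this exhibits $\xcplx$ as a finite $d$\=/dimensional $K(G,1)$.

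First I would prove the upper bound. Since $G$ admits a $d$\=/dimensional classifying space, $cd(G)\leq d$, and therefore $hd(G)\leq cd(G)\leq d$. For the lower bound, apply Lemma~\ref{lem:fundamental.class.smartly.paired.complexes}: because $\Gamma_A$ and $\Gamma_B$ have a fundamental class, so does $\xcplx$, which by definition means $H_d\bigparen{\xcplx;\ZZ_2}\neq 0$. As $\xcplx$ is a $K(G,1)$ we have $H_d\bigparen{G;\ZZ_2}\cong H_d\bigparen{\xcplx;\ZZ_2}\neq 0$, and since $H_d(G;\ZZ_2)=\mathrm{Tor}^{\ZZ G}_d(\ZZ,\ZZ_2)$, the existence of a nonzero degree\=/$d$ homology group with coefficients in the $\ZZ G$\=/module $\ZZ_2$ forces $hd(G)\geq d$, as observed in the preliminaries. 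Combining the two inequalities gives $hd(G)=d=k_A+k_B+2-n$.

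I do not expect any real obstacle: the genuinely substantive step — that a fundamental class of $\xcplx$ is assembled locally from fundamental classes of $\Gamma_A$ and $\Gamma_B$ through the join formula $\lk\bigparen{v,\xcplx}=\lk(\fka,\Gamma_A)\ast\lk(\fkb,\Gamma_B)$ — has already been carried out in Lemma~\ref{lem:fundamental.class.smartly.paired.complexes}. The only two points warranting a word of care are that flagness is what licenses the asphericity of $\xcplx$ used in the upper bound, and that the failure of $\ZZ_2$ to be a flat $\ZZ G$\=/module is irrelevant here, since detecting homological dimension only requires some coefficient module realising the top homology, not a flat one.
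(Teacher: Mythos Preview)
The paper states this corollary without proof, treating it as immediate from Lemma~\ref{lem:fundamental.class.smartly.paired.complexes} and Lemma~\ref{lem:dimension.xcplx.smartly.paired}. Your argument is the natural way to spell it out, and the logic is sound once your hypotheses are in place.

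There is one genuine gap, however: you assert ``since $\Gamma_A$ and $\Gamma_B$ are flag'' and use this to get asphericity of $\xcplx$, but flagness is \emph{not} among the hypotheses of the corollary as stated. Without it, $\xcplx$ need not be non\=/positively curved, so it need not be a $K(G,1)$; then the identification $H_d(G;\ZZ_2)\cong H_d(\xcplx;\ZZ_2)$ fails and both your upper bound (via $cd(G)\leq\dim K(G,1)$) and your lower bound collapse. You have two honest ways to repair this. Either (i) read the corollary as implicitly assuming flagness\textemdash which is consistent with the paper's focus and with how the preliminaries set things up\textemdash and say so explicitly; or (ii) interpret ``$\xcplx$ has homological dimension $d$'' as a statement about the complex rather than the group, namely that $\xcplx$ is $d$\=/dimensional with $H_d(\xcplx;\ZZ_2)\neq 0$, which follows directly from the two lemmas with no extra hypothesis. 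In either reading your core idea is right; you just need to align the assumptions with the statement you are proving.
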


To prove a more refined result we need to give another definition. The \emph{support} of a $k$\=/chain $\varOmega$ in a $\Delta$\=/complex is the subcomplex given by the union of the $k$\=/simplices appearing with non\=/trivial coefficient in $\varOmega$.

\begin{de}\label{def:smartlypairedchains}
 Let $\Gamma_A$ and $\Gamma_B$ be $n$\=/coloured simplicial complexes. Given a $d_A$\=/chain $\varOmega_A\in H_{d_A}\paren{\Gamma_A;\ZZ_2}$ and a $d_B$\=/chain $\varOmega_B\in H_{d_B}\paren{\Gamma_B;\ZZ_2}$, we say that $\varOmega_A$ and $\varOmega_B$ are \emph{smartly paired chains} if every $d_A$\=/simplex in the support of $\varOmega_A$ has a complementary simplex in the support of $\varOmega_B$ and, vice versa, every $d_B$\=/simplex in the support of $\varOmega_B$ has a complementary simplex in the support of $\varOmega_A$.
\end{de}

\begin{thm}\label{thm:chains.to.chains,cycles.to.cycles,amen}
 If $\varOmega_A$ and $\varOmega_B$ are smartly paired $d_A$- and $d_B$\=/chains in two $n$\=/coloured simplicial complexes $\Gamma_A$ and $\Gamma_B$, then they define a $(d_A+d_B+2-n)$\=/chain $\BX(\varOmega_A,\varOmega_B)$ in $\xcplx$. If $\varOmega_A$ and $\varOmega_B$ are cycles also $\BX(\varOmega_A,\varOmega_B)$ is a cycle. 
 
 Moreover, if $\BX(\varOmega_A,\varOmega_B)$ is a boundary, then for every vertex $(\fka,\fkb)$in $\xcplx$ the chain $\loc_\fka\paren{\varOmega_A}\ast\loc_\fkb\paren{\varOmega_B}$ must be a boundary in $\lk(\fka,\Gamma_A)\ast\lk(\fkb,\Gamma_B)$.\oldnote{it is quite possible that at least one between $\varOmega_A$ and $\varOmega_B$ must be a boundary itself, but I don't quite know how to prove it}
\end{thm}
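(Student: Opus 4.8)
The plan is to mimic the proof of Lemma~\ref{lem:fundamental.class.smartly.paired.complexes}, replacing the fundamental classes by arbitrary chains. First I would make the construction explicit. Writing $\varOmega_A=\sum_\sigma\alpha_\sigma\sigma$ and $\varOmega_B=\sum_\tau\beta_\tau\tau$ as formal sums over the $d_A$\=/ and $d_B$\=/simplices of $\Gamma_A$ and $\Gamma_B$, set
\[
\BX(\varOmega_A,\varOmega_B)\coloneqq\sum_{(\sigma,\tau)}\alpha_\sigma\beta_\tau\,Q(\sigma,\tau),
\]
the sum ranging over the complementary pairs $(\sigma,\tau)$ (i.e. those whose vertices together meet every coordinate $1,\dots,n$). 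For such a pair $\sigma$ and $\tau$ overlap in exactly $d_A+d_B+2-n$ coordinates, so $Q(\sigma,\tau)$ is a well\=/defined cube of $\xcplx$ of dimension $d_A+d_B+2-n$, and the expression is bilinear in $(\varOmega_A,\varOmega_B)$. When $\varOmega_A$ and $\varOmega_B$ are the fundamental classes this recovers the chain $\varSigma_\xcplx=\BX(\varSigma_{\Gamma_A},\varSigma_{\Gamma_B})$ from the proof of Lemma~\ref{lem:fundamental.class.smartly.paired.complexes}. Note that distinct pairs $(\sigma,\tau)$ yield cubes with distinct pairs of extremal vertices, so there is no cancellation in the sum; in particular the smartly paired hypothesis guarantees that $\BX(\varOmega_A,\varOmega_B)$ is non\=/trivial whenever $\varOmega_A$ and $\varOmega_B$ are.

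The crux is the following localisation identity: for every vertex $v=(\fka,\fkb)$ of $\xcplx$,
\[
\loc_v\bigparen{\BX(\varOmega_A,\varOmega_B)}=\loc_\fka(\varOmega_A)\ast\loc_\fkb(\varOmega_B),
\]
an equality of $(d_A+d_B+1-n)$\=/chains in $\lk(\fka,\Gamma_A)\ast\lk(\fkb,\Gamma_B)=\lk(v,\xcplx)$, the right\=/hand side being the join of chains defined just before Lemma~\ref{lem:joinsofcycles} (the dimensions match because $\fka$ and $\fkb$ are complementary, so $\dim\fka+\dim\fkb=n-2$). To prove it I would run the explicit combinatorial isomorphism from the proof of Lemma~\ref{lem:links.in.xcplx}: a simplex of $\lk(v,\xcplx)$ of the relevant dimension corresponds to a $(d_A+d_B+2-n)$\=/cube of $\xcplx$ containing $v$, such a cube has the form $Q(\fka',\fkb')$ with $\fka\subseteq\fka'\in\Gamma_A$ and $\fkb\subseteq\fkb'\in\Gamma_B$, and under the identification $\lk(v,\xcplx)=\lk(\fka,\Gamma_A)\ast\lk(\fkb,\Gamma_B)$ this simplex is precisely the join of the simplex of $\lk(\fka,\Gamma_A)$ determined by $\fka'$ with the simplex of $\lk(\fkb,\Gamma_B)$ determined by $\fkb'$. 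The coefficient of this simplex in $\loc_v\bigparen{\BX(\varOmega_A,\varOmega_B)}$ is the coefficient of $Q(\fka',\fkb')$ in $\BX(\varOmega_A,\varOmega_B)$, which equals $\alpha_{\fka'}\beta_{\fkb'}$ when $\dim\fka'=d_A$ and $\dim\fkb'=d_B$ and is $0$ otherwise (a cube has a unique pair of extremal vertices); comparing this with the coefficient on the right\=/hand side yields the identity. The reduced\=/homology conventions take care of the degenerate cases $\fka'=\fka$ or $\fkb'=\fkb$, exactly as in the proof of Lemma~\ref{lem:fundamental.class.smartly.paired.complexes}.

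Granting the identity, both assertions follow formally. Since $\loc_\fka$ and $\loc_\fkb$ commute with the differentials, if $\varOmega_A$ and $\varOmega_B$ are cycles then so are $\loc_\fka(\varOmega_A)$ and $\loc_\fkb(\varOmega_B)$; hence by Lemma~\ref{lem:joinsofcycles} the join $\loc_\fka(\varOmega_A)\ast\loc_\fkb(\varOmega_B)$ is a cycle whenever both factors are non\=/trivial, and it is trivially a cycle otherwise. Thus $\loc_v\bigparen{\BX(\varOmega_A,\varOmega_B)}$ is a cycle for every vertex $v$, and Lemma~\ref{lem:n.chains.recognised.locally} (applied with $k=0$, assuming $d_A+d_B+2-n\geq 1$; the remaining case $d_A+d_B+2-n=0$ is checked directly using that $\varOmega_A$ and $\varOmega_B$ are reduced cycles) shows that $\BX(\varOmega_A,\varOmega_B)$ is a cycle. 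For the ``moreover'': if $\BX(\varOmega_A,\varOmega_B)=d\Theta$ for some chain $\Theta$ of $\xcplx$, then using the identity and that $\loc_v$ commutes with differentials, $\loc_\fka(\varOmega_A)\ast\loc_\fkb(\varOmega_B)=\loc_v\bigparen{\BX(\varOmega_A,\varOmega_B)}=\loc_v(d\Theta)=d\bigparen{\loc_v\Theta}$ is a boundary in $\lk(\fka,\Gamma_A)\ast\lk(\fkb,\Gamma_B)$.

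I expect the only genuine work to be the localisation identity, or rather the bookkeeping behind it: one has to follow the combinatorial correspondence of Lemma~\ref{lem:links.in.xcplx} carefully enough to see that the join of chains on the link side matches the localisation on the cube side term by term\textemdash in particular that the coefficient attached to a link simplex is genuinely a product $\alpha_\sigma\beta_\tau$ with $\sigma$ of dimension exactly $d_A$ and $\tau$ of dimension exactly $d_B$\textemdash and to stay consistent about the reduced\=/homology conventions when $\fka$ or $\fkb$ is already a top\=/dimensional simplex. Once that identity is in place, everything else is a routine combination of Lemma~\ref{lem:n.chains.recognised.locally} and Lemma~\ref{lem:joinsofcycles}, in close parallel with the proof of Lemma~\ref{lem:fundamental.class.smartly.paired.complexes}.
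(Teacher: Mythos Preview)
Your proposal is correct and essentially follows the paper's approach. The only minor difference is packaging: the paper defines $\BX(\varOmega_A,\varOmega_B)$ by passing to the support complexes $\chi_A\subseteq\Gamma_A$, $\chi_B\subseteq\Gamma_B$, observing that $\varOmega_A=\varSigma_{\chi_A}$ and $\varOmega_B=\varSigma_{\chi_B}$ are their fundamental chains, and then pushing $\varSigma_{\BX_{\chi_A,\chi_B}}$ forward along the functorial inclusion $\BX_{\chi_A,\chi_B}\hookrightarrow\xcplx$\textemdash this lets the first assertion be read off directly from the already\=/proved Lemma~\ref{lem:fundamental.class.smartly.paired.complexes}. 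You instead write down the explicit formula $\sum\alpha_\sigma\beta_\tau\,Q(\sigma,\tau)$ (which of course agrees with the paper's definition over $\ZZ_2$) and prove both assertions via the localisation identity; but since the proof of Lemma~\ref{lem:fundamental.class.smartly.paired.complexes} \emph{is} that localisation identity, you are simply inlining it rather than invoking it. For the ``moreover'' clause both proofs are identical: state the localisation identity and apply Lemma~\ref{lem:n.chains.recognised.locally}.
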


\begin{proof}
 Let $\chi_A\subseteq\Gamma_A$ and $\chi_B\subseteq\Gamma_B$ be the supports of $\varOmega_A$ and $\varOmega_B$ respectively. By the definition we have that $\chi_A$ is a simplicial complex of pure dimension $d_A$ and that $\varOmega_A$ coincides with the $d_A$\=/chain $\varSigma_{\chi_A}$ given by the sum of all the top dimensional simplices. The same goes for $\chi_B$ as well, and we also have that the two $n$\=/coloured complexes $\chi_A$ and $\chi_B$ are smartly paired because so are $\varOmega_A$ and $\varOmega_B$. It follows from Corollary~\ref{cor:dimension.xcplx.smartly.paired} that $\BX_{\chi_A,\chi_B}$ is a cube complex of pure dimension $d_A+d_B+2-n$ and hence the sum of its top dimensional cubes gives us a $(d_A+d_B+2-n)$\=/chain $\Sigma_{\BX_{\chi_A,\chi_B}}$ in $\BX_{\chi_A,\chi_B}$. By functoriality, we have a natural inclusion of $\BX_{\chi_A,\chi_B}$ into $\xcplx$. We define $\BX(\varOmega_A,\varOmega_B)$ to be the image of $\Sigma_{\BX_{\chi_A,\chi_B}}$ under this inclusion.
 
 Since the inclusion is an injective cubical map, $\BX\paren{\varOmega_A,\varOmega_B}$ is a cycle if and only if $\varSigma_{\BX_{\chi_A,\chi_B}}$ is. By Lemma~\ref{lem:fundamental.class.smartly.paired.complexes} if $\varSigma_{\chi_A}$ and $\varSigma_{\chi_B}$ (equivalently, $\varOmega_A$ and $\varOmega_B$) are cycles, then this is indeed the case. This concludes the first part of the proof.
 
 For the second part of the statement, unravelling the definition it turns out that for every vertex $v=(\fka,\fkb)\in\xcplx$ the localisation at $v$ of $\BX\paren{\varOmega_A,\varOmega_B}$ is given by
 \[
  \loc_v\bigparen{\BX\paren{\varOmega_A,\varOmega_B}}=\loc_\fka\paren{\varOmega_A}\ast \loc_\fkb\paren{\varOmega_B}.
 \]
 Then the statement follows from Lemma~\ref{lem:n.chains.recognised.locally}.
\end{proof}

\begin{rmk}\label{rmk:question on homology}
 It would be interesting to see to what extent the results of this subsection generalise to homology with coefficients other than $\ZZ_2$. This would be very useful to study topological properties of CLCCs, \emph{e.g.} to prove orientability.
 
 The reason why it is fairly simple to study the homology with $\ZZ_2$\=/coefficients is that the cellular chain complex of a cube complex behaves like the simplicial chain complex of a simplicial complex.
 This suggest that it may be possible to study homology groups with other coefficients by first developing a theory of simplicial cubical homology.
\end{rmk}

\section{Examples}
\label{sec:Examples}

In this section we provide some concrete examples of cube complexes obtainable as coupled link cube complexes.

\subsection{Surfaces}\label{ssec:examples.surfaces}
For any two integers $k_A>1$, $k_B>1$, let $\Gamma_A$ and $\Gamma_B$ be the graphs consisting of a cycle of length $2k_A$ and $2k_B$ respectively. Choosing a $2$\=/colouring on each cycle, we can construct a coupled link square complex $\xcplx$ and we can use Lemma \ref{lem:links.in.xcplx} to compute the links of the vertices of $\xcplx$. A vertex $v\in\xcplx$ can be of three types:
\begin{enumerate}[(1)]
 \item $v=(\fka,\emptyset)$ and therefore $\lk(v,\xcplx)=\Gamma_B$;
 \item $v=(\fka,\fkb)$ and therefore $\lk(v,\xcplx)=\SS^0\ast \SS^0$;
 \item $v=(\emptyset,\fkb)$ and therefore $\lk(v,\xcplx)=\Gamma_A$.
\end{enumerate}
In either case the link is a circle, therefore $\xcplx$ is the cubulation of a surface (see also Fact~\ref{fact:cc.with.spheres.as.links.are.mflds} below). Since both $\Gamma_A$ and $\Gamma_B$ have simplices with a vertex in every coordinate, it follows from Corollary~\ref{cor:max dim pure implies connected} that the surface thus obtained is connected.

It is not very hard to verify that these surfaces are orientable. One way to see this is as follows. Numerate the vertices of $\Gamma_A,\Gamma_B$, then each edge in these cycles will connect an even number $2i$ with $2i+1$ or $2i-1$. In the first case we say that the edge is \emph{ascending}, in the latter it is \emph{descending}.  Each square in $\xcplx$ is equal $Q(\fka,\fkb)$ for appropriate edges $\fka$, $\fkb$ in $\Gamma_A$, $\Gamma_B$. Orient such a square clockwise if $\fka$ and $\fkb$ are of the same type (both ascending or both descending), anti\=/clockwise otherwise. These local orientations are coherent and define a global orientation on $\xcplx$.

Since every edge of $\xcplx$ is contained in two squares and every square has four vertices, we deduce that the Euler characteristic of $\xcplx$ is 
\[
 \chi\bigparen{\xcplx}=\frac{1}{4}\smashoperator[r]{\sum_{v\in\xcplx}}\bigparen{4- \deg(v)}
\]
where $\deg(v)$ is the number of edges containing the vertex $v$.

It follows that $\chi\bigparen{\xcplx}=-k_A(k_B-2)-k_B(k_A-2)$ and therefore for every $g\geq 1$ the surface $\Sigma_g$ of genus $g$ can be obtained as a coupled link square complex  \emph{e.g.} by letting $k_A=2$ and $k_B=g+1$.

\subsection{PL Manifolds}\label{ssec:examples.manifolds}
CLCCs can be used to construct piecewise\=/linear manifolds (PL manifolds). Recall that a {\em PL triangulation} of an $n$\=/dimensional manifold $M$ is a triangulation so that the link each vertex is a PL sphere of one dimension $n-1$ (this is an inductive definition). A {\em PL manifold} is a manifold that can be equipped with a PL triangulation. 
In the following, we use the term triangulations of PL manifolds to mean PL triangulations.
We will make use of the following facts to prove that some CLCCs are (cubulated) PL manifolds:

\begin{fact}\label{fact:cc.with.spheres.as.links.are.mflds}
  Let $X$ be an $n$\=/dimensional finite cube complex. If $\lk(v,X)$ is PL homeomorphic to $\SS^{n-1}$ for every vertex $v\in X^{(0)}$, then $X$ is a closed PL manifold.
\end{fact}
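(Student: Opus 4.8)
The plan is to upgrade the hypothesis ``every \emph{vertex} link is a sphere'' to ``every \emph{point} link is a sphere'', and then read off charts. First I would recall the local structure of a cube complex: for a point $x\in X$ lying in the interior of its smallest carrying cube $c$, with $k=\dim(c)$, a neighbourhood of $x$ in $X$ is PL-homeomorphic to the open cone over $\lk(x,X)$, and by the convention for links of points fixed in Section~\ref{sec:prelims} one has $\lk(x,X)=\lk(c,X)\ast\SS^{k-1}$. Since the open cone over $\SS^{n-1}$ is $\RR^n$, it therefore suffices to prove that $\lk(c,X)$ is PL-homeomorphic to $\SS^{n-k-1}$ for every $k$-cube $c$ of $X$: granting this, $\lk(x,X)\cong\SS^{n-k-1}\ast\SS^{k-1}\cong\SS^{n-1}$, so $x$ acquires a PL chart onto $\RR^n$.

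To identify $\lk(c,X)$, I would fix a vertex $v$ of $c$. The cube $c$ determines a $(k-1)$-simplex $\sigma_c$ in $\lk(v,X)$, and the standard ``link of a link'' identity gives $\lk(c,X)=\lk\bigparen{\sigma_c,\lk(v,X)}$. Now the hypothesis says $\lk(v,X)$ is PL-homeomorphic to $\SS^{n-1}$, i.e. its given simplicial structure is a combinatorial $(n-1)$-sphere; since the link of a simplex in a combinatorial sphere is again a combinatorial sphere of the expected dimension, $\lk\bigparen{\sigma_c,\lk(v,X)}$ is a combinatorial sphere of dimension $(n-1)-(k-1)-1=n-k-1$, as wanted. (Alternatively one can run a short induction on $\dim(c)$, passing from a cube to its codimension-one faces and using only the special case that vertex links of combinatorial spheres are combinatorial spheres; the base case $\dim(c)=0$ is precisely the hypothesis. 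Note also that purity of $X$, i.e. that every cube lies in an $n$-cube, comes for free since a triangulated sphere is pure.)

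Finally I would assemble the conclusion: the charts produced above exhibit $X$ as a PL $n$-manifold with empty boundary, and since $X$ is a finite cube complex it is compact, hence closed.

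I expect the only genuinely delicate point to be the insistence on the \emph{PL} (equivalently, combinatorial) hypothesis rather than a merely topological one: the argument uses crucially that the links of the faces of $\lk(v,X)$ are spheres, which can fail for a simplicial complex that is only \emph{homeomorphic} to $\SS^{n-1}$ (double-suspension phenomena). For the purely topological version of the statement one would instead have to show directly that each $\lk(c,X)$ is a topological $(n-k-1)$-sphere — for instance via a local homology computation at the points of $X$ together with a simple-connectivity argument — which is where the real work would lie.
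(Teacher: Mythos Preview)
The paper does not prove this statement: it is recorded as a \emph{Fact} (alongside Fact~\ref{fact:links.in.mflds.are.spheres}) and treated as standard PL background. The only hint the paper gives toward an argument is the Remark immediately following, which explains the ``link of a link'' induction $\lk\bigparen{(v,w),X}=\lk\bigparen{w,\lk(v,X)}$---and that remark is phrased for Fact~\ref{fact:links.in.mflds.are.spheres}, not for the present statement.

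Your outline is correct and is exactly the standard route: pass from vertex links to cube links via the identification $\lk(c,X)=\lk\bigparen{\sigma_c,\lk(v,X)}$, use that links of simplices in combinatorial spheres are combinatorial spheres of the expected dimension, and then recover a Euclidean chart at every point from $\lk(x,X)=\lk(c,X)\ast\SS^{k-1}\cong\SS^{n-1}$. Your parenthetical induction on $\dim(c)$ is precisely the mechanism the paper's Remark sketches. The caveat you raise about the PL hypothesis (double suspensions, etc.) is also well placed.

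One small point: the statement as written does not assume $X$ is finite, so your concluding sentence ``since $X$ is a finite cube complex it is compact, hence closed'' imports an assumption not present in the hypothesis. In the paper's applications (the CLCCs $\xcplx$ of Section~\ref{sec:Examples}) the complexes are indeed finite, so this is harmless there; but as a proof of the Fact as stated you should either read ``closed'' as ``without boundary'' or add finiteness explicitly.
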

 
\begin{fact}\label{fact:links.in.mflds.are.spheres}
  If $X$ is a triangulated $n$\=/dimensional closed PL manifold, then for every $k$\=/dimensional simplex $\sigma\subsetneq X$ with $k<n$ the link $\lk(\sigma,X)$ is a triangulated sphere of dimension $n-k-1$.
\end{fact}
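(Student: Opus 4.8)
The plan is to prove this by induction on $k$, after agreeing to read ``triangulated manifold'' in the piecewise\=/linear sense --- that is, $X$ is a \emph{combinatorial $n$\=/manifold}: a simplicial complex in which the link of every vertex is a combinatorial $(n-1)$\=/sphere, where a combinatorial $m$\=/sphere means a complex PL\=/homeomorphic to $\partial\Delta^{m+1}$. This restriction is essential rather than cosmetic: by the double suspension theorem there exist triangulations of a topological $5$\=/manifold in which the link of some vertex is the suspension of a homology $3$\=/sphere, and hence not even a topological sphere, so the statement is simply false for arbitrary triangulations. In the combinatorial category, on the other hand, the base case $k=0$ is nothing but the defining property of a combinatorial manifold --- equivalently, it is the PL counterpart of the converse of Fact~\ref{fact:cc.with.spheres.as.links.are.mflds} --- and this is where I would start.

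For the inductive step I would prove the uniform statement $P(k)$: \emph{for every combinatorial manifold $M$ of arbitrary dimension $m$ and every $k$\=/simplex $\rho\subset M$ with $k<m$, the link $\lk(\rho,M)$ is a combinatorial $(m-k-1)$\=/sphere}. Granting $P(k-1)$, let $\sigma\subset X$ be a $k$\=/simplex with $1\leq k<n$, choose a vertex $v\in\sigma$, and let $\tau$ be the opposite $(k-1)$\=/face, so $\sigma=v\ast\tau$ with $v\notin\tau$. I would then invoke the elementary identity for iterated links in a simplicial complex,
\[
\lk(\sigma,X)=\lk\bigparen{v\ast\tau,X}=\lk\bigparen{\tau,\lk(v,X)},
\]
valid whenever $v$ and $\tau$ are disjoint and span a simplex of $X$. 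By the base case, $L\coloneqq\lk(v,X)$ is a combinatorial $(n-1)$\=/sphere, in particular a combinatorial $(n-1)$\=/manifold; since $\tau$ is a $(k-1)$\=/simplex of $L$ with $k-1<n-1$, the hypothesis $P(k-1)$ gives that $\lk(\tau,L)$ is a combinatorial sphere of dimension $(n-1)-(k-1)-1=n-k-1$. Through the displayed identity this is $\lk(\sigma,X)$, which closes the induction.

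The parts of this that are not purely formal are two standard facts of PL topology that I would quote rather than prove: that a combinatorial sphere is a combinatorial manifold (immediate for $\partial\Delta^{m+1}$, then transported along PL\=/homeomorphisms), and --- the more delicate one --- that being a combinatorial manifold is invariant under subdivision, so that the notion is well defined on PL\=/homeomorphism classes (see, e.g., the standard references of Rourke--Sanderson or Hudson). The main obstacle here is therefore not the induction, which is two lines, but the foundational bookkeeping: pinning down exactly which definition of triangulated manifold is intended and which prior PL results are being imported; once that is settled, the argument above completes the proof.
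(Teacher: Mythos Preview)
Your proof is correct and follows essentially the same route as the paper: the paper does not give a formal proof of this Fact, but the remark immediately after it sketches exactly your induction --- reduce to the case of vertices, then use the identity $\lk\bigparen{v\ast\tau,X}=\lk\bigparen{\tau,\lk(v,X)}$ to step up. Your write-up is more careful about the PL foundations (the double-suspension caveat and the invariance of ``combinatorial manifold'' under subdivision), which is a genuine improvement in rigor over the paper's one-line remark, but the mathematical content is the same.
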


Both facts are fairly elementary. For a proof, see resp. Lemma 9 and the subsequent Corollary 1 in \cite[Chapter 3]{zeeman1966seminar}.\footnote{%
In \cite[Chapter 3]{zeeman1966seminar} the word ``manifold'' is short for polyhedral manifold. This concept is slightly more general than PL manifold. However, the same proof holds for PL manifolds as well.
}

\

If two $n$\=/coloured simplicial complexes $\Gamma_A$ and $\Gamma_B$ are triangulations of PL spheres, it follows from Fact~\ref{fact:links.in.mflds.are.spheres} and Lemma~\ref{lem:links.in.xcplx} that the link of every vertex in $\xcplx$ is a join of spheres and it is hence itself a sphere. It then follows from Fact~\ref{fact:cc.with.spheres.as.links.are.mflds} that $\xcplx$ is a closed PL manifold (when it is not empty). 

Thus, we obtain the following: 
\begin{prop}\label{prop:manifoldsifspheres}
	Let $\Gamma_A$ and $\Gamma_B$ be $n$\=/coloured PL triangulations of spheres. Then $\xcplx$ is a PL manifold. 
\end{prop}

If $\Gamma_B$ is a sphere of dimension strictly less than $n-1$ then the same argument implies that $\xcplx$ is a closed PL manifold also when $\Gamma_A$ is a PL triangulation of any PL manifold (not necessarily a sphere). Indeed, every vertex $v$ in $\xcplx$ will have at least one $A$\=/coordinate and hence the contribution to the $A$\=/coordinates of the link at $v$ will be given by the link of a non\=/empty simplex of $\Gamma_A$, which is again a sphere. Similarly, we do not even need to assume $\Gamma_B$ to be a sphere as long as $\Gamma_A$ does not have dimension $n-1$.

For a more concrete example, we can let $\Gamma_A=\Ast_{i=1}^n \SS^0$ with the standard $n$\=/colouring assigning to the $i$\=/th $\SS^0$ the colour $i$. Then, letting $\Gamma_B$ be any $n$\=/coloured sphere, the resulting CLCC $\xcplx$ will be a non\=/empty PL manifold of dimension $\dim(\xcplx)=\dim(\Gamma_B)+1$. Note that $\dim(\xcplx)$ does not depend on the colouring of $\Gamma_B$, however, its homeomorphism class does. For example, let $\Gamma_A=\SS^0\ast\SS^0\ast\SS^0$ and let $\Gamma_B$ be a $3$\=/coloured circle of length $6$. If the chosen $3$\=/colouring cycles through the three colours it turns out that $\xcplx$ is the surface of genus $3$. Conversely, if we choose the $3$\=/colouring that is only alternating two colours (\emph{i.e.} we completely ignore one of the colours), we will get a disjoint union of two surfaces of genus $2$. 

We do not kow the answer to the following questions.

\begin{question}
 If a CLCC is a PL manifold is it always orientable? 
\end{question}

We suspect that the answer should be positive. One way to approach to this question could be to develop the study of homology groups of CLCC with $\ZZ$ coefficients (as outlined in Remark~\ref{rmk:question on homology}). Both problems seem to share a certain level of difficulty in coherently choosing local orientations.

\begin{question}
 If a CLCC is a PL manifold is it always smoothable? (That is, does there need to be a compatible differential structure?) If not, is it possible to characterise when this happens?  
\end{question}

\subsection{Pseudo\=/manifolds}
\label{ssec:examples.pseudomanifolds}
A \emph{$k$\=/dimensional pseudo\=/manifold} is a simplicial complex of pure dimension $k$ such that
\begin{itemize}
 \item every $(k-1)$\=/simplex belongs to exactly two $k$\=/simplices (\emph{non\=/branching}),
 \item for every pair of $k$\=/simplices $\sigma,\sigma'$ there is a sequence of $k$\=/simplices $\sigma=\sigma_1,\ldots,\sigma_l=\sigma'$ such that $\sigma_i$ and $\sigma_{i+1}$ share a $(k-1)$\=/dimensional simplex (\emph{strongly connected}).
\end{itemize}

We will soon see that coupled link cube complexes can be used to construct pseudo\=/manifolds. In order to do so, it is necessary to subdivide cubes into simplices in order to realise the cube complex as a simplicial complex. One convenient way for doing this is to take the \emph{simplicial barycentric subdivision}. Namely, the cube complex $X$ is made into a simplicial complex $\Lambda$ whose vertex set is equal to the set of cubes of $X$ (each vertex correpond to the barycentre of the associated cube), and whose simplices correspond to chains of nested cubes of increasing dimension. This way, each edge is split into two edges, each square is split into eight triangles and so on. In order for $\Lambda$ to be a pseudo\=/manifold it is enough that $X$ satisfies the cubical analogues of non-branching and strong connectedness.

\begin{lem}
 Let $\Gamma_A$ and $\Gamma_B$ be smartly paired $n$\=/coloured $(n-1)$\=/dimensional pseudo\=/manifolds. Then the simplicial barycentric subdivision of $\xcplx$ is a pseudo\=/manifold.
\end{lem}
\begin{proof}
 The cube complex $\xcplx$ has pure dimension $n$. We need to show that it satisfies the cubical analogues of non-branching and strong connectedness.

 Let $Q(\fka,\fkb)$ be a $(n-1)$\=/cube. We can assume that $\fka$ is a $(n-2)$\=/simplex and $\fkb$ is a $(n-1)$\=/simplex. Since $\Gamma_A$ is a pseudo\=/manifold, $\fka$ is contained in exactly two $(n-1)$\=/simplices $\fka'$, $\fka''$. In turn, $Q(\fka,\fkb)$ is only contained in $Q(\fka',\fkb)$ and $Q(\fka'',\fkb)$. This proves that $\xcplx$ is non\=/branching.
 
 Recall that $n$\=/dimensional cubes of $\xcplx$ correpond to pairs of $(n-1)$\=/dimensional simplices of $\Gamma_A$, $\Gamma_B$.
 Let $Q(\fka,\fkb)$ and $Q(\fka',\fkb)$ be two $n$\=/cubes such that $\fka\cap\fka'$ is $(n-2)$\=/dimensional. Note that $Q(\fka,\fkb)\cap Q(\fka',\fkb)= Q(\fka\cap\fka',\fkb)$. The assumption that $\Gamma_B$ is $n$\=/coloured implies that $Q(\fka\cap\fka',\fkb)$ is a non\=/empty $(n-1)$\=/cube.
 
 Since $\Gamma_A$ is strongly connected, it follows that any two $n$\=/cubes of the form $Q(\fka,\fkb)$ and $Q(\fka',\fkb)$ are connected by an appropriate sequence in $\xcplx$. The same goes for pairs of $n$\=/cubes of the form $Q(\fka,\fkb)$ and $Q(\fka,\fkb')$, because $\Gamma_B$ is strongly connected. It follows that $\xcplx$ is strongly connected.
\end{proof}

\begin{rmk}
 The assumption that the $(n-1)$\=/dimensional pseudo\=/manifolds be $n$\=/coloured was used to ensure that $\xcplx$ is strongly connected. One could relax this assumption, \emph{e.g.} by defining an appropriate notion of pairwise strongly connected simplicial complexes.
\end{rmk}

\subsection{Products of CLCCs}
It is easy to observe that products of coupled link cube complexes are themselves coupled link cube complexes.

More precisely, let $\Gamma_A^{1}$, $\Gamma_B^{1}$ be $n$\=/coloured with colours $1,\ldots,n$, and let $\Gamma_A^{2}$, $\Gamma_B^{2}$ be $m$\=/coloured with colours $n+1,\ldots,n+m$. This way the joins $\Gamma_A^{1}\ast\Gamma_A^{2}$, $\Gamma_B^{1}\ast\Gamma_B^{2}$ are $n+m$ coloured simplicial complexes and we have the following.

\begin{lem}\label{lem:product CLCC}
 The CLCC $\BX_{\Gamma_A^{1}\ast\Gamma_A^{2},\Gamma_B^{1}\ast\Gamma_B^{2}}$ is equal to the product $\BX_{\Gamma_A^{1},\Gamma_B^{1}}\times \BX_{\Gamma_A^{2}, \Gamma_B^{2}}$.
\end{lem}
\begin{proof}
 By the definition, a simplex in $\Gamma_A^{1}\ast\Gamma_A^{2}$ is a join of a simplex in $\Gamma_A^{1}$ with a simplex in $\Gamma_A^{2}$. The same goes for $\Gamma_B^{1}\ast\Gamma_B^{2}$. Two simplices $\fka^1\ast\fka^2\subseteq\Gamma_A^1\ast\Gamma_A^2$ and $\fkb^1\ast\fkb^2\subseteq\Gamma_B^1\ast\Gamma_B^2$ are complementary for $\Gamma_A^{1}\ast\Gamma_A^{2},\Gamma_B^{1}\ast\Gamma_B^{2}$ if and only if $\fka^1$, $\fkb^1$ are complementary for $\Gamma_A^1$, $\Gamma_B^1$ and $\fka^2$, $\fkb^2$ are complementary for $\Gamma_A^2$, $\Gamma_B^2$. This shows that there is a one\=/to\=/one correspondence between the vertices of $\BX_{\Gamma_A^{1}\ast\Gamma_A^{2},\Gamma_B^{1}\ast\Gamma_B^{2}}$ and those of the product $\BX_{\Gamma_A^{1},\Gamma_B^{1}}\times \BX_{\Gamma_A^{2}, \Gamma_B^{2}}$.

 Edges are defined by flipping one coordinate at the time. In particular, every edge in $\BX_{\Gamma_A^{1}\ast\Gamma_A^{2},\Gamma_B^{1}\ast\Gamma_B^{2}}$ determines an edge in $\BX_{\Gamma_A^{1},\Gamma_B^{1}}$ or $\BX_{\Gamma_A^{2}, \Gamma_B^{2}}$ depending on whether the coordinate being flipped is in $\{1,\ldots,n\}$ or $\{n+1,\ldots,n+m\}$. Vice versa with any vertex in $\BX_{\Gamma_A^{1}\ast\Gamma_A^{2},\Gamma_B^{1}\ast\Gamma_B^{2}}$ and edge in $\BX_{\Gamma_A^{1},\Gamma_B^{1}}$ or $\BX_{\Gamma_A^{2}, \Gamma_B^{2}}$ is uniquely associated an edge in $\BX_{\Gamma_A^{1}\ast\Gamma_A^{2},\Gamma_B^{1}\ast\Gamma_B^{2}}$. This shows that the correspondence between vertices of $\BX_{\Gamma_A^{1}\ast\Gamma_A^{2},\Gamma_B^{1}\ast\Gamma_B^{2}}$ and $\BX_{\Gamma_A^{1},\Gamma_B^{1}}\times \BX_{\Gamma_A^{2}, \Gamma_B^{2}}$ preserves the edges.
 
 Both cube complexes are determined by their $1$\=/skeleta, as the higher dimensional cubes are just filled in whenever possible. This implies that they are indeed isomorphic. 
\end{proof}

\subsection{Right Angled Artin Groups}\label{ssec:examples.RAAGs}
Let $\Gamma$ be a flag simplicial complex with vertices $v_1,\ldots,v_n$. Recall that the \emph{right angled Artin group} associated with $\Gamma$ is the group 
\[
\CA(\Gamma)\coloneqq \angles{v_1,\ldots, v_n\mid v_iv_j=v_jv_i \text{ if and only if }(v_i,v_j)\text{ is an edge of }\Gamma}.
\]

The \emph{Salvetti complex} of $\Gamma$ can be defined as follows. Let $\TT^n=\SS^1\times\cdots\times\SS^1$ be the $n$ dimensional torus endowed with the natural cell complex structure with a single vertex and ${n \choose k}$ $k$\=/cells. There is a bijection between the 1-cells and the vertices of $\Gamma$. The Salvetti complex\footnote{In the literature, the term `Salvetti complex' is often used to denote the universal cover of the complex ${\rm Sal}(\Gamma)$ here defined.} is defined as the subcomplex ${\rm Sal}(\Gamma)\subseteq \TT^n$ that contains a $k$-cell if and only if the vertices $v_{i_1},\ldots,v_{i_k}$ associated with the $1$\=/cells in its boundary span a simplex of $\Gamma$. It is well known that $\CA(\Gamma)$ is isomorphic to the fundamental group of ${\rm Sal}(\Gamma)$.

For $i=1,\ldots,n$ let $A_i$ and $B_i$ be sets with two elements ($a^+,a^-$ and $b^+,b^-$ respectively). With the notation of Section \ref{sec:complex_X_Gamma}, letting $\BA=\Ast_{i=1}^n A_i$ and $\BB=\Ast_{i=1}^n B_i$ yields the complex $\BK=\prod_{i=1}^n A_i\ast B_i$ which is equal to the cube complex obtained from $\TT^n$ by taking the cubical barycentric subdivision twice. In particular, (a subdivision of) the Salvetti complex is naturally a subcomplex of $\BK$.

Let $\widehat \Gamma$ be the subcomplex of $\BA$ obtained as follow. For each (possibly empty) simplex $[v_{i_1},\ldots,v_{i_k}]\subseteq \Gamma$, add to $\widehat \Gamma$ the $n$\=/dimensional simplex with $a^+$ on the coordinates $i_1,\ldots,i_k$ and $a^-$ on the remaining $n-k$ coordinates. Since $\Gamma$ is flag, $\widehat \Gamma$ is also a flag simplicial complex.

\begin{figure}
    \centering
    \begin{subfigure}{0.2 \textwidth}
	\centering
	\includegraphics{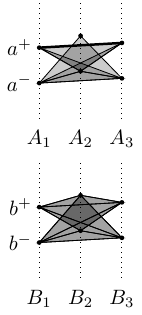}     
    \end{subfigure}\hspace{1 em}
    \begin{subfigure}{0.75 \textwidth}
	\centering
	\includegraphics{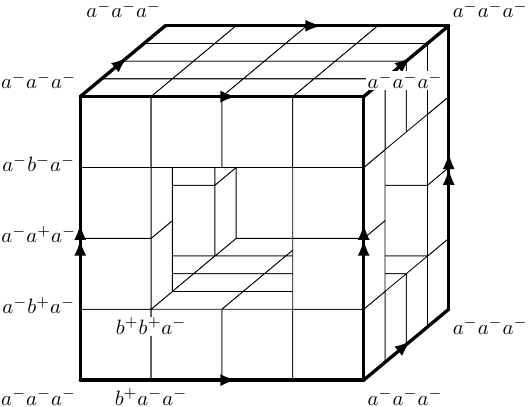}     
    \end{subfigure}
    \caption{The CLCC associated with $\CA(\Gamma)$ where $\Gamma$ is the graph with $3$ vertices and a single edge.}
    \label{fig:Salvetti_cplx_coordinates}
\end{figure}

\begin{lem}
The complex $\BX_{\widehat\Gamma,\BB}\subseteq \BK$ contains the Salvetti complex ${\rm Sal}(\Gamma)\subseteq\BK$ and it deformation retracts onto it.
\end{lem}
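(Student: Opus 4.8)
The plan is to realise both $\mathrm{Sal}(\Gamma)$ and $\BX_{\widehat\Gamma,\BB}$ as explicit subcomplexes of $\BK=\prod_{i=1}^n(A_i\ast B_i)$ and then to retract the latter onto the former by a product of elementary self\=/homotopies of the circle factors. Write $A_i=\{a_i^+,a_i^-\}$ and $B_i=\{b_i^+,b_i^-\}$, so that each factor $A_i\ast B_i$ is the $4$\=/cycle $a_i^--b_i^+-a_i^+-b_i^--a_i^-$, i.e.\ a copy of $\SS^1$ subdivided barycentrically twice, in which the vertex $a_i^-$ corresponds to the basepoint of the $i$\=/th circle of $\TT^n$ (consistent with the recipe for $\widehat\Gamma$, whose empty\=/simplex piece carries an $a^-$ in every coordinate). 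Let $N_i\subseteq A_i\ast B_i$ be the arc spanned by the two edges through $a_i^-$, so that $A_i\ast B_i\setminus N_i$ is the open arc about $a_i^+$. Unwinding the descriptions as subcomplexes of $\BK$: a cube $Q$ of $\BK$ lies in the (twice subdivided) Salvetti complex exactly when the set of $i$ for which the $i$\=/th fibre of $Q$ is not the single vertex $a_i^-$ spans a simplex of $\Gamma$, whereas $Q$ lies in $\BX_{\widehat\Gamma,\BB}$ exactly when the set of $i$ for which the $i$\=/th fibre of $Q$ is not contained in $N_i$ spans a simplex of $\Gamma$; for the second point one only uses that $\BX_{\widehat\Gamma,\BB}$ is a \emph{full} subcomplex of $\BK$ together with the elementary fact that a vertex $(\fka,\fkb)$ of $\BK$ satisfies $\fka\in\widehat\Gamma$ iff $\{i:a_i^+\in\fka\}$ is a simplex of $\Gamma$. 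Passing to points via smallest containing cubes, $x\in\BK$ lies in $\mathrm{Sal}(\Gamma)$ iff $\{i:x_i\neq a_i^-\}$ spans a simplex and in $\BX_{\widehat\Gamma,\BB}$ iff $\{i:x_i\notin N_i\}$ spans a simplex; since $a_i^-\in N_i$ the second index set is contained in the first, whence $\mathrm{Sal}(\Gamma)\subseteq\BX_{\widehat\Gamma,\BB}$ — the first assertion.

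For the deformation retraction, for each $i$ I would fix a homotopy $\phi^i_t\colon A_i\ast B_i\to A_i\ast B_i$, $t\in[0,1]$, with $\phi^i_0=\id$, fixing $a_i^-$ for all $t$, carrying $N_i$ into $N_i$ for all $t$ while shrinking it steadily towards $a_i^-$, so that $\phi^i_1$ collapses all of $N_i$ to $a_i^-$; concretely one may identify $A_i\ast B_i$ with $\RR/\ZZ$ via $a_i^-=0$, $b_i^\pm=\pm\tfrac14$, $a_i^+=\tfrac12$ (so $N_i=[-\tfrac14,\tfrac14]$) and interpolate linearly between the identity and the degree\=/one map that is $0$ on $[-\tfrac14,\tfrac14]$ and affine from $(\tfrac14,0)$ to $(\tfrac34,1)$ on $[\tfrac14,\tfrac34]$. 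Set $\Phi_t\coloneqq\prod_i\phi^i_t\colon\BK\to\BK$. Because each $\phi^i_t$ maps $N_i$ into $N_i$ one gets $\{i:(\Phi_t x)_i\notin N_i\}\subseteq\{i:x_i\notin N_i\}$, so by the characterisation above $\Phi_t$ preserves $\BX_{\widehat\Gamma,\BB}$ for every $t$; likewise $\phi^i_t(a_i^-)=a_i^-$ gives $\{i:(\Phi_t x)_i\neq a_i^-\}\subseteq\{i:x_i\neq a_i^-\}$, so $\Phi_t$ preserves $\mathrm{Sal}(\Gamma)$. Finally, as $\phi^i_1(N_i)=\{a_i^-\}$, for $x\in\BX_{\widehat\Gamma,\BB}$ we have $\{i:(\Phi_1 x)_i\neq a_i^-\}\subseteq\{i:x_i\notin N_i\}$, which is a simplex of $\Gamma$; hence $\Phi_1$ maps $\BX_{\widehat\Gamma,\BB}$ into $\mathrm{Sal}(\Gamma)$.

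Thus $\Phi_t$ is a homotopy inside $\BX_{\widehat\Gamma,\BB}$ from $\id$ to a map $\Phi_1\colon\BX_{\widehat\Gamma,\BB}\to\mathrm{Sal}(\Gamma)$, and it simultaneously restricts to a homotopy of $\mathrm{Sal}(\Gamma)$ into itself; so the subcomplex inclusion $\mathrm{Sal}(\Gamma)\hookrightarrow\BX_{\widehat\Gamma,\BB}$ is a homotopy equivalence, with homotopy inverse $\Phi_1$. Being an inclusion of CW subcomplexes it has the homotopy extension property, so by the standard fact that a cofibration which is a homotopy equivalence is the inclusion of a strong deformation retract (e.g.\ \cite[Cor.~0.20]{hatcher_algebraic_2002}), $\BX_{\widehat\Gamma,\BB}$ deformation retracts onto $\mathrm{Sal}(\Gamma)$. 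The one step that needs genuine care is the cube\=/by\=/cube identification of the two subcomplexes in the first paragraph; it is also worth remarking that one cannot build the retraction by collapsing the collars $N_i$ one coordinate at a time, because for a cube of $\BK$ ``which coordinates are active'' depends on the cube and not only on the point, so the coordinatewise collapses on overlapping cells disagree — moving all coordinates at once and then appealing to the cofibration argument is what circumvents this.
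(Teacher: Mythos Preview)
Your argument is correct. The two key characterisations --- that a point $x\in\BK$ lies in $\mathrm{Sal}(\Gamma)$ iff $\{i:x_i\neq a_i^-\}$ spans a simplex of $\Gamma$, and lies in $\BX_{\widehat\Gamma,\BB}$ iff $\{i:x_i\notin N_i\}$ spans a simplex --- are right (the second because $c_i\not\subseteq N_i$ exactly when $a_i^+\in c_i$, and the ``worst'' vertex of a cube $Q$ then has $A$\=/simplex $\{a_i^+:i\in S\}$ with $S=\{i:c_i\not\subseteq N_i\}$). The product homotopy $\Phi_t$ does what you claim, and the appeal to Hatcher's Corollary~0.20 to upgrade the homotopy equivalence to a strong deformation retraction is legitimate since $(\BX_{\widehat\Gamma,\BB},\mathrm{Sal}(\Gamma))$ is a CW pair.

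The paper takes a different route. It introduces an intermediate open subset $Y\subset\TT^n$ obtained by deleting, for each non\=/simplex $I$, the codimension\=/$|I|$ ``central fibre'' $C_I=\{x:x_i=1/2\text{ for }i\in I\}$ (with $a_i^+$ placed at $1/2$), and then deformation retracts $Y$ onto $\mathrm{Sal}(\Gamma)$ by iterated radial projections, working down through the skeleta of $\TT^n$. Since $\BX_{\widehat\Gamma,\BB}\subseteq Y$ and the radial retraction preserves it, one restricts. Compared with your argument, the paper's retraction fixes $\mathrm{Sal}(\Gamma)$ pointwise at every stage, so no cofibration argument is needed; on the other hand it is less explicit and relies on an inductive ``remove center, push to boundary'' picture. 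Your approach has the virtue of making the membership criteria for both subcomplexes completely transparent and of building the homotopy as a product of one\=/dimensional moves, at the modest cost of invoking a standard abstract fact at the end.
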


\begin{proof}
Identify $\TT^n$ with the quotient of $[0,1]^n$ obtained gluing opposite faces. For every subset $I\subseteq\{1,\ldots,n\}$ let 
\[
C_I\coloneqq \{(x_1,\ldots,x_n) \mid x_i=1/2 \text{ for all }i\in I\}\subseteq \TT^n
\] 
be the fibre over the middle point of the projection of $\TT^n$ onto the $I$ coordinates. Let $Y\subsetneq\TT^n$ be the subset obtained removing $C_I$ from $\TT^n$ every time the set $\{v_i \mid i\in I\}$ does \emph{not} span a simplex of $\Gamma$. Then one can use radial projections to show that $Y$ deformation retracts onto ${\rm Sal}(\Gamma)$. More precisely, if $\Gamma$ contains the full simplex $\{v_1,\ldots,v_n\}$ there is nothing to prove. In the other case, consider the deformation retract of $\TT^n\smallsetminus \{(1/2,\ldots,1/2)\}$ onto its $(n-1)$\=/skeleton by radial projection: every set of $(n-1)$ coordinates identifies an $(n-1)$\=/dimensional torus whose centre is in the image of $Y$ if and only if it also belongs to the Salvetti complex. When the centre is missing perform another radial projection and iterate the process. 

Now, identify $\BK$ with $\TT^n$ by letting $a^-=0,\ b^-=1/4,\ a^+=1/2,\ b^+=3/4$ on every coordinate. Then a vertex $v\in\BK$ is \emph{not} in $\BX_{\widehat\Gamma,\BB}\subseteq \BK$ if and only if the set of indices whose coordinates equal $a^+$ does \emph{not} span a simplex in $\Gamma$. That is, $v\notin \BX_{\widehat\Gamma,\BB}$ if and only if $v\notin Y$ (see Figure~\ref{fig:Salvetti_cplx_coordinates}). It follows easily that the deformation retract of $Y$ onto ${\rm Sal}(\Gamma)$ restricts to a deformation retract of $\BX_{\widehat\Gamma,\BB}$.   
\end{proof}

\begin{cor}
Every right angled Artin group can be obtained as the fundamental group of a CLCC.
\end{cor}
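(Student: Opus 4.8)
The plan is to read the corollary off from the preceding lemma, using the classical identification $\CA(\Gamma)\cong\pi_1\bigparen{\mathrm{Sal}(\Gamma)}$. Given a flag simplicial complex $\Gamma$ on vertices $v_1,\dots,v_n$, I would take the $n$\=/coloured complexes $\widehat\Gamma\subseteq\BA$ and $\BB=\Ast_{i=1}^n B_i$ constructed in the discussion above (with each $A_i,B_i$ of size $2$), observe that $(\widehat\Gamma,\BB)$ is a legitimate object of $(\simp_n)^2$, and conclude that $\BX_{\widehat\Gamma,\BB}\subseteq\BK$ is, by the very definition of Section~\ref{sec:complex_X_Gamma}, the cube complex with coupled links associated to this pair. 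The lemma then provides a deformation retraction of $\BX_{\widehat\Gamma,\BB}$ onto $\mathrm{Sal}(\Gamma)$, so homotopy invariance of the fundamental group gives $\pi_1\bigparen{\BX_{\widehat\Gamma,\BB}}\cong\pi_1\bigparen{\mathrm{Sal}(\Gamma)}\cong\CA(\Gamma)$, which is exactly the assertion.

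The first thing I would make explicit is that $\BX_{\widehat\Gamma,\BB}$ is genuinely an instance of our construction: $\widehat\Gamma$ was defined as a subcomplex of $\BA=\Ast_{i=1}^n A_i$ and $\BB=\Ast_{i=1}^n B_i$, both visibly $n$\=/coloured simplicial complexes, so nothing more is needed for $\BX_{\widehat\Gamma,\BB}$ to be a CLCC (flagness is not required by the bare statement). Still, it is worth recording that $\widehat\Gamma$ is in fact flag, so that by Corollary~\ref{cor:flag.implies.cat0.CLCC} the complex we produce is non\=/positively curved and hence an actual classifying space for $\CA(\Gamma)$. This flagness check is the only computational point. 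A clique of $\widehat\Gamma$ contains at most one vertex of each $A_i$ (since $a_i^+$ and $a_i^-$ are not adjacent), so it has the shape $\braces{a_i^{\epsilon_i}\mid i\in I}$; setting $J=\braces{i\in I\mid\epsilon_i=+}$, pairwise adjacency of the positively signed vertices forces $\braces{v_i\mid i\in J}$ to be a clique of $\Gamma^{(1)}$, hence to span a simplex $\tau$ of $\Gamma$ because $\Gamma$ is flag, and then the $n$\=/simplex of $\widehat\Gamma$ attached to $\tau$ contains the entire clique. (All the mixed edges $a_i^\pm\sim a_j^-$ with $i\neq j$ are automatically present, since the empty simplex and the singletons of $\Gamma$ contribute the relevant top\=/dimensional simplices of $\widehat\Gamma$.)

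I do not expect a real obstacle here: the geometric content — identifying $\BX_{\widehat\Gamma,\BB}$ up to deformation retraction with the Salvetti complex — has already been carried out in the lemma, and the classical fact $\CA(\Gamma)\cong\pi_1\bigparen{\mathrm{Sal}(\Gamma)}$ is standard. The only place where care is needed is in confirming that $(\widehat\Gamma,\BB)$ is a valid (and, if one wants the stronger conclusion, flag) input to the functor $\BX$, which is the short verification above.
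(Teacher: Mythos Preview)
Your proposal is correct and matches the paper's approach exactly: the corollary is stated without proof in the paper, being an immediate consequence of the preceding lemma together with $\CA(\Gamma)\cong\pi_1\bigparen{\mathrm{Sal}(\Gamma)}$. Your additional verification that $\widehat\Gamma$ is flag (and hence that $\BX_{\widehat\Gamma,\BB}$ is non\=/positively curved) is a welcome bonus the paper leaves implicit, and your argument for it is sound.
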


\subsection{(Commutators of) Right Angled Coxeter Groups}\label{ssec:examples.RACGS}
Recall now that the \emph{right angled Coxeter group} associated with $\Gamma$ is the quotient 
\[
\CC(\Gamma)\coloneqq\CA(\Gamma)/\aangles{{v_i^2 \mid i=1,\ldots ,n}}.
\] 
Its commutator subgroup $[\CC(\Gamma),\CC(\Gamma)]$ has finite index in $\CC(\Gamma)$ and it is proved in \cite{Dro03} that it is the fundamental group of the cube complex $K(\Gamma)\subseteq [0,1]^n$ containing a $k$\=/dimensional cube $c$ if and only if the indices of the $k$ coordinates that vary within $c$ span a simplex of $\Gamma$. The link of each vertex of $K(\Gamma)$ is equal to $\Gamma$, hence $K(\Gamma)$ is non\=/positively curved.

Let again $\BA\coloneqq \Ast_{i=1}^n\{a^-,a^+\}$ but this time let $\BB=\Ast_{i=1}^n\{b\}$ be a single simplex. Then $\BK$ is equal to the cubical barycentric subdivision of $[0,1]^n$ and therefore $K(\Gamma)$ can be identified with a subset of $\BK$. Note that $\Gamma$ is naturally a subcomplex of $\BB$. Then it is simple to prove the following:

\begin{lem}\label{lem:commutators.RACG.is.CLCC}
The complex $\BX_{\BA,\Gamma}$ is equal to the barycentric subdivision of $K(\Gamma)$.
\end{lem}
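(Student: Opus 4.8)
The plan is to make explicit the coordinate identification of $\BK$ with a subdivided cube, exactly as in the proof of the previous lemma: on the $i$-th factor $A_i\ast B_i$ put $a^-=0$, $b=1/2$ and $a^+=1$, so that $\BK$ becomes the cube complex obtained from $[0,1]^n$ by subdividing at the barycentres of all of its faces (this is the assertion that $\BK$ is the barycentric subdivision of $[0,1]^n$: a product of intervals subdivided at their midpoints is the corresponding subdivision of the product). Its vertices are the points of $\{0,1/2,1\}^n$ and its top cubes are the $2^n$ cubes $\prod_{i=1}^n[\epsilon_i,\epsilon_i+1/2]$ with $\epsilon_i\in\{0,1/2\}$. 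Under this identification the barycentric subdivision of $K(\Gamma)$ --- obtained by subdividing each cube of the subcomplex $K(\Gamma)\subseteq[0,1]^n$ at its face barycentres --- is itself a subcomplex of $\BK$. Since $\BX_{\BA,\Gamma}$ is, by definition, the \emph{full} subcomplex of $\BK$ on the set of vertices $(\fka,\fkb)$ for which $\fkb$ is a simplex of $\Gamma$ (the condition $\fka\in\BA$ being automatic, as $\BA$ is a full join), the lemma reduces to two statements: (1) the barycentric subdivision of $K(\Gamma)$ and $\BX_{\BA,\Gamma}$ have the same vertex set; (2) a cube of $\BK$ lies in the barycentric subdivision of $K(\Gamma)$ if and only if all of its vertices lie in that common vertex set.

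For (1): a point $v\in\{0,1/2,1\}^n$ is a vertex of $\BX_{\BA,\Gamma}$ exactly when its set of $B$-coordinates $\fkb=\{\,i:v_i=1/2\,\}$ spans a simplex of $\Gamma$. It is a vertex of the barycentric subdivision of $K(\Gamma)$ exactly when $v\in K(\Gamma)$, and, since $K(\Gamma)$ is a subcomplex of $[0,1]^n$, this holds precisely when the smallest face of $[0,1]^n$ containing $v$ is a cube of $K(\Gamma)$. That smallest face has varying-coordinate set exactly $\{\,i:v_i=1/2\,\}$, so by the definition of $K(\Gamma)$ this is again the condition that $\{\,i:v_i=1/2\,\}$ span a simplex of $\Gamma$. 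The two vertex sets therefore coincide.

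For (2): write a cube of $\BK$ as $C=\prod_i c_i$ with each $c_i$ a cell of the interval $[0,1]$ subdivided at $1/2$, and let $\widehat C$ be the smallest face of $[0,1]^n$ containing $C$; then $\widehat C$ has varying-coordinate set $\widehat I=\{\,i:c_i\neq\{0\}\text{ and }c_i\neq\{1\}\,\}$, and $C$ is a cell of the barycentric subdivision of $\widehat C$. If every vertex of $C$ lies in $\vertex(\BX_{\BA,\Gamma})$, then so does the central vertex $v^*$ of $C$ defined by $v^*_i=1/2$ for $i\in\widehat I$ and $v^*_j$ equal to the common (necessarily $\{0,1\}$-valued) coordinate of $C$ for $j\notin\widehat I$; since $\{\,i:v^*_i=1/2\,\}=\widehat I$, part (1) forces $\widehat I$ to span a simplex of $\Gamma$, so $\widehat C\in K(\Gamma)$ and hence $C$ lies in its barycentric subdivision. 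Conversely, if $C$ lies in the barycentric subdivision of $K(\Gamma)$ then every vertex of $C$ lies in some cube of $K(\Gamma)$, hence --- using that $\Gamma$, being a simplicial complex, is closed under passing to faces, together with the argument of (1) --- in $\vertex(\BX_{\BA,\Gamma})$. Combined with the fullness of $\BX_{\BA,\Gamma}$, statements (1) and (2) show that $\BX_{\BA,\Gamma}$ is the barycentric subdivision of $K(\Gamma)$.

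The one step requiring genuine care, rather than a routine unwinding of definitions, is the bookkeeping in (2): correctly identifying the unique face $\widehat C$ of $[0,1]^n$ inside which a given small cube $C$ of $\BK$ sits, and verifying that $C$ is then a cell of the barycentric subdivision of $\widehat C$. Once this is in place, the central vertex $v^*$ carries the whole argument.
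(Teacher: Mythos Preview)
Your proof is correct and follows essentially the same approach as the paper's. Both arguments hinge on the observation that the vertex of $\BK$ with the maximal set of $B$\=/coordinates (the barycentre of the ambient big cube, your $v^*$) determines membership: its $B$\=/coordinate set is exactly the set of varying coordinates of the smallest face $\widehat C$ of $[0,1]^n$, so it lies in $\BX_{\BA,\Gamma}$ precisely when $\widehat C$ belongs to $K(\Gamma)$. The paper phrases this top\=/down (for each cube $c$ of $[0,1]^n$, its subdivision lies in $\BX_{\BA,\Gamma}$ iff its barycentre does) while you phrase it bottom\=/up (each small cube of $\BK$ lies in the subdivision of $K(\Gamma)$ iff all its vertices do, reducing to $v^*$), but the content is the same. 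Your version is more explicit about why the barycentric subdivision of $K(\Gamma)$ is itself a \emph{full} subcomplex of $\BK$, which the paper leaves implicit; one small terminological quibble is that your ``central vertex'' $v^*$ is not the barycentre of $C$ but rather a corner of it\textemdash the name is slightly misleading, though the definition and its use are correct.
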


\begin{proof}
It is enough to show that $\BX_{\BA,\Gamma}$ and $K(\Gamma)$ coincide as subsets of $\BK$. Note that (the cubical barycentric subdivision of) a $k$\=/cube $c\subseteq [0,1]^n$ is contained in $\BX_{\BA,\Gamma}$ if and only if $\BX_{\BA,\Gamma}$ contains the barycenter of $c$. The lemma follows because the set of $B$\=/coordinates of the barycenter of $c$ coincides with the set of coordinates that vary in $c$ (see Figure~\ref{fig:Coxeter_cplx_coordinates}).
\end{proof}

\begin{figure}
    \centering
    \begin{subfigure}{0.3 \textwidth}
	\centering
	\includegraphics{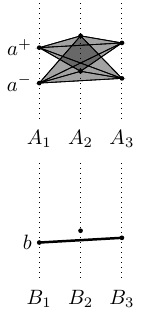}     
    \end{subfigure}
    \begin{subfigure}{0.5 \textwidth}
	\centering
	\includegraphics{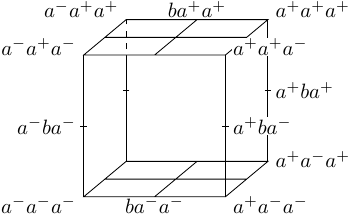}     
    \end{subfigure}
    \caption{The CLCC associated with the commutators of $\CC(\Gamma)$ where $\Gamma$ is the graph with $3$ vertices and a single edge.}
    \label{fig:Coxeter_cplx_coordinates}
\end{figure}

\begin{cor}
The commutator subgroup of every right angled Coxeter group can be realised as the fundamental group of a CLCC.
\end{cor}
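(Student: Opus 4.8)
The plan is to deduce this immediately from Lemma~\ref{lem:commutators.RACG.is.CLCC} together with the description of $[\CC(\Gamma),\CC(\Gamma)]$ recalled from \cite{Dro03}. Fix an arbitrary flag simplicial complex $\Gamma$ with vertices $v_1,\dots,v_n$, so that $\CC(\Gamma)$ is an arbitrary right angled Coxeter group. Form the $n$\=/coloured complexes $\BA\coloneqq\Ast_{i=1}^n\{a^-,a^+\}$ and $\BB\coloneqq\Ast_{i=1}^n\{b\}$ as in Section~\ref{ssec:examples.RACGS}, and regard $\Gamma$ as the full subcomplex of $\BB$ spanned by its vertices. Then $\BX_{\BA,\Gamma}$ is by construction a cube complex with coupled links; note moreover that $\BA$, being a join of discrete sets, is flag and $\Gamma$ is flag by hypothesis, so Corollary~\ref{cor:flag.implies.cat0.CLCC} even guarantees that $\BX_{\BA,\Gamma}$ is non\=/positively curved.

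Next I would apply Lemma~\ref{lem:commutators.RACG.is.CLCC}, which identifies $\BX_{\BA,\Gamma}$ with the barycentric subdivision of the cube complex $K(\Gamma)\subseteq[0,1]^n$. Since a complex and its barycentric subdivision are homeomorphic, this yields $\pi_1\bigparen{\BX_{\BA,\Gamma}}\cong\pi_1\bigparen{K(\Gamma)}$. Finally, the result of Droms recalled at the start of Section~\ref{ssec:examples.RACGS} states that $\pi_1\bigparen{K(\Gamma)}$ is isomorphic to the commutator subgroup $[\CC(\Gamma),\CC(\Gamma)]$. Composing the two isomorphisms exhibits $[\CC(\Gamma),\CC(\Gamma)]$ as the fundamental group of the CLCC $\BX_{\BA,\Gamma}$, as required; since $\Gamma$ was arbitrary this proves the statement for every right angled Coxeter group.

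There is no genuine difficulty here: the corollary is a formal consequence of Lemma~\ref{lem:commutators.RACG.is.CLCC} and of the cited property of $K(\Gamma)$. The only point worth a moment's attention is to read Lemma~\ref{lem:commutators.RACG.is.CLCC} as an identification of cell complexes (which is how it is proved, by comparing the cubes of $[0,1]^n$ with their barycentres in $\BK$) and not merely of underlying sets, so that the passage to fundamental groups is unproblematic.
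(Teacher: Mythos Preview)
Your proposal is correct and follows exactly the intended approach: the paper states this corollary without proof, as it is meant to be an immediate consequence of Lemma~\ref{lem:commutators.RACG.is.CLCC} together with the fact from \cite{Dro03} that $\pi_1\bigparen{K(\Gamma)}\cong[\CC(\Gamma),\CC(\Gamma)]$. Your explicit unpacking of these two steps (and the remark that barycentric subdivision is a homeomorphism) is precisely what the reader is expected to supply.
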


\section{Hyperbolicity criteria for coupled link cube complexes}\label{sec:hyperbolic}

\subsection{Hyperplanes in cube complexes}
In this section we recall some basic facts about hyperplanes in CAT(0) cube complexes. 
 Recall from Subsection \ref{ssec:hyperplanes in CLCC} that a hyperplane $\hpl$ of $X$ is an equivalence class of parallel edges of $X$. If $X$ is CAT(0), the associated map $H\to X$ is an embedding whose image is the CAT(0) cube complex spanned by the midpoints of the edges in $\hpl$ (\emph{i.e.} the smallest convex set containing them). We can---and will---identify a hyperplane $\hpl$ with this subset. Moreover, every hyperplane $\hpl$ disconnects $X$ into two \emph{halfspaces} $\hsp$ and $\hsp^*$. 
After taking a cubical subdivision we have that hyperplanes, their carriers and halfspaces are all convex subcomplexes of $X$. As such they satisfy the Helly property: 

\begin{theorem}[{\cite[Theorem 2.2]{roller}}]
	Let $C_1, \dots, C_n$ be convex subcomplexes of a CAT(0) cube complex $X$. Suppose that $C_i\cap C_j\neq\emptyset$ for all $i, j$. 
	Then $C_1\cap\dots\cap C_n\neq \emptyset$. 
\end{theorem}

The following consequence will be useful in what follows. 

\begin{cor}\label{cor:carriers}
	Let $\hpl_1, \dots, \hpl_n$ be hyperplanes. Suppose that $N(\hpl_i)\cap N(\hpl_j)\neq \emptyset$ for all $i, j$. Then $N(\hpl_1)\cap \dots\cap N(\hpl_n)$ contains a vertex of $X$. 
\end{cor}

\begin{de}	
	We say that two distinct hyperplanes $\hpl, \hpl'$ are {\em transverse} if they intersect. We say that a hyperplane $\hpl$ {\em separates} two subsets $Y, Z$ of $X$, if $Y \subseteq \hsp$ and $Z\subseteq \hsp^*$.
\end{de}

\begin{lem}\label{lem:transverse hyperplane intersect}
 If $\hpl_1$ and $\hpl_2$ are transverse, then $N(\hpl_1)\cap N(\hpl_2)$ is the union of all the cubes intersecting $\hpl_1\cap\hpl_2$.
\end{lem}
\begin{proof}
 Let $N(\hpl_1\cap\hpl_2)$ be the union of the cubes interesting $\hpl_1\cap\hpl_2$. It is clear that $\emptyset\neq N(\hpl_1\cap\hpl_2)\subseteq N(\hpl_1)\cap N(\hpl_2)$. 
 Since $N(\hpl_1)\cap N(\hpl_2)$ is connected, if the other containment did not hold there would be a vertex $w\in N(\hpl_1\cap \hpl_2)$ and an edge $(w,w')\in N(\hpl_1)\cap N(\hpl_2)$ with $w'\notin N(\hpl_1\cap\hpl_2)$. This edge $(w,w')$ can be used to construct an empty triangle in the link of $w$, against flag condition.
\end{proof}

\begin{de}
	A {\em grid of hyperplanes} is the data of two families of hyperplanes $V=\{\hplv_1,\dots,\hplv_p\}$ and $H=\{\hpl_1,\dots,\hpl_q\}$, 
	such that any $\hplv_i$ is transverse to any $\hpl_j$, and any $\hplv_i$ (resp. $\hpl_j$) separates $\hplv_{i-1}$ and $\hplv_{i+1}$ (resp. $\hpl_{j-1}$ and $\hpl_{j+1}$).
	Such a grid is {\em $\delta$-thin} if $\min(p, q)\leq \delta$. 
	We say that a grid is {\em maximal} if for each $i$ there is no hyperplane separating $\hpl_i, \hpl_{i+1}$ and no hyperplane separating $\hplv_i, \hplv_{i+1}$. 
	
	We will refer to the hyperplanes in $V$ as {\em vertical} and the hyperplanes in $H$ as {\em horizontal}.
\end{de}

\begin{lem}\label{lem:maxgrid}
	Given a grid of hyperplanes there is a maximal one containing it. 
\end{lem}
\begin{proof}
	Suppose that there is a hyperplane $\hplt$ separating $\hpl_i, \hpl_{i+1}$ for some $i$. 
	Since $\hplv_j$ is transverse to $\hpl_i$ and $\hpl_{i+1}$ we see that $\hplv_j$ is transverse to $\hplt$. 
	Thus we can replace $H$ with the collection $\{\hpl_1, \dots, \hpl_i, \hplt, \hpl_{i+1},\dots,\hpl_q\}$. 
	Repeating this process we will eventually obtain a maximal grid:
	the process terminates since there are only finitely many hyperplanes separating any two hyperplanes. 
\end{proof}

\subsection{Hyperbolicity criteria}

Hyperbolicity in general CAT(0) spaces can be obtained by using the flat plane theorem \cite{bridson_existence_1995}. However, in cubical complexes there are also various other methods involving combinatorics of hyperplanes. 
We will make use of the following criterion from \cite{genevois2} which is proved in \cite{genevois}.

\begin{thm}[{\cite[Theorem 3.5]{genevois2},\cite[Theorem 3.3]{genevois}}]\label{thm:thingrids}
	Let $\tilde{X}$ be a finite dimensional CAT(0) cube complex. 
	Then $\tilde{X}$ is hyperbolic if and only if grids of hyperplanes are uniformly thin. 
\end{thm}

\begin{de}
	A {\em square} in a simplicial complex $\Gamma$ is any subgraph isomorphic to the boundary of a square (\emph{i.e.} a cycle of length $4$). A square in $\Gamma$ is \emph{flat} if it is a full subcomplex (\emph{i.e.} in $\Gamma$ there are no edges joining two diagonally opposite vertices of the square).
\end{de}

Flat squares in simplicial complexes are the only obstruction to hyperbolicity in right-angled Coxeter groups: 
\begin{theorem}[\cite{Mou88}]\label{thm:hyp RACGs}
	The right-angled Coxeter group $\CC\paren{\Gamma}$ is hyperbolic if and only if $\Gamma$ contains no flat squares. 
\end{theorem}

We wish to produce a similar criterion for non\=/positively curved CLCCs.

\

Let $\xcplx$ be a coupled link cube complex on $n$ colours and let $\tildexcplx$ denote its universal cover. If $\xcplx$ is not connected, $\tildexcplx$ is defined as the disjoint union of the universal covers of  the connected components of $\xcplx$.
The covering map induces a labelling on the vertices of $\tildexcplx$, where a vertex $w$ has label $(\fka,\fkb)\in\xcplx$ if it belongs to the preimage of $(\fka,\fkb)$. We will still say that $\fka$ and $\fkb$ are the coordinate simplices of $w$, and that the indices corresponding to vertices in $\fka$ (resp. $\fkb$) are the $A$\=/coordinates (resp. $B$\=/coordinates) of $w$. 
We still have that 
\[
 \lk(w,\tildexcplx)=\lk(\fka,\Gamma_A)\ast\lk(\fkb,\Gamma_B).
\]
Similarly, we say that an edge of $\xcplx$ is in the $k$\=/th coordinate if it connects two vertices whose labellings differ only on the $k$\=/coordinate.

Now let $\Gamma_A$ and $\Gamma_B$ be flag, so that $\tildexcplx$ is CAT(0). The same explaination of Subsection~\ref{ssec:hyperplanes in CLCC}, shows that with any hyperplane $\hpl\subsetneq \tildexcplx$ is naturally associated a colour $k\in\{1,\ldots,n\}$. Namely, $\hpl$ has colour $k$ if and only if it is an equivalence class of edges in the $k$\=/th coordinate. 
Of course, the hyperplane $\hpl$ cuts its carrier $N(\hpl)$ in half. Moreover, all the vertices on one half of $N(\hpl)$ will have the same $k$\=/th coordinate, say $a_k\in A_k$, and all the vertices on the other half will share a complementary $k$\=/th coordinate, say $b_k\in B_k$.

\begin{rmk}
 It is in fact possible to use this colouring of the hyperplanes to show that $\tildexcplx$ embeds into a product of $n$ trees (see also \cite{ChHa13}).
\end{rmk}

Recall, a simplicial complex is {\em 5-large} if it is flag and it contains no flat squares.%
\footnote{%
Equivalently, all geodesic closed cylces must have legth $5$ or more.
}
In the context of coupled link cube complexes the following weakening of 5\=/largeness will be sufficient to prove hyperbolicity. 

\begin{de}\label{def:pairwise}
	We say that a pair of $n$\=/coloured flag simplicial complexes $\Gamma_A, \Gamma_B$ are {\em pairwise 5-large} if whenever there is a flat square $(v_1, v_2, v_3, v_4)$ in one of the complexes, where $v_j$ has colour $k_j$, then there exists a $j \in\{ 1, \dots, 4\}$ such that the other complex contains no flat squares with two adjacent vertices of colours $k_j, k_{j+1}$. 
\end{de}

\begin{remark}\label{rem:5largeispairwise5large}
	If $\Gamma_A$ and $\Gamma_B$ are flag and one of them is 5-large, then the pair $\Gamma_A, \Gamma_B$ is pairwise 5-large. However, we will see many examples where the converse is false. 
\end{remark}

This condition allows us to extend one half of Theorem \ref{thm:hyp RACGs} to the context of coupled link cube complexes.

\begin{thm}\label{thm:hypclcc}
	If $\Gamma_A, \Gamma_B$ are pairwise 5-large, then $\pi_1(\xcplx, x_0)$ is hyperbolic for every base point $x_0\in\xcplx$.
\end{thm}
\begin{proof}
	Let $\tildexcplx$ be the universal cover of $\xcplx$. We need to show that every connected component of $\xcplx$ is hyperbolic. To do this, it is enough to show that in $\tildexcplx$ there are no maximal grids of hyperplanes of width $\geq 5$. Hyperbolicity then follows from Theorem~\ref{thm:thingrids} and Lemma~\ref{lem:maxgrid}. 
	
	Suppose by contradiction that we are given a maximal grid of hyperplanes of width $5$. 
	By maximality, given four hyperplanes $\hplv_i, \hplv_{i+1}, \hpl_j, \hpl_{j+1}$ there is no hyperplane separating $\hplv_i$ from $\hplv_{i+1}$. Therefore, the carriers $N(\hplv_i)$ and $N(\hplv_{i+1})$ must intersect. The same holds for $\hpl_j$ and $\hpl_{j+1}$. It then follows from Corollary \ref{cor:carriers} that the intersection of all four carriers contains some (possibly more than one) vertices of $\tildexcplx$.  
	We will choose one such vertex and call it $w_{i, j}$ see Figure \ref{fig:grid}.

\begin{figure}
    \centering
	\includegraphics{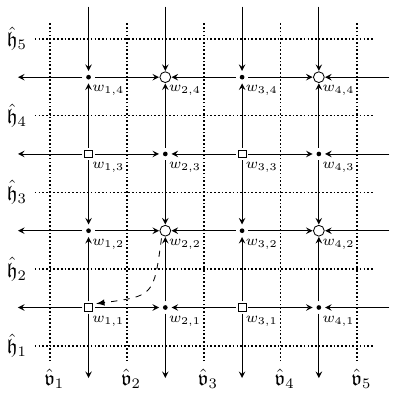}   
    \caption{A grid of hyperplanes of width $5$ and the associated points $w_{i,j}$. The arrows point to the side of the hyperplane having one fewer $A$\=/coordinate. 
    A vertex $w_{ij}$ with label $(\fka,\fkb)$ is marked with a white circle if the flat square is contained in $\lk(\fka,\Gamma_A)$. It is marked with a rectangles if the flat square is in $\lk(\fkb,\Gamma_B)$.}
    \label{fig:grid}
\end{figure}
	
	Each hyperplane $\hplv_i, \hplv_{i+1}, \hpl_j, \hpl_{j+1}$ uniquely determines a vertex in the link of $w_{i, j}$. We denote these vertices by $v_i, v_{i+1}, h_j,  h_{j+1}$ respectively. Since $\hplv_i$ and $\hpl_j$ are transverse, it follows from Lemma~\ref{lem:transverse hyperplane intersect} that $v_i$ and $h_j$ are joined by an edge. Similarly, we deduce that $(v_i, h_j, v_{i+1}, h_{j+1})$ is a square in the link of $w_{i,j}$.	
	Since $\hplv_i$ and $\hplv_{i+1}$ do not intersect, the corresponding vertices in the link are not linked by an edge. The same goes for $\hpl_j, \hpl_{j+1}$. We thus conclude that $(v_i, h_j, v_{i+1}, h_{j+1})$ is a flat square in the link of $w_{i,j}$. We shall denote this flat square $\square_{i,j}$. 
	
  Let $(\fka,\fkb)$ be the labelling $w$ and identify $\lk(w_{i, j},\tildexcplx)$ with the join $\lk(\fka,\Gamma_A)\ast\lk(\fkb,\Gamma_B)$. If three vertices of $\square_{i,j}$ were to belong to $\lk(\fka,\Gamma_A)$ while the fourth was in $\lk(\fkb,\Gamma_B)$, then $(v_i, h_j, v_{i+1}, h_{j+1})$ would not span a flat square. 
	Thus, there are only three possibilities: either all 4 vertices of $\square_{i,j}$ are in $\Gamma_A$, all 4 vertices $\square_{i,j}$ are in  $\Gamma_B$ or there are two vertices of $\square_{i,j}$ in $\Gamma_A$ and two vertices in $\Gamma_B$. Moreover, in the last case the two vertices in $\Gamma_A$ cannot be adjacent in $\square_{i,j}$.

Let $k^v_{i}$ and $k^h_j$ be the colours of the hyperplanes $\hplv_i$ and $\hpl_j$ respectively. If the $k^v_{i+1}$\=/th coordinate of $w_{i,j}$ is an $A$\=/coordinate, then it will necessarily be a $B$\=/coordinate for $w_{i+1,j}$. Moreover, $w_{i,j}$ and $w_{i+1,j}$ must have the same $k^h_j$\=/th and $k^h_{j+1}$\=/th coordinate because they lie on the same side of the carriers of $\hpl_j$ and $\hpl_{j+1}$. It follows that $\square_{i+1,j}$ will have exactly two more vertices in $\Gamma_A$ than $\square_{i,j}$. As a consequence, we see that exactly one among $\square_{2,2},\square_{2,3},\square_{3,2}$ and $\square_{3,3}$ is fully contained in $\Gamma_A$ (and exactly one of them is fully contained in $\Gamma_B$).

For concreteness, assume that $\square_{2,2}$ is completely contained in $\Gamma_A$ (the other three cases are similar). The colours of $\square_{i,j}$ are $k^v_{2},k^h_2,k^v_{3},k^h_3$. By pairwise $5$\=/largeness, there should be two adjacent colours such that there are no flat squares in $\Gamma_B$ having those same colours on adjacent vertices. 
Again, assume for concreteness that these two colours are $k^v_{2},k^h_2$ (the other cases being analogous). 
It follows from the previous argument that $\square_{2,1}$ has two vertices in $\Gamma_B$ and two vertices in $\Gamma_A$, and that $\square_{1,1}$ is fully contained in $\Gamma_B$. Moreover, $\square_{i,j}$ has $k^v_{2},k^h_2$ as colours of adjacent vertices, contradicting pairwise $5$\=/largeness (Figure~\ref{fig:grid}).
\end{proof}

The criterion provided by Theorem~\ref{thm:hypclcc} is rather flexible, but it is not sharp (see \emph{e.g.} Example~\ref{eg:hyperbolic non 5large}). In fact, it appears to be very complicated to provide an ``if and only if'' characterisation of hyperbolicity for coupled link cube complexes. Rather, it seems to be more cost\=/effective to prove various partial criteria. We shall now give one more such criterion.

\begin{de}
 We say that a square in an $n$\=/coloured simplicial complex $\Gamma$ is \emph{bicoloured} if it has vertices of only two colours $k_1$, $k_2$ (necessarily alternating). We say that $\Gamma$ has \emph{only bicoloured flat squares} if every flat square is bicoloured.
 
 A pair of bicoloured squares $c_A$, $c_B$ in two $n$\=/coloured simplicial complexes $\Gamma_A$, $\Gamma_B$ are \emph{complementary} if they both have colours $k_1$, $k_2$ and there exist simplices $\fka\subsetneq \Gamma_A$ and $\fkb\subsetneq \Gamma_B$ such that $c_A\subseteq\lk(\fka,\Gamma_A)$, $c_B\subseteq(\fkb,\Gamma_B)$ and the simplex $\fka^{k_1,k_2}$ is complementary to $\fkb$.
\end{de}

In other words, two squares are complementary if they belong to the link of two (possibly empty) simplices $\fka$, $\fkb$ that have no vertex of coordinate $k_1$, $k_2$, and so that for every other $k\in\{1,\ldots,n\}$ exactly one of $\fka$ and $\fkb$ has a vertex on the $k$\=/th coordinate.

The CLCC associated with the pair $\fka\ast c_A$ and $c_B\ast\fkb$ is equal to the double  cubical barycentric subdivision of the torus $\TT^2$. Moreover, if $c_A$ and $c_B$ are flat squares then $\fka\ast c_A\subseteq \Gamma_A$ and $c_B\ast\fkb\subseteq\Gamma_B$ are full subcomplexes. It follows from functoriality (Lemma~\ref{lem:local isometry}) that if $\Gamma_A$, $\Gamma_B$ contain complementary bicoloured flat squares the complex $\xcplx$ contains a locally isometrically embedded copy of the torus. If $\Gamma_A$ and $\Gamma_B$ are flag, it follows that the universal cover $\tildexcplx$ has a flat plane as a subcomplex. In particular, it is not hyperbolic. The next result shows that, under the assumption that there are only bicolour flat squares, the converse is also true.

\begin{thm}\label{thm:bicolour criterion}
 Let $\Gamma_A$ and $\Gamma_B$ be $n$\=/coloured flag simplicial complexes with only bicoloured flat squares. 
If $\pi_1(\xcplx,x_0)$ is not hyperbolic, then there exists a pair of complementary flat squares. In particular, the connected component of $x_0$ in $\xcplx$ must contain as a subcomplex a locally isometric embedding of a torus.
\end{thm}
\begin{proof}
 Assume that $\pi_1(\xcplx,x_0)$ is not hyperbolic, so that the universal cover of that connected component is not hyperbolic. As in the proof of Theorem~\ref{thm:hypclcc}, choose a maximal grid of hyperplanes of width $3$ and vertices $w_{i,j}$ belonging to the intersections of the carriers of $\hplv_i, \hplv_{i+1}, \hpl_j, \hpl_{j+1}$. These hyperplanes determine flat squares $\square_{i,j}$ in the links of $w_{i,j}$. It follows from the only bicoloured condition that all the vertical hyperplanes and all the horizontal hyperplanes have the same colours, say $k^v$ and $k^h$. 
 
 As before, exactly one of $\square_{1,1},\ \square_{1,2},\ \square_{2,1},\ \square_{2,2}$ is fully contained in $\Gamma_B$. Without loss of generality, we can assume it is $\square_{1,1}$. 
 To prove the theorem it will be enough to show that we can choose $w_{2,2}$ to be the vertex obtained from $w_{1,1}$ by crossing only $\hplv_2$ and $\hpl_2$. In fact, if this is the case we let $c_B$ and $c_A$ be the bicoloured flat squares determined by $w_{1,1}$ and $w_{2,2}$. We then see that they are complementary by letting 
 $\fkb$ (resp. $\fka$) be the simplex of $B$\=/coordinates (resp. $A$\=/coordinates) of $w_{1,1}$  (resp. $w_{2,2}$).
 
 Let $u_{1,2}$ be the vertex obtained from $w_{1,1}$ by crossing $\hpl_2$. Since $u_{1,2}$ belongs to $Y\coloneqq N(\hpl_2)\cap N(\hplv_1)\cap N(\hplv_2)$, it is enough to show that it also belongs to $N(\hpl_3)$ to conclude that we can let $w_{1,2}=u_{1,2}$. Since $Y$ is connected, we can choose a shortest path $\gamma=(\gamma_0,\dots,\gamma_l)$ in $Y^{(1)}$ connecting $u_{1,2}$ to $Y\cap N(\hpl_3)$. We aim to show that $l=0$, \emph{i.e.} $\gamma$ is in fact the constant path. 
 Note that all the vertices in $\gamma$ must have the same $k^v$\=/th and $k^h$\=/th coordinate. 
 
 Assume toward a contradiction, that $l\geq 1$, and let $\hpl'$ be the hyperplane separating $\gamma_{l-1}$ and $\gamma_{l}$. Since the path is contained $N(\hplv_1)\cap N(\hplv_2)$, the hyperplane $\hpl'$ is transverse to $\hplv_1$ and $\hplv_2$. It follows that $\hplv_1, \hpl_3,\hplv_3$ and $\hpl'$ define a square in the link of $\gamma_l$. Since $\gamma_{l-1}$ and $\gamma_{l}$ have the same $k^h$\=/th coordinate,  the hyperplane $\hpl'$ must be of a colour different from $k^h$. We then see that the square in not bicoloured, and hence $\hpl'$ and $\hpl_3$ must be transverse. It follows that $\gamma_{l-1}\in N(\hpl_3)$, contradicting the assumption (Figure~\ref{fig:walking_across_cubes}).

\begin{figure}
    \centering
	\includegraphics{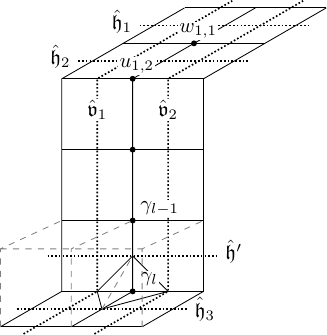}   
    \caption{Since the square around $\gamma_l$ cannot be flat, the dashed cubes must belong to $\tildexcplx$. It follows that $\gamma_{l-1}$ belongs to the carrier of $\hpl_3$ as well.}
    \label{fig:walking_across_cubes}
\end{figure}
 
 The same argument shows that we can let $w_{2,2}$ be the vertex $u_{2,2}$ obtained from $u_{1,2}$ by crossing $\hplv_2$, thus proving the theorem.
\end{proof}

\subsection{Hyperbolic examples}

We can now detail many examples of coupled link cube complexes with hyperbolic fundamental group. An immediate corollary to Theorem \ref{thm:hypclcc} and Remark \ref{rem:5largeispairwise5large} is the following:

\begin{cor}\label{cor:one.5-large.hyperbolic.CLCC}
	If $\Gamma_A$ is $5$\=/large, then each connected component of $\tildexcplx$ is hyperbolic.
\end{cor}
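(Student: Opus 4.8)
The plan is to argue by contradiction using Bridson's Flat Plane Theorem (Theorem~\ref{thm:Bridson.flat.plane}), following the scheme of the proof of Theorem~\ref{thm:n.directions.5.large.hyperbolic} but feeding Proposition~\ref{prop:4cycles.in.xcplx} into the argument in place of Proposition~\ref{prop:many.4cycles}. Since $\xcplx$ is a finite non-positively curved cube complex, $\tildexcplx$ is a proper cocompact CAT(0) cube complex with $n$ directions; if it is not hyperbolic, it contains an isometrically embedded flat plane $E$. I would then fix a point $x\in E$ and a fly map $f\colon\tildexcplx\to\RR^n$, normalised as in Section~\ref{sec:hyperbolic.examples} so that $A$-coordinates of vertices are sent to even integers and $B$-coordinates to odd ones. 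By Proposition~\ref{prop:fly.maps.l1.isometry}, $f$ restricts to an $\ell^1$-embedding of $N(E)$; since $E$ is convex in $\tildexcplx$ and the $\ell^1$-metric dominates the Euclidean one, $f$ is expanding on $E$, so $f(E)$ is an \emph{unbounded} subset of $\RR^n$. By Lemma~\ref{linelem}, $f(E)$ is moreover contained in a product $\prod_{i=1}^n L_i$ of intervals, and each coordinate function $z_j\colon E\to\RR$ obtained by composing $f$ with the projection onto the $j$-th factor of $\RR^n$ is $1$-Lipschitz.

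By Proposition~\ref{prop:4cycles.in.xcplx} it is enough to produce a \emph{single} point $y_0\in E$ such that, writing $Q(\fka_0,\fkb_0)$ for the smallest cube of $\tildexcplx$ containing $y_0$, the intersection $\lk(\fka_0,\Gamma_A)\cap\overline{L}^{y_0}$ is non-empty: any such vertex $v$ together with any $w\in W^{y_0}_v$ yields, by that proposition, an empty square in $\Gamma_A$, contradicting $5$-largeness. So the entire content is the claim that such a point $y_0$ exists.

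I would prove this claim by contradiction. Suppose that for every $y\in E$ the link $\overline{L}^y$ misses the $\lk(\fka,\Gamma_A)$-factor of $\lk\bigparen{y,\tildexcplx}=\lk(\fka,\Gamma_A)\ast\lk(\fkb,\Gamma_B)\ast_d\SS^0$. Unwinding the definitions of $\overline{L}^y$ and $N(E)$, and recalling that a vertex of the $\lk(\fka,\Gamma_A)$-factor corresponds to changing a coordinate of the smallest cube from a (constant) $B$-value to an $A$-value — i.e., under $f$, to moving some coordinate $z_j$ off an odd integer towards an even one — this hypothesis translates into the statement: for every $p\in E$ and every $j$, if $z_j(p)$ is an odd integer then $z_j$ is constant on a neighbourhood of $p$ in $E$. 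Fix $j$ and an odd integer $c$; then $\{p\in E: z_j(p)=c\}$ is closed (as $z_j$ is continuous) and, by the displayed local constancy, also open, so by connectedness of $E$ it is empty or all of $E$. Hence each $z_j$ is either globally an odd constant, or its image avoids $2\ZZ+1$; in the latter case the connected set $z_j(E)$ lies in a single component $(2k-1,2k+1)$ of $\RR\setminus(2\ZZ+1)$. Either way $z_j(E)$ is bounded, so $f(E)$ is bounded — contradicting the first paragraph. This establishes the claim, hence the corollary.

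The step I expect to be the main obstacle is the translation in the third paragraph, namely checking rigorously that ``$\overline{L}^y$ contains no vertex of $\lk(\fka,\Gamma_A)$'' is equivalent to the local-constancy statement about $z_j$. Concretely, one must track which cubes lie in the minimal cubical neighbourhood $N(E)$: if $z_j$ is non-constant near a point $p$ with $z_j(p)$ an odd integer, one picks a nearby $p'\in E$ with $z_j(p')\neq z_j(p)$ and argues that the smallest cube $C_{p'}$ containing $p'$ properly contains the ``direction-$j$ extension'' of the smallest cube $C_p$ at $p$ (using that $p'$ lies in the open star of $C_p$ and that $j$ varies in $C_{p'}$), so that the corresponding vertex of $\lk(\fka_0,\Gamma_A)$ does sit in $\lk(p,N(E))=L^p\subseteq\overline{L}^p$. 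Once this bookkeeping is in place, the rest is either a direct appeal to Proposition~\ref{prop:4cycles.in.xcplx} or the same soft topology already used in Section~\ref{sec:hyperbolic.complexes.with.n.directions}.
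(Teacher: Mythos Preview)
Your proposal is correct and follows the same route as the paper's proof. The paper's argument is a two-line sketch: assume a flat $E$ exists, observe that ``there must be some point $y\in E$ whose link has vertices in the $A$-coordinates'', and then invoke Proposition~\ref{prop:4cycles.in.xcplx} to produce an empty square in $\Gamma_A$. You carry out exactly this plan, but you actually \emph{prove} the observation the paper leaves implicit, via the clopen argument on the level sets $\{z_j=c\}$ for odd $c$ together with the unboundedness of $f(E)$. Your identification of the translation step (from ``no vertex of $\lk(\fka,\Gamma_A)$ in $\overline L^y$'' to ``$z_j$ is locally constant near $y$ whenever $z_j(y)$ is odd'') as the only substantive point is accurate, and the bookkeeping you outline for it is correct: for $y'$ sufficiently close to $y$ the smallest cube through $y'$ contains the smallest cube $c$ through $y$, and if $z_j(y')\neq z_j(y)$ then that larger cube varies in the $j$-th coordinate, so its face $Q(\fka_0\cup\{a_j\},\fkb_0)$ lies in $N(E)$ and contributes the required vertex to $L^y$.
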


\begin{rmk}
	Since we realised the (commutator subgroup) of a right angled Coxeter group $\CC(\Gamma)$ as the fundamental group of $\BX_{\Gamma,\SS^{n-1}}$, this corollary gives us yet another proof that $\CC(\Gamma)$ is hyperbolic if and only if $\Gamma$ is $5$\=/large.
\end{rmk}

From \cite[Proposition 2.13]{przytycki_flag-no-square_2009} it follows that there exist $5$\=/large triangulations of the $3$\=/sphere. Specifically, \cite[Lemma 2.1]{przytycki_flag-no-square_2009} shows that the boundary of the 600\=/cell is such an example. We obtain the following: 

\begin{thm}\label{thm:4manifolds}
	Let $\Gamma_B$ be a $5$\=/large PL triangulation of $\SS^3$ with an arbitrary $n$\=/colouring having at least one vertex for each colour and let $\Gamma_A=\Ast_{i=1}^n \SS^0$. Then $\xcplx$ is a $4$\=/dimensional closed connected PL manifold with hyperbolic fundamental group.
\end{thm}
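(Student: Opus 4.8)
The plan is to verify the four asserted properties of $\xcplx$ one at a time, drawing on the general machinery already developed: dimension from Lemma~\ref{lem:dimension.xcplx.smartly.paired}, the manifold property from Fact~\ref{fact:cc.with.spheres.as.links.are.mflds} and Lemma~\ref{lem:links.in.xcplx}, connectedness from Lemma~\ref{lem:connect.criterion}, and hyperbolicity from Corollary~\ref{cor:one.5-large.hyperbolic.CLCC}. First I would record that $\Gamma_A=\Ast_{i=1}^n\SS^0$ and $\Gamma_B$ (a triangulated $\SS^3$ with every colour used) are smartly paired: a maximal simplex of $\Gamma_A$ has a vertex in every coordinate, so the empty simplex of $\Gamma_B$ is complementary to it; conversely a maximal ($3$-dimensional) simplex $\fkb$ of $\Gamma_B$ uses $4$ of the $n$ colours, and picking one vertex of $\Gamma_A$ in each of the remaining $n-4$ coordinates gives a complementary simplex $\fka$. (For this we need $n\ge 4$, which is forced by $\dim\Gamma_A+\dim\Gamma_B\ge n-2$, i.e.\ $(n-1)+3\ge n-2$, trivially true, but more relevantly by the requirement that $\Gamma_B$ be a triangulated $\SS^3$ that is $n$-coloured with all colours present, so $n\ge 4$.) Then Lemma~\ref{lem:dimension.xcplx.smartly.paired} gives $\dim\bigparen{\xcplx}=(n-1)+3+2-n=4$, and $\xcplx$ has pure dimension $4$.

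Next, the manifold claim. By Lemma~\ref{lem:links.in.xcplx}, for a vertex $v=(\fka,\fkb)\in\xcplx$ we have $\lk(v,\xcplx)=\lk(\fka,\Gamma_A)\ast\lk(\fkb,\Gamma_B)$. Since $\fka$ is a simplex of $\Ast_{i=1}^n\SS^0$, say with vertices in coordinates $i\in S$, the link $\lk(\fka,\Gamma_A)$ is $\Ast_{i\notin S}\SS^0\cong\SS^{|S^c|-1}$; and since $\Gamma_B$ is a triangulated $3$-sphere, $\lk(\fkb,\Gamma_B)$ is a triangulated sphere of dimension $3-\dim(\fkb)-1=2-\dim(\fkb)$ by Fact~\ref{fact:links.in.mflds.are.spheres} — here I must observe that $\fkb$ is never a maximal ($3$-)simplex of $\Gamma_B$, because then $\fka$ would be empty yet $\fka,\fkb$ complementary forces $\fka$ to have a vertex in every coordinate not used by $\fkb$, a nonempty set as $n\ge 5$ whenever $\Gamma_B$ genuinely uses only $4$ colours — actually I should just note that a vertex of $\xcplx$ with $\fkb$ a $3$-simplex would need $\fka$ to cover the remaining $n-4\ge 1$ coordinates, fine, so $\lk(\fkb,\Gamma_B)$ is a sphere of dimension $\ge 0$ when $n>4$ and one must separately allow $\fkb$ a $3$-simplex only if $n=4$, in which case $\fka=\emptyset$, $\lk(\fka,\Gamma_A)=\emptyset$, and $\lk(v,\xcplx)=\lk(\fkb,\Gamma_B)=\SS^{-1}$?? — so I would simply handle this by noting that $\Gamma_A$ always contributes at least the join factor needed: the safest route is to invoke the observation in Subsection~\ref{ssec:examples.manifolds} that since $\dim(\Gamma_A)=n-1$ is \emph{not} forced to exceed $n-1$ and $\Gamma_B$ is a genuine sphere, every link is a join of spheres hence a sphere of dimension $(|S^c|-1)+(2-\dim\fkb)+1 = 4-1=3$ (using complementarity $|S|+\dim(\fkb)+1 = $ number of $B$-coordinates, and a short count). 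Thus $\lk(v,\xcplx)\cong\SS^3$ for every vertex, so by Fact~\ref{fact:cc.with.spheres.as.links.are.mflds} $\xcplx$ is a closed PL $4$-manifold; orientability follows from the remark after Fact~\ref{fact:links.in.mflds.are.spheres}.

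For connectedness, I would apply Lemma~\ref{lem:connect.criterion} to the auxiliary graph $\CG_A$. Since $\Gamma_A=\Ast_{i=1}^n\SS^0$, its unique maximal simplices are the $2^n$ top simplices $\bar\fka$ with one vertex in each coordinate; a vertex of $\CG_A$ is then a pair $(\bar\fka,\emptyset)$, so $\CG_A$ has $2^n$ vertices indexed by choices of sign in each coordinate. Two such, $(\bar\fka,\emptyset)$ and $(\bar\fka',\emptyset)$, are joined in $\CG_A$ iff there is a simplex $\fkb''\subseteq\Gamma_B$ complementary to $\bar\fka\cap\bar\fka'$, i.e.\ $\fkb''$ must have a vertex in exactly the coordinates where $\bar\fka$ and $\bar\fka'$ disagree. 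If $\bar\fka,\bar\fka'$ differ in a single coordinate $i$, we need a vertex of $\Gamma_B$ in coordinate $i$, which exists since $\Gamma_B$ uses every colour; hence any two vertices of $\CG_A$ differing in one coordinate are adjacent, so $\CG_A$ contains a spanning subgraph isomorphic to the $1$-skeleton of the $n$-cube and is connected. Therefore $\xcplx$ is connected by Lemma~\ref{lem:connect.criterion}. Finally, hyperbolicity: $\Gamma_A=\Ast_{i=1}^n\SS^0$ contains no empty square at all (it is a complete multipartite graph's flag completion, in which any $4$-cycle lies in a simplex, so every $4$-cycle has a diagonal), hence $\Gamma_A$ is $5$-large, and Corollary~\ref{cor:one.5-large.hyperbolic.CLCC} gives that $\tildexcplx$ is hyperbolic, so $\pi_1\bigparen{\xcplx}$ is hyperbolic. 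The only genuinely delicate point is the bookkeeping in the link computation — keeping the coordinate-counting straight so that every link comes out as exactly $\SS^3$ and the degenerate cases ($\fka$ or $\fkb$ empty) are correctly accounted for; everything else is a direct citation of the results above.
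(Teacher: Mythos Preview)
Your overall strategy matches the paper's: manifold via links (Lemma~\ref{lem:links.in.xcplx} and Fact~\ref{fact:cc.with.spheres.as.links.are.mflds}), connectedness via Lemma~\ref{lem:connect.criterion}, hyperbolicity via Corollary~\ref{cor:one.5-large.hyperbolic.CLCC}. The dimension count and the connectedness argument are fine, and the link computation, while meandering, lands on the correct count $(|S^c|-1)+(2-\dim\fkb)+1=3$.

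There is, however, a genuine error in the hyperbolicity step. You claim that $\Gamma_A=\Ast_{i=1}^n\SS^0$ is $5$\=/large because ``any $4$\=/cycle lies in a simplex''. This is false for $n\ge 2$: take two distinct coordinates $i\neq j$ and consider the $4$\=/cycle $a_i^+,a_j^+,a_i^-,a_j^-$. Consecutive vertices lie in different coordinates, hence are adjacent, while the diagonals $(a_i^+,a_i^-)$ and $(a_j^+,a_j^-)$ are pairs of vertices in the same coordinate and are therefore \emph{not} edges. So $\Gamma_A$ contains empty squares and is not $5$\=/large. (More generally, any complete multipartite graph with at least two parts of size $\ge 2$ has empty squares.)

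The fix is immediate: the hypothesis hands you that $\Gamma_B$ is $5$\=/large, and Corollary~\ref{cor:one.5-large.hyperbolic.CLCC} applies symmetrically in $A$ and $B$ (the construction and Proposition~\ref{prop:4cycles.in.xcplx} treat the two complexes on equal footing). Invoke the corollary with $\Gamma_B$ playing the role of the $5$\=/large complex, exactly as the paper does.
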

\begin{proof}
	We have already shown in Proposition \ref{prop:manifoldsifspheres} that $\xcplx$ is a closed PL manifold. Since $\Gamma_A$ is the complete $n$\=/coloured complex $\SS^0\ast\cdots\ast\SS^0$ (which is strongly connected) and $\Gamma_B$ has at least one vertex per colour, it follows easily from Lemma~\ref{lem:connect.criterion} that $\xcplx$ is connected.\footnote{%
	If there was some colour such that $\Gamma_B$ has no vertices of that colour, $\xcplx$ would be a disjoint union of isomorphic PL manifolds. This can be proved using Proposition~\ref{prop:components are CLCC} to show that all the connected components are CLCC defined by the same simplicial complexes.
	}
	Hyperbolicity of the fundamental group follows from Corollary~\ref{cor:one.5-large.hyperbolic.CLCC}.
\end{proof}

We can in fact prove that in low dimensions there are no topological obstructions to a pair of simplicial complexes having a pairwise 5-large colouring. We will prove it by considering barycentric subdivisions.

\begin{de}
	Let $\Gamma$ be an $n$-coloured simplicial complex. Let $C_1, \dots, C_n$ be the colours of this complex. Let $\square = v_1, \dots, v_4$ be a square in $\Gamma$. If $v_1\in C_i, v_2\in C_j, v_3\in C_k$ and $v_4\in C_l$, we say that $\square$ is of {\em type $[i, j, k, l]$}. The type of a square is defined up to cyclic permutations and reflections.
\end{de}

\begin{lem}\label{lem:subbicolour}
	Let $\Gamma$ be a $2$\=/dimensional simplicial complex and let $\Gamma'$ be its barycentric subdivision. Let $\Gamma'$ be $3$\=/coloured with colours $V, E, F$ where $V (E, F$, respectively) corresponds to vertices of $\Gamma'$ that are the barycentres of vertices (edges, faces respectively). 
	Then any flat square is of type $[V,F,V,F]$. 
\end{lem}
\noindent\textit{Proof. }
 Besides squares of type $[V,F,V,F]$, there are five more possible cycles of length $4$. We examine them in turn to show that they are not flat squares (in what follows $v_i$ denotes a vertex in $V$, $e_i$ in $E$ and $f_i$ in $F$):
	\begin{itemize}
		\item The square has vertices $v_1, e_1, v_2, e_2$. Since $e_1$ is connected to both $v_1$ and $v_2$, it must be the barycenter of the edge between these two vertices. The same holds for $e_2$ and so $e_1 = e_2$. 
		\item The square has vertices $f_1, e_1, f_2, e_2$. In this case $e_1$ and $e_2$ correspond to edges of the triangles $f_1$ and $f_2$. Since any triangle is determined by two of its edges, we see that $f_1 = f_2$. 
		\item The square has vertices $v_1,e,v_2,f$. Then $v_1$ and $v_2$ belong to both $f$ and $e$. Since $\Gamma$ is simplicial, $e$ is uniquely determined by $v_1$ and $v_2$ and therefore it is contained in $f$. It follows that $(e,f)$ is an edge in $\Gamma'$.
		\item The square has vertices $e_1,v,e_2,f$. Then $e_1$ and $e_2$ contain $v$ and are contained in $f$. Since $\Gamma$ is simplicial, both $v$ and $f$ are uniquely determined by $e_1$ and $e_2$ and therefore $v$ is contained in $f$.
		\item The square has vertices $f_1,v,f_2,e$. Then $f_1$ and $f_2$ contain $v$ and $e$. Since $\Gamma$ is simplicial, the intersection of $f_1$ and $f_2$ is $e$ and therefore $v$ is contained in $e$. \qed
	\end{itemize}

\begin{thm}\label{thm:barycentric.subdivision.hyperbolic}
	Let $\Gamma$ and $\Lambda$ be $2$\=/dimensional simplicial complexes. Let $\Gamma'$ be $3$\=/coloured as in Lemma~\ref{lem:subbicolour} and let $\Lambda'$ be $3$\=/coloured by taking a permutation of the colours given in Lemma~\ref{lem:subbicolour} that does not fix $E$. Then each connected component of $\widetilde{\BX}_{\Gamma',\Lambda'}$ is hyperbolic.
	
	Moreover, if $H_2(\Gamma,\ZZ_2)$ and $H_2(\Lambda,\ZZ_2)$ are not trivial, then $H_3(\xcplx,\ZZ_2)$ is also non\=/trivial.
\end{thm}
\begin{proof}
	By Lemma \ref{lem:subbicolour}, in $\Gamma'$ and $\Lambda'$ there is only one type of flat square: $[V,F,V,F]$ for the former and $[E,X,E,X]$ for the latter, where $X$ is either $V$ or $F$ depending on the permutation chosen for the colours of $\Lambda'$. In either case, these complexes are pairwise 5-large. Thus we obtain hyperbolicity from Theorem \ref{thm:hypclcc}. 
	
	The statement about homology follows immediately from Theorem~\ref{thm:chains.to.chains,cycles.to.cycles,amen} because $3$\=/coloured $2$\=/cycles are necessarily smartly paired and therefore define a $3$\=/cycle in $\xcplx$. This $3$\=/cycle cannot be a boundary because $\xcplx$ is a $3$\=/dimensional complex.
\end{proof}

We can proceed in a similar way for 3-dimensional simplicial complexes. Here there are more flat squares. 

\begin{lem}\label{lem:squaresin3dim}
	Let $\Gamma$ be a $3$\=/dimensional simplicial complex and let $\Gamma'$ be equipped with the $4$\=/colouring by colours $\{0,1,2,3\}$, where vertices of colour $i$ corresponds the barycentres of $i$-cells. 
	Then any flat square has one of the following types: 
	\begin{itemize}
		\item $[0, 2, 0, 2]$,
		\item $[0, 2, 0, 3]$,
		\item $[0, 3, 0, 3]$, 
		\item $[0, 3, 1, 3]$, 
		\item $[1, 3, 1, 3]$.
	\end{itemize}
\end{lem}
\begin{proof}
 It follows from Lemma \ref{lem:subbicolour} that any flat square contained in the $2$\=/skeleton of $\Gamma'$ must be of type $[0,2,0,2]$. It only remains to study squares $(v_1,v_2,v_3,v_4)$ of type $[x,y,z,3]$ with $x,y,z\in\{0,1,2,3\}$. If $y<z$, the three vertices $v_2,v_3,v_4$ correspond to a flag of simplices of increasing dimension in $\Gamma$. Hence the square cannot be flat, as there is an edge between $v_2$ and $v_4$. The same argument shows that $y$ cannot be smaller than $x$. We can thus assume $y>\max\{x,z\}$. Taking a reflection if necessary, we can further assume that $x\leq z$.
 
 For $(v_1,v_2,v_3,v_4)$ to be flat, it is necessary that the simplex of $\Gamma$ associated with $v_2$ is not contained in the simplex of $\Gamma$ associated with $v_4$. As a consequence, the intersection of the simplices of $\Gamma$ associated with $v_2$ and $v_4$ is a simplex of dimension at most $y-1$. In order for $v_1$ and $v_3$ to be connected to both $v_2$ and $v_4$, it is necessary that the simplices of $\Gamma$ associated with $v_1$ and $v_3$ are contained in the intersection of the simplices of $\Gamma$ associated with $v_2$ and $v_4$. However, we now see that if $z=y-1$  then the simplex of $\Gamma$ associated with $v_3$ contains the simplex of $\Gamma$ associated with $v_1$ and hence the square is not flat. We deduce that both $x$ and $z$ must be at most $y-2$.
 
 The squares satisfying these conditions are precisely those listed in the statement of the lemma. It is not hard to verify that they can all give rise to flat squares.
\end{proof}

Now that we understand all the possible flat squares in barycentric subdivisions of $3$\=/dimensional complexes we can give more examples of hyperbolic coupled link cube complexes. 

\begin{thm}\label{thm:3dimbarycentricsubdivision}
	Let $\Gamma$ and $\Lambda$ be $3$\=/dimensional simplicial complexes. Then there is a choice of $4$\=/colouring that makes $\Gamma', \Lambda'$ pairwise 5-large (and hence each component of $\tildexcplx$ is hyperbolic). 
	
	Moreover, if $H_3(\Gamma,\ZZ_2)$ and $H_3(\Lambda,\ZZ_2)$ are not trivial, then $H_4(\xcplx,\ZZ_2)$ is also non\=/trivial.
\end{thm}
\begin{proof}
	Let $\Gamma'$ be $4$\=/coloured by $\{0,1,2,3\}$ as in Lemma \ref{lem:squaresin3dim}. Let $\Lambda$ be $4$\=/coloured by permuting the colours from Lemma~\ref{lem:squaresin3dim} as follows:
	\[
	\begin{tikzcd}[row sep=2 ex]
	 \text{vertices} \arrow[d] &
	 \text{edges} \arrow[d] &
	 \text{faces} \arrow[d] &
	 \text{tetrahedra} \arrow[d] \\
	 1 & 3 & 0  & 2.
	\end{tikzcd}
    \]	
	
	It follows from Lemma \ref{lem:squaresin3dim} that $\Gamma'$ has flat squares of types $[0, 2, 0, 2]$, $[0, 2, 0, 3]$, $[0, 3, 0, 3]$, $[0, 3, 1, 3]$ and $[1, 3, 1, 3]$. 
	
	It also follows that $\Lambda'$ has flat squares of type $[1, 0, 1, 0]$, $[1, 0, 1, 2]$, $[1, 2, 1, 2]$, $[1, 2, 3, 2]$ and $[3, 2, 3, 2]$.
	
	Thus we can see that with this colouring this pair of complexes is pairwise 5-large. Thus we obtain hyperbolicity from Theorem \ref{thm:hypclcc}. 
	
	The statement about homology follows immediately from Theorem~\ref{thm:chains.to.chains,cycles.to.cycles,amen} because $4$\=/coloured $3$\=/cycles are necessarily smartly paired and therefore define a $4$\=/cycle in $\xcplx$. This $4$\=/cycle cannot be a boundary because $\xcplx$ is a $4$\=/dimensional complex.
\end{proof}

We have so far seen no topological obstructions to a pair of complexes admitting a colouring that makes them pairwise 5-large. It would be of interest to know if there is an example of a pair of topological spaces such that every triangulation and every colouring is not pairwise 5-large. In particular we raise the following question:
\begin{qu}\label{qu:hypmanifold}
	Is there a bound on the dimension of coupled link cube complexes that are PL manifolds with hyperbolic fundamental group?
\end{qu}

In the case of hyperbolic right-angled Coxeter groups, the question above has an affirmative answer \cite[Theorem 2]{januszkiewicz_hyperbolic_2003}. Namely, if the Davis complex is a PL manifold and the right-angled Coxeter group is hyperbolic, then it has dimension $\leq 4.$
In fact, \cite{januszkiewicz_hyperbolic_2003} shows that if a RACG is a virtual Poincaré duality group then it has dimension $\leq 4$. 
Thus, we also ask the more general question: 
\begin{qu}\label{qu:poincareduality}
	Is there a bound on the dimension for which $\pi_1(\xcplx)$ can be a hyperbolic Poincaré duality group? 
\end{qu}

\begin{rmk}
 Taking $n$\=/coloured barycentric subdivision of complexes of dimension $n$ cannot yield hyperbolic CLCCs for any $n\geq 5$. One can check that the number of bicoloured flat squares increases too quickly and hence any such colouring will give rise to complementary bicoloured flat squares.
\end{rmk}

We conclude our list of examples by giving an instance where Theorem~\ref{thm:bicolour criterion} applies while Theorem~\ref{thm:hypclcc} does not.

\begin{exmp}\label{eg:hyperbolic non 5large}
  Let both $\Gamma_A$ and $\Gamma_B$ be equal to the $4$\=/coloured torus as depicted in Figure~\ref{fig:hexagonal_torus}. Then $\xcplx$ has hyperbolic fundamental group by Theorem~\ref{thm:bicolour criterion} (one can also check that it is a cubulated surface with $640$ vertices and genus $63$). However, $\Gamma_A$ and $\Gamma_B$ are not pairwise $5$\=/large.
\end{exmp}

\begin{figure}
    \centering
	\includegraphics{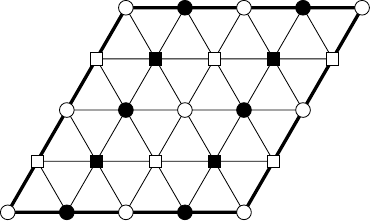}   
    \caption{A $4$\=/coloured triangulation of the torus (the side edges and the horizontal edges are pairwise identified). Every combination of two colours appears as a bicoloured flat square.}
    \label{fig:hexagonal_torus}
\end{figure}

\bibliographystyle{plain}
\bibliography{bibshort,morebib}

\end{document}